
\documentclass[11pt]{article}

\usepackage[english]{babel}
\usepackage{amsmath,amsfonts,amssymb,amsthm}
\usepackage{latexsym}
\usepackage{makeidx}

\usepackage{vmargin}
\setmarginsrb{11.6mm}{10mm}{11.6mm}{20mm}
            {8mm}{8mm}{8mm}{8mm}

\numberwithin{equation}{section}

\numberwithin{table}{section}

\usepackage{caption}

\newtheorem{deff}{Definition}[section]
\newtheorem{lemma}[deff]{Lemma}
\newtheorem{theorem}[deff]{Theorem}
\newtheorem{corollary}[deff]{Corollary}

\newtheorem{proposition}[deff]{Proposition}
\newtheorem{fact}[deff]{Fact}
\newtheorem{em-example}[deff]{Example}
\newtheorem{em-def}[deff]{Definition}        
\newtheorem{em-remark}[deff]{Remark}         
\newtheorem{em-question}[deff]{Question}
\newtheorem{question}[deff]{Question}

\newtheorem{problem}[deff]{Problem}

\newenvironment{example}{\begin{em-example} \em }{ \end{em-example}}
\newenvironment{remark}{\begin{em-remark} \em }{\end{em-remark}}

\newcommand{\R}{\mathbb R}
\newcommand{\N}{\mathbb N}

\newcommand{\Q}{\mathbb Q}
\newcommand{\J}{\mathbb J}
\newcommand{\Z}{\mathbb Z}
\newcommand{\T}{\mathbb{T}}

\newcommand{\Gs}{\mathfrak G}

\newcommand{\CC}{\mathcal C}
\newcommand{\cont}{\mathfrak c}

\def\ent{\mathrm{ent}}
\def\aent{\mathrm{ent}^\star}

\def\Hom{\mathrm{Hom}}
\def\Ab{\mathbf{Ab}}
\def\TopAb{\mathbf{TopAb}}

\input xy
\xyoption{all}

\title{Functorial topologies and finite-index subgroups of abelian groups}

\author{ Dikran Dikranjan
\\{\footnotesize {\tt  dikran.dikranjan@uniud.it}} 
 \\{\footnotesize Dipartimento di Matematica e Informatica,}
\\{\footnotesize Universit\`{a} di Udine,}
\\{\footnotesize Via delle Scienze, 206 - 33100 Udine, Italy}
 \and Anna Giordano Bruno
\\{\footnotesize {\tt  anna.giordanobruno@uniud.it}} 
 \\{\footnotesize Dipartimento di Matematica e Informatica,}
\\{\footnotesize Universit\`{a} di Udine,}
\\{\footnotesize Via delle Scienze, 206 - 33100 Udine, Italy}
 }

\date{Dedicated to the seventieth birthday of Eraldo Giuli}

\begin{document}

\maketitle

\abstract{In the general context of functorial topologies, we prove that in the lattice of all group topologies on an abelian group, the infimum between the Bohr topology and the natural topology is the profinite topology. The profinite topology and its connection to other  functorial topologies is the main objective of the paper. We are particularly interested
in the poset $\mathcal C(G)$ of all finite-index subgroups of an abelian group $G$, since it is a local base for the profinite topology of $G$. 
We describe various features of the poset $\mathcal C(G)$ (its cardinality, its cofinality, etc.) and we characterize the abelian groups $G$ for which $\mathcal C(G)\setminus \{G\}$ is cofinal in the poset of all subgroups of $G$ ordered by inclusion. Finally, for pairs of functorial topologies $\mathcal T,\mathcal S$ we define the equalizer $\mathcal E(\mathcal T,\mathcal S)$, which permits to describe relevant classes of abelian groups in terms of functorial topologies.
}

\section{Introduction}

The concept of functorial topology was introduced by Charles \cite{Cha2}. He proposed a method for constructing such topologies, which was generalized by Fuchs \cite[Vol. I, p. 33]{F}. Later on, functorial topologies were subject of study by many authors, among them Boyer and Mader \cite{BM}, Mader \cite{Ma1} and Mader and Mines \cite{MM1}.

\smallskip
Let $U$ be the forgetful functor $U:\TopAb \to \Ab$, where $\Ab$ is the category of all abelian groups and their morphisms and $\TopAb$ is the category of all topological abelian groups and their morphisms.

\begin{deff}
A functorial topology is a functor $\mathcal T:\Ab\to\TopAb$ such that $U\mathcal T=1_\Ab$.
\end{deff}

Equivalently, following \cite{F}, a functorial topology is a class $\mathcal T=\{\mathcal T_G:G\in\Ab\}$, where $(G,\mathcal T_G)$ is a topological group for every $G\in\Ab$, and every homomorphism $G\to H$ in $\Ab$ is continuous $(G,\mathcal T_G)\to(H,\mathcal T_H)$.  So, a functorial topology is a functor $\mathcal T:\Ab\to\TopAb$ such that $\mathcal T(G)=(G,\mathcal T_G)$ for any $G\in\Ab$, where $\mathcal T_G$ denotes the topology on $G$, and $\mathcal T(\phi)=\phi$ for any morphism $\phi$ in $\Ab$ \cite{BM}. 

By \cite[Theorem 2.2]{BM}, any functorial topology defined on a full subcategory of $\Ab$ extends to $\Ab$, so that there is no need to introduce functorial topologies for full subcategories of $\Ab$. 

A functorial topology $\mathcal T$ is \emph{linear} if $\mathcal T_G$ is linear for every $G\in\Ab$ (recall that a group topology is \emph{linear} if it has a local base consisting of open subgroups); moreover, $\mathcal T$ is \emph{ideal}  if $\mathcal T$ maps surjective homomorphisms to open (continuous) homomorphisms \cite{BM}, and it is \emph{hereditary} if $\mathcal T_H=\mathcal T_G\restriction_H$ for every $G\in\Ab$ and every subgroup $H$ of $G$.

\smallskip
Functorial topologies exist in abundance. The discrete topology $\delta$ and the indiscrete topology $\iota$ are functorial topologies that are both hereditary and ideal.  In this paper we mainly focus our attention on the following three functorial topologies: for an abelian group $G$,
\begin{itemize}
\item the \emph{profinite topology} $\gamma_G$ has all finite-index subgroups as a base of the neighborhoods of $0$;
\item the \emph{natural topology} (sometimes called also \emph{$\Z$-adic topology}) $\nu_G$ has the countable family of subgroups $\{mG:m\in \N_+\}$ as a base of the neighborhoods of $0$;
\item the \emph{Bohr topology} $\mathcal P_G$ is the initial topology of all homomorphisms $G \to \T=\R/\Z$, namely, the characters of $G$, where $\T$ is equipped with the compact quotient topology of $\R/\Z$.
\end{itemize}

We also consider the $p$-adic topology $\nu^p$, which can be viewed as a natural local version of $\nu$ and analogous local versions $\gamma^p$ of $\gamma$ and $\mathcal P^p$ of $\mathcal P$ (see Example \ref{examples}).

Unlike the Bohr topology, the profinite and the natural topology are linear topologies. On the other hand, the natural topology and the profinite topology are ideal but not hereditary (see Example \ref{QZ}), while  the Bohr topology is both ideal and hereditary (see Lemma \ref{PG}(b)). 

A topological abelian group $(G,\tau)$ is \emph{totally bounded} if for any non-empty open subset $U$ of $(G,\tau)$ there exists a finite subset $F$ of $G$ such that $U+F=G$. If $\tau$ is totally bounded and Hausdorff, it is said to be \emph{precompact}.
The completion $\widetilde G$ of any precompact abelian group $G$ is compact \cite{Weil}, so the precompact abelian groups are precisely the subgroups of the compact abelian groups. 
In this paper, we use the fact that the Bohr topology on an abelian group $G$ is the maximal totally bounded group topology on $G$; this is a deep fact, deducible from Peter-Weyl's theorem for compact abelian groups. See \cite{dLD}, \cite{DW}, \cite{kunen} and \cite{KR} for the remarkable properties of this topology. 

Our choice to concentrate mainly on the profinite, the natural and the Bohr topology, with a special emphasis on the connections between the Bohr topology and the profinite topology, is motivated by the fact that  the functorial aspect of the Bohr topology has not been sufficiently brought to light neither in topology nor in algebra.  

\medskip
In Section \ref{profinite} we investigate the properties of the profinite topology, mainly its relationship with the natural and the Bohr topology. 
In particular, it is known that $\gamma_G\leq \inf\{\nu_G,\mathcal P_G\}$ for any abelian group $G$ (for a proof see Lemma \ref{P,nu>gamma}). The first of the main theorems of this paper, which is Theorem \ref{inf=gamma}, shows that actually equality holds, that is, in the lattice of all group topologies of an abelian group, the profinite topology is the infimum of the natural topology and the Bohr topology. Since the natural topology is metrizable, one may be left with the misperception that the profinite topology and the Bohr topology are very close due to the equality $\gamma_G= \inf\{\nu_G,\mathcal P_G\}$. We see in Theorem \ref{str_div} that the behavior of the subgroups shows a substantial difference between these topologies. Namely, while every subgroup is closed in the Bohr topology, the abelian groups $G$ in which every subgroup is $\gamma_G$-closed  form a quite small class (this class consists precisely of the Pontryagin duals of the so called ``exotic tori" introduced in \cite{DP}). 

\bigskip
In \cite{DGS}, the adjoint algebraic entropy for endomorphisms $\phi$ of abelian groups $G$ was introduced making use of the family $\CC(G)$ of all finite-index subgroups of $G$ (see Section \ref{C(G)} for the precise definition). Indeed, the adjoint algebraic entropy of endomorphisms $\phi$ of abelian groups $G$ measures to what extent $\phi$ moves the finite-index subgroups of $G$. So, in the context of the adjoint algebraic entropy, it is worth studying the poset of finite-index subgroups of abelian groups $G$, calculating various invariants of it (as size, cofinality, etc.).  In this direction, \cite[Theorem 3.3]{DGS} (see Theorem \ref{narrow:char} below) characterizes the abelian groups $G$ with countable $\CC(G)$; abelian groups with this property are called \emph{narrow}. In particular, every endomorphisms $\phi$ of a narrow abelian group $G$ has $\ent^\star(\phi)=0$ \cite[Proposition 3.7]{DGS}. The surprising dichotomy discovered in \cite{DGS} (namely, $\CC(G)$ is either countable or has size at least $\cont$) is fully explained by Theorem \ref{WEIGHT} (see also Lemma \ref{profin*}), since $|\CC(G)|$ coincides with the size of the torsion part of a compact abelian group (namely, the Pontryagin dual of $G$). 

Indeed, in Section \ref{C(G)} we study the cardinality of $\CC(G)$ of an abelian group $G$ in the general setting.
The family $\CC(G)$ forms a semilattice with respect to intersections and with top element $G$. One can look at $\CC(G)$ also as a filter-base that gives rise to the profinite topology $\gamma_G$ of the abelian group $G$. More precisely, we show that when $\CC(G)$ is infinite, its cardinality coincides with the weight and the local weight of $(G,\gamma_G)$. So, the purely algebraic object $\CC(G)$ is strictly related to the topological invariants of the profinite topology of $G$ (see below for the definitions of these topological invariants).

Making use of the results from Section \ref{profinite}, for any abelian group $G$ we characterize the size and the cofinality of $\CC(G)$, that is, we compute the weight and the local weight of $(G,\gamma_G)$ in Theorem \ref{WEIGHT}.
Furthermore, Theorem \ref{density} characterizes the density character of $(G,\gamma_G)$, using the fact that it coincides with the density character and the weight of $(G,\nu_G)$.
In another direction, in Theorem \ref{str_div} we describe the abelian groups $G$ for which the poset $\mathcal C(G)\setminus \{G\}$ is cofinal in the larger poset $\mathcal S(G)$ of all subgroups of $G$ ordered by inclusion. 

\medskip
Inspired by the fact that the narrow abelian groups form precisely the class of abelian groups for which the profinite and the natural topology coincide (see Theorem \ref{narrow:char}), in Section \ref{constr-sec} we define the equalizer $\mathcal E(\mathcal T,\mathcal S)$ of a pair of functorial topologies $\mathcal T,\mathcal S$. 
Moreover, we describe its basic properties and arrive in this way to the standard correspondence between functorial topologies and classes of abelian groups stable under isomorphisms, finite products and subgroups. 
In this section we provide also more examples of functorial topologies to better illustrate the usefulness of the equalizer. 

\bigskip
We dedicate this paper to the seventieth birthday of Eraldo Giuli, for his relevant contributions in the field of categorical topology and in particular, the closure operators in the sense of  \cite{CDT,DG,DG1,DT}, of which the functorial topologies in the category of abelian groups are a relevant inspiring example.

\subsection*{Notation and terminology} 

We denote by $\mathbb Z$, $\mathbb N$, $\mathbb N_+$, $\Q$ and $\R$ respectively the set of integers, the set of natural numbers, the set of positive integers, the set of rationals and the set of reals. For $m\in\mathbb N_+$, we use $\mathbb Z(m)$ for the finite cyclic group of order $m$. 
Consider on $\T=\R/\Z$ the norm given by $||r+\Z||=\min\{d(r,m):m\in\Z\}$, with $d$ the usual metric of $\R$.

All groups in this paper are abelian. 
For an abelian group $G$ and $m\in \Z$, we let
$mG = \{mx: x\in G\}$ and $G^1=\bigcap_{m\in\N_+}m G$, the \emph{first Ulm subgroup} of $G$. 
Obviously, $(G/G^1)^1=0$. We say that an abelian group $G$ is {\em divisible}, if  $G^1=G$
(i.e., $G=mG$ for every $ m\in \N_+$). We denote by $D(G)$ the biggest divisible subgroup of $G$ (namely, the one generated by all divisible subgroups of $G$). 
Obviously, $D(G) \subseteq G^1$. We call $G$ {\em reduced} if $D(G)=0$. 
We denote by $r_0(G)$ the torsion-free rank of $G$ and, for a prime $p$, $r_p(G)$ denotes the $p$-rank of $G$, that is, $\dim_{\mathbb F_p}G[p]$, where $G[p]=\{x\in G:px =0\}$ is the $p$-socle of $G$ and $\mathbb F_p$ is the field with $p$ elements. More generally, for $n\in\N_+$, let $G[n]=\{x\in G:nx=0\}$.

For a subset $M$ of a topological space $X$, we denote by $\overline{M}$ the closure of $M$. 
For a topological group $(G,\tau)$ and a subgroup $H$ of $G$, let $\tau_q$ denote the quotient topology of $\tau$ on $G/H$. 
Moreover, the quotient group $(G/\overline{\{0\}},\tau_q)$ is the largest Hausdorff
quotient group of $(G,\tau)$; we call it the \emph{Hausdorff reflection} of $(G,\tau)$ (as $(G,\tau) \mapsto (G/\overline{\{0\}},\tau_q)$
defines a reflection of the category of all topological groups into the full subcategory of all Hausdorff topological groups). 

We denote by $\delta_G$ and $\iota_G$ respectively the discrete and the indiscrete topology of an abelian group $G$. 
We denote by $(G,\tau)^*$ the \emph{dual group} of a topological abelian group $(G,\tau)$, that is, $(G,\tau)^*$ is the abelian group of all continuous characters $(G,\tau)\to\mathbb T$, endowed with the discrete topology. In particular, $G^*=\Hom(G,\T)$.

For a topological abelian group $(G,\tau)$, the \emph{weight} $w(G,\tau)$ is the minimum cardinality of a base of $(G,\tau)$, and the \emph{local weight} (or, \emph{character}) $\chi(G,\tau)$ is the minimum cardinality of a local base of $(G,\tau)$. If $(G,\tau)$ is a totally bounded abelian group, then $w(G,\tau)=\chi(G,\tau)$ \cite{C}. 
Finally, the \emph{density character} $d(G,\tau)$ is the minimum cardinality of a dense subset of $(G,\tau)$.

\section{The profinite, the natural and the Bohr topology}\label{profinite}

The following are basic properties of functorial topologies.

\begin{lemma}\label{functorial-basic}
Let $\mathcal T$ be a functorial topology. Then:
\begin{itemize}
\item[(a)] $\mathcal T_{G_1\times G_2}=\mathcal T_{G_1}\times\mathcal T_{G_2}$ for every $G_1,G_2\in\Ab$;
\item[(b)] $\mathcal T_G\geq\prod_{i\in I}\mathcal T_{G_i}$ for every arbitrary family $\{G_i:i\in I\}$ in $\Ab$ with $G=\prod_{i\in I}G_i$;
\item[(c)] $\mathcal T_H\geq\mathcal T_G\restriction_H$ for every $G\in\Ab$ and every subgroup $H$ of $G$; equality holds for every $G$ and $H$ if and only if $\mathcal T$ is hereditary;
\item[(d)] $(\mathcal T_G)_q\geq\mathcal T_{G/H}$ for every $G\in\Ab$ and every subgroup $H$ of $G$; equality holds for every $G$ and $H$ if and only if $\mathcal T$ is ideal.
\end{itemize}
\end{lemma}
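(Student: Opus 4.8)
The plan is to prove each of the four items by unwinding the definition of a functorial topology, namely that every group homomorphism is continuous for the given topologies. I would first establish (a) and (b), since the inequalities in (b), (c), (d) all rest on the same underlying principle: a map into or out of $G$ that is a homomorphism must be continuous, and this forces comparisons between the intrinsic topology $\mathcal T_G$ and the topology induced by the relevant construction (product, subspace, quotient).

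For items (a) and (b), I would use the canonical projections and injections. For a product $G=\prod_{i\in I}G_i$, each projection $\pi_i:G\to G_i$ is a homomorphism, hence continuous $(G,\mathcal T_G)\to(G_i,\mathcal T_{G_i})$. Since $\prod_{i\in I}\mathcal T_{G_i}$ is the initial topology making all the $\pi_i$ continuous, continuity of every $\pi_i$ with respect to $\mathcal T_G$ gives exactly $\mathcal T_G\geq\prod_{i\in I}\mathcal T_{G_i}$, which is (b). For the finite case (a), I additionally use the injections $\eta_j:G_j\to G_1\times G_2$, which are homomorphisms and hence continuous; combined with Lemma \ref{functorial-basic}(c) applied to the summand $G_j$ as a direct factor (or a direct diagram-chase through the continuity of the injections and projections), one obtains the reverse inequality $\mathcal T_{G_1\times G_2}\leq\mathcal T_{G_1}\times\mathcal T_{G_2}$, giving equality. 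The point is that for \emph{finite} products the injections furnish the missing inequality that fails in general for infinite products.

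For items (c) and (d), I would argue as follows. For (c), the inclusion $\iota_H:H\hookrightarrow G$ is a homomorphism, so it is continuous $(H,\mathcal T_H)\to(G,\mathcal T_G)$; continuity of $\iota_H$ is precisely the statement that $\mathcal T_H$ is finer than the subspace topology $\mathcal T_G\restriction_H$, i.e. $\mathcal T_H\geq\mathcal T_G\restriction_H$. For (d), the canonical projection $q:G\to G/H$ is a homomorphism, hence continuous $(G,\mathcal T_G)\to(G/H,\mathcal T_{G/H})$; since $(\mathcal T_G)_q$ is by definition the finest topology on $G/H$ making $q$ continuous from $(G,\mathcal T_G)$, continuity of $q$ with respect to the target topology $\mathcal T_{G/H}$ forces $\mathcal T_{G/H}\leq(\mathcal T_G)_q$, which is the asserted inequality. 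For the characterizations of equality, I would simply invoke the definitions given in the excerpt: $\mathcal T$ hereditary means $\mathcal T_H=\mathcal T_G\restriction_H$ for all $G$ and $H$ (this is the defining condition, so equality in (c) for all $G,H$ is a tautological restatement), and $\mathcal T$ ideal means $\mathcal T$ sends surjective homomorphisms to open maps; I would check that openness of every quotient map $q:(G,\mathcal T_G)\to(G/H,\mathcal T_{G/H})$ is equivalent to $(\mathcal T_G)_q=\mathcal T_{G/H}$ for all $G,H$.

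I do not expect a serious obstacle here, as the whole lemma is essentially a translation of the continuity axiom of functorial topologies into topology-comparison statements via the universal properties of subspace, quotient and product topologies. The only point requiring a little care is the ideal-equivalence in (d): one must verify that ``$q$ open'' matches ``$(\mathcal T_G)_q=\mathcal T_{G/H}$,'' which uses that the quotient topology is always the finest making $q$ continuous, so $q$ is automatically continuous and quotient-open exactly when the target topology agrees with $(\mathcal T_G)_q$. This, together with keeping the direction of each inequality straight (finer versus coarser), is the main thing to handle with precision; the rest is routine diagram-chasing with the defining functoriality.
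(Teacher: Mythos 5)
Your proposal is correct and takes essentially the same approach as the paper's proof: continuity of the projections yields $\mathcal T_G\geq\prod_{i\in I}\mathcal T_{G_i}$ (hence (b) and one half of (a)), continuity of the injections $G_j\to G_1\times G_2$ yields the reverse inequality in (a), and (c), (d) are precisely the continuity of the inclusion and of the canonical projection combined with the universal properties of the subspace and quotient topologies, with the hereditary/ideal equivalences unwound from the definitions as you describe. The one step worth writing out in (a) is that merging the continuity of the two injections into $\mathcal T_{G_1\times G_2}\leq\mathcal T_{G_1}\times\mathcal T_{G_2}$ uses the group structure (given a $\mathcal T_{G_1\times G_2}$-neighborhood $W$ of $0$, choose $W'$ with $W'+W'\subseteq W$, so that $U_1\times U_2=(U_1\times\{0\})+(\{0\}\times U_2)\subseteq W$); this is exactly what the paper packages into the computation $\mathcal T_{G_1\times G_2}\leq\inf\{\mathcal T_{G_1}\times\delta_{G_2},\,\delta_{G_1}\times\mathcal T_{G_2}\}=\mathcal T_{G_1}\times\mathcal T_{G_2}$.
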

\begin{proof}
(a) Consider the projections $p_j:(G,\mathcal T_{G_1\times G_2})\to (G_j,\mathcal T_j)$ for $j=1,2$, which are continuous by the definition of functorial topology. Then, for every neighborhood $U_1\times U_2$ of $0$ in $(G_1\times G_2,\mathcal T_1\times \mathcal T_2)$ there exists a neighborhood $W$ of $0$ in $(G_1\times G_2,\mathcal T_{G_1\times G_2})$ such that $p_j(W)\subseteq U_j$ for $j=1,2$, that is, $W\subseteq U_1\times U_2$. Hence, $\mathcal T_{G_1\times G_2}\geq\mathcal T_{G_1}\times\mathcal T_{G_2}$. To prove the converse inequality consider the inclusions $i_j:(G_i,\mathcal T_{G_i})\to (G_1\times G_2,\mathcal T_{G_1\times G_2})$, for $j=1,2$, which are continuous by the definition of functorial topology. Then $\mathcal T_{G_1\times G_2}\leq \inf\{\mathcal T_{G_1}\times \delta_{G_2},\delta_{G_1}\times \mathcal T_{G_2}\}=\mathcal T_{G_1}\times\mathcal T_{G_2}$.

\smallskip
To prove (b) proceed as in the first part of the proof of item (a).
For (c) and (d) it suffices to note that by definition the inclusion $(H,\mathcal T_H)\hookrightarrow(G,\mathcal T_G)$ and  the projection $(G,\mathcal T_G)\to (G/H,\mathcal T_{G/H})$ are continuous.
\end{proof}

We introduce a partial order between functorial topologies by letting $\mathcal T \leq \mathcal S$ whenever $\mathcal T_G \leq \mathcal S_G$ for every abelian group $G$. This makes the class $\mathfrak F t$ of all functorial topologies a large complete lattice with top element $\delta$ and bottom element $\iota$.

\subsection{The profinite topology vs the natural topology}\label{profinite-natural}

There is an important connection (see \cite{F}) between the first Ulm subgroup $G^1$ of an abelian group $G$ and the family $\CC(G)$, namely
\begin{equation}\label{G^1}
G^1=\bigcap_{N\in\CC(G)}N.
\end{equation}

Now we recall a notion closely related to these two concepts:

\begin{deff}
An abelian group $G$ is \emph{residually finite} if $G$ is isomorphic to a subgroup of a direct product of finite abelian groups.
\end{deff}

Obviously, every residually finite abelian group is reduced.
\medskip

Clearly, the natural topology is metrizable whenever it is Hausdorff. As a consequence of \eqref{G^1}, Lemma \ref{Haus} shows that the profinite and the natural topology are simultaneously Hausdorff (respectively, indiscrete). Moreover, we see that this occurs precisely when the abelian group is residually finite (respectively, divisible) (which are merely algebraic properties).

\begin{lemma}\label{res-fin<->G1=0}\label{Haus}
Let $G$ be an abelian group. Then $G^1=\overline {\{0\}}^{\gamma_G}=\overline{\{0\}}^{\nu_G}$, 
so $(G/G^1,\gamma_{G/G^1})$ is the Hausdorff reflection of $(G,\gamma_G)$ and $(G/G^1,\nu_{G/G^1})$ is the Hausdorff reflection of $(G,\nu_G)$
Moreover, the following conditions are equivalent: 
\begin{itemize}
\item[(a)] $\gamma_G$ is Hausdorff (respectively, indiscrete); 
\item[(b)] $G$ is residually finite (respectively, divisible); 
\item[(c)] $G^1=0$ (respectively, $G^1= G$); 
\item[(d)] $\nu_G$ is Hausdorff (respectively, indiscrete). 
\end{itemize}
\end{lemma}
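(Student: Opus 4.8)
The plan is to read off the closure of $\{0\}$ in each topology from its definition, since both $\gamma_G$ and $\nu_G$ are linear: each has a base of neighborhoods of $0$ consisting of subgroups. In any topological group one has $\overline{\{0\}}=\bigcap\{V:V \text{ a neighborhood of }0\}$, and when a base of neighborhoods of $0$ consists of subgroups this intersection collapses to the intersection of those basic subgroups. Applying this to $\gamma_G$ gives $\overline{\{0\}}^{\gamma_G}=\bigcap_{N\in\CC(G)}N$, which equals $G^1$ by \eqref{G^1}; applying it to $\nu_G$ gives $\overline{\{0\}}^{\nu_G}=\bigcap_{m\in\N_+}mG$, which is $G^1$ by the very definition of the first Ulm subgroup. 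This establishes the first displayed assertion. For the statement about Hausdorff reflections, recall that the Hausdorff reflection of $(G,\tau)$ is $(G/\overline{\{0\}},\tau_q)$; taking $H=G^1=\overline{\{0\}}$ and using that $\gamma$ and $\nu$ are ideal, Lemma \ref{functorial-basic}(d) yields $(\gamma_G)_q=\gamma_{G/G^1}$ and $(\nu_G)_q=\nu_{G/G^1}$, so the reflections are $(G/G^1,\gamma_{G/G^1})$ and $(G/G^1,\nu_{G/G^1})$ as claimed.

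Next I would dispatch the equivalences that follow formally from the closure computation. A group topology is Hausdorff precisely when $\overline{\{0\}}=0$, so (a) and (d) are each equivalent to $G^1=0$, i.e.\ to (c), in the Hausdorff case. For the parenthetical (indiscrete) case, these linear topologies are indiscrete exactly when $\overline{\{0\}}$ is the whole group: for $\gamma_G$ this forces every finite-index subgroup to equal $G$ (equivalently $\CC(G)=\{G\}$), for $\nu_G$ it forces $mG=G$ for all $m$, and in both cases the condition reads $G^1=G$, which is again (c). Thus (a)$\Leftrightarrow$(c)$\Leftrightarrow$(d) in both the Hausdorff and the indiscrete reading, with no further work beyond the first paragraph.

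It remains to prove (b)$\Leftrightarrow$(c). In the indiscrete/divisible reading there is nothing to do, since $G$ divisible means $G^1=G$ by the definition adopted in the excerpt. The Hausdorff/residually finite reading carries the only genuine content. If $G$ is residually finite, fix an embedding $G\hookrightarrow\prod_i F_i$ into a product of finite groups; each coordinate projection $G\to F_i$ has finite-index kernel $K_i\in\CC(G)$, and injectivity of the embedding gives $\bigcap_i K_i=0$, whence $G^1\subseteq\bigcap_i K_i=0$. Conversely, if $G^1=0$, then by \eqref{G^1} the canonical map $G\to\prod_{N\in\CC(G)}G/N$ into a product of finite groups has kernel $\bigcap_{N\in\CC(G)}N=G^1=0$ and is therefore injective, exhibiting $G$ as residually finite. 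Constructing this canonical embedding is the one step with actual mathematical content; everything else is bookkeeping around \eqref{G^1}, the ideal property, and the elementary description of $\overline{\{0\}}$ in a linear topology, so I anticipate no serious obstacle.
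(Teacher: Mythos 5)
Your proposal is correct and follows essentially the same route as the paper: identify $\overline{\{0\}}^{\gamma_G}=\overline{\{0\}}^{\nu_G}=G^1$ via \eqref{G^1} and the linearity of the two topologies, deduce (a)$\Leftrightarrow$(c)$\Leftrightarrow$(d) formally, and prove the residually finite case of (b)$\Leftrightarrow$(c) by the diagonal embedding $G\to\prod_{N\in\CC(G)}G/N$, exactly as the paper does. The only cosmetic difference is in the direction ``residually finite $\Rightarrow G^1=0$'', where the paper observes $F^1=0$ for the ambient product $F$ of finite groups while you intersect the finite-index kernels of the coordinate projections; both are immediate, and your write-up simply makes explicit the steps the paper calls obvious.
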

\begin{proof} 
The first two assertions are obvious and imply the equivalences (a)$\Leftrightarrow$(c)$\Leftrightarrow$(d). 

To prove that (b) and (c) are equivalent, first note that clearly $G^1=G$ precisely when $G$ is divisible.
Now assume that $G$ is residually finite. Then $G$ is isomorphic to a subgroup of $F=\prod_{i\in I}F_i$, where each $F_i$ is a finite abelian group. Since $F^1=0$, also $G^1=0$.
To prove the converse implication suppose that $G^1=0$. It follows from \eqref{G^1} that $G$ is isomorphic to a subgroup of $\prod_{N\in\CC(G)}G/N$, where each $G/N$ is obviously finite 
(for every $N\in\CC(G)$ consider the canonical projection $G\to G/N$; then the diagonal homomorphism $G\to \prod_{N\in\CC(G)}G/N$ is injective by the assumption $G^1=0$).
\end{proof}

\begin{remark}\label{G1} 
Let $G$ be an abelian group. Every $N\in\CC(G)$ contains $G^1$, and the canonical projection
$\pi:G \to G/G^1$ gives rise to a bijection between $\CC(G/G^1)$ and $\CC(G)$ by taking inverse images under $\pi$.
\end{remark}

Now we characterize the abelian groups $G$ with finite $\CC(G)$. 

\begin{lemma} \label{almost-divisible}
Let $G$ be an abelian group. Then the following conditions are equivalent: 
\begin{itemize}
\item[(a)]  $G/G^1$ is finite; 
\item[(b)] $G/D(G)$ is finite; 
\item[(c)] the Hausdorff reflection of $(G,\gamma_G)$ is finite; 
\item[(d)] $\CC(G)$ is finite. 
\end{itemize}
\end{lemma}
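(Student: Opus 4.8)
The plan is to treat condition (a), that $G/G^1$ is finite, as the hub and prove each of the remaining three conditions equivalent to it, exploiting the identity $G^1=\bigcap_{N\in\CC(G)}N$ from \eqref{G^1}, the inclusion $D(G)\subseteq G^1$, and the description of the Hausdorff reflection of $(G,\gamma_G)$ from Lemma \ref{Haus}.

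First I would dispatch the two ``easy'' equivalences. For (a)$\Leftrightarrow$(c): Lemma \ref{Haus} identifies the Hausdorff reflection of $(G,\gamma_G)$ with $(G/G^1,\gamma_{G/G^1})$, whose underlying group is $G/G^1$; since a finite group carries the discrete profinite topology, this reflection is finite exactly when $G/G^1$ is finite. For (a)$\Leftrightarrow$(d): if $\CC(G)$ is finite, then by \eqref{G^1} the subgroup $G^1$ is a finite intersection of finite-index subgroups, hence itself of finite index, giving (a); conversely, if $G/G^1$ is finite, then Remark \ref{G1} provides a bijection between $\CC(G)$ and $\CC(G/G^1)$, and the finite group $G/G^1$ has only finitely many subgroups, so $\CC(G)$ is finite.

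The implication (b)$\Rightarrow$(a) is immediate from $D(G)\subseteq G^1$, which yields a surjection $G/D(G)\to G/G^1$. The substantial step, and the one I expect to be the main obstacle, is (a)$\Rightarrow$(b): one must upgrade finiteness of $G/G^1$ to finiteness of $G/D(G)$, closing the a priori gap between $G^1$ and $D(G)$. My plan is to show that under (a) one in fact has $G^1=D(G)$. Writing $k=|G/G^1|$, the relation $k(G/G^1)=0$ gives $kG\subseteq G^1$, while $G^1=\bigcap_{n\in\N_+}nG\subseteq kG$ gives the reverse inclusion, so $G^1=kG$. The key observation is then that $G^1$ is divisible: for every $m\in\N_+$ we have $G^1=\bigcap_{n\in\N_+}nG\subseteq mkG=mG^1$, whence $mG^1=G^1$. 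A divisible subgroup lies in $D(G)$, so $G^1\subseteq D(G)$, and combined with $D(G)\subseteq G^1$ this yields $G^1=D(G)$ and hence (b).

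The only delicate point to verify carefully is the inclusion $G^1=\bigcap_{n\in\N_+}nG\subseteq mkG$, which rests on $mk$ being among the indices over which the intersection is taken, together with the already established equality $G^1=kG$; once that divisibility of $G^1$ is in hand, everything else is a routine assembly of the four equivalences around (a).
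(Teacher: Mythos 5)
Your proof is correct, and its overall architecture --- using (a) as the hub, dispatching (c) via Lemma \ref{Haus}, handling (d) via Remark \ref{G1} together with \eqref{G^1}, and reducing (a)$\Leftrightarrow$(b) to the identity $G^1=D(G)$ --- is exactly that of the paper. The one genuine difference is how divisibility of $G^1$ is established in (a)$\Rightarrow$(b). The paper argues conceptually: since $G^1$ has finite index in $G$, any proper finite-index subgroup of $G^1$ would itself belong to $\CC(G)$ while being properly contained in $G^1$, contradicting \eqref{G^1}; hence $G^1$ admits no proper finite-index subgroup and is therefore divisible (this last implication implicitly uses that a non-divisible abelian group $A$ satisfies $pA\neq A$ for some prime $p$, and the nonzero $\mathbb{F}_p$-vector space $A/pA$ certainly has proper finite-index subgroups). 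You replace this with a direct computation: $kG\subseteq G^1\subseteq kG$ where $k=|G/G^1|$, so $G^1=kG$, and then $G^1=\bigcap_{n\in\N_+}nG\subseteq (mk)G=mG^1$ for every $m\in\N_+$. Your version is more elementary and self-contained, needing nothing beyond the intersection formula \eqref{G^1} and Lagrange's theorem, whereas the paper's phrasing is shorter but leaves a small piece of structure theory to the reader; both yield the same key conclusion $G^1=D(G)$, after which the remaining equivalences are assembled identically.
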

\begin{proof}
(a)$\Rightarrow$(b) As $G^1$ has finite index in $G$, $G^1$ admits no proper finite-index subgroup. Consequently, $G^1$ is divisible. Since $D(G)\subseteq G^1$, we conclude that $G^1=D(G)$.

\smallskip
(b)$\Rightarrow$(a) Is clear, since $D(G)\subseteq G^1$.

\smallskip
(a)$\Leftrightarrow$(c) Is given by Lemma \ref{res-fin<->G1=0}.

\smallskip
(a)$\Rightarrow$(d) Follows from Remark \ref{G1}.

\smallskip
(d)$\Rightarrow$(a) Since $\CC(G)$ is finite, $G^1$ has finite index in $G$.
\end{proof}

If the equivalent conditions of Lemma \ref{almost-divisible} hold true for an abelian group $G$, then $G=D(G)\times F$, where $F$ is a finite abelian group. This is why we call such a group {\em almost divisible}. 

\medskip
For reader's convenience, we collect in the next face some easy to prove properties of the profinite topology. Note that (a) and (c) are obvious and (b) is given by Lemma \ref{res-fin<->G1=0}.

\begin{fact}\label{profin}
Let $G$ be an abelian group. Then:
\begin{itemize}
\item[(a)] $(G,\gamma_G)$ is totally bounded; 
\item[(b)] $(G,\gamma_G)$ is precompact if and only if $G$ is residually finite;
\item[(c)] every surjective homomorphism $\phi: (G,\gamma_G) \to (H,\gamma_H)$ is continuous and open (i.e., the profinite topology is an ideal functorial topology); in particular, $w(G,\gamma_G)\geq w(H, \gamma_H)$.
\end{itemize}
\end{fact}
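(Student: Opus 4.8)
The plan is to verify the three items of Fact \ref{profin} directly from the definitions, since each follows from an elementary observation about the profinite topology $\gamma_G$, whose neighborhoods of $0$ are exactly the finite-index subgroups $N\in\CC(G)$.

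For item (a), I would show that $(G,\gamma_G)$ is totally bounded. Take any nonempty $\gamma_G$-open set $U$; it contains a coset $x+N$ for some $N\in\CC(G)$ and some $x\in G$. Since $N$ has finite index, $G/N$ is finite, so we may pick a finite set $F\subseteq G$ of coset representatives. Then $N+F=G$, whence $U+F\supseteq (x+N)+(F-x)=G$ after adjusting the finite translating set; more cleanly, since $N\subseteq U-x$, the finitely many cosets of $N$ cover $G$ and so $U$ translated by finitely many elements covers $G$. Thus $(G,\gamma_G)$ is totally bounded. This step is routine.

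For item (b), I would invoke Lemma \ref{res-fin<->G1=0}. By definition, precompact means totally bounded and Hausdorff. By (a), $(G,\gamma_G)$ is always totally bounded, so precompactness is equivalent to $\gamma_G$ being Hausdorff. By Lemma \ref{res-fin<->G1=0}, the topology $\gamma_G$ is Hausdorff if and only if $G$ is residually finite. This gives the claim immediately.

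For item (c), I would show that any surjective homomorphism $\phi:(G,\gamma_G)\to(H,\gamma_H)$ is open, i.e.\ that $\gamma_G$ is ideal; continuity is automatic from the definition of functorial topology. By Lemma \ref{functorial-basic}(d), it suffices to check that for every $N\in\CC(G)$ the image $\phi(N)$ is a neighborhood of $0$ in $(H,\gamma_H)$, that is, $\phi(N)\in\CC(H)$. The key point is that $\phi(N)$ has finite index in $H$: indeed, $\phi$ induces a surjection $G/N\twoheadrightarrow H/\phi(N)$ (using surjectivity of $\phi$), so $[H:\phi(N)]\le [G:N]<\infty$. Hence $\phi(N)$ is a finite-index subgroup of $H$ and therefore $\gamma_H$-open, proving openness. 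The final inequality $w(G,\gamma_G)\ge w(H,\gamma_H)$ then follows because a continuous open surjection pushes a base at $0$ forward to a base at $0$, so any base for $\gamma_G$ maps onto a base for $\gamma_H$. The only mild subtlety here — and the step I would treat most carefully — is confirming that surjectivity of $\phi$ is genuinely needed to guarantee $\phi(N)$ has finite index (for a non-surjective map one would instead work with $\phi^{-1}$ of finite-index subgroups), but since the statement assumes $\phi$ surjective this causes no trouble.
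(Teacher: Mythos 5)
Your proposal is correct and matches the paper's (essentially unstated) proof: the paper simply declares (a) and (c) obvious and cites Lemma \ref{res-fin<->G1=0} for (b), and your arguments are precisely the natural fillings-in of those claims --- covering $G$ by the finitely many cosets of a finite-index subgroup for (a), Hausdorffness $\Leftrightarrow$ residual finiteness for (b), and the surjection $G/N \twoheadrightarrow H/\phi(N)$ for (c), with the weight inequality following since a continuous open surjection carries a base onto a base.
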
 

\begin{example}\label{QZ}
\begin{itemize}
\item[(a)] Analogously to item (c) of Fact \ref{profin}, the natural topology is an ideal functorial topology. 
\item[(b)] If $D$ is a divisible abelian group, then $\nu_D$ is indiscrete, and so $\gamma_D$ is indiscrete as well. Indeed, $mD=D$ for every $m\in\N_+$.
\item[(c)] Consider $\Z\subseteq\Q$. By item (a) $\nu_\Q$ and $\gamma_\Q$ coincide with the indiscrete topology of $\Q$. Then $\nu_\Q\restriction_\Z$ and $\gamma_\Q\restriction_\Z$ coincide with the indiscrete topology of $\Z$, while $\nu_\Z$ and $\gamma_\Z$ are not indiscrete. This proves that the natural and the profinite topology are not hereditary.
\end{itemize}
\end{example}

We show now that the family of all $\gamma_G$-closed (respectively, $\gamma_G$-dense) subgroups of an abelian group $G$ coincides with the family of all $\nu_G$-closed (respectively, $\nu_G$-dense) subgroups of $G$:

\begin{lemma}\label{closed_subgroups}
Let $G$ be an abelian group and $H$ a subgroup of $G$. Then: 
\begin{itemize}
\item[(a)] $H$ is $\gamma_G$-closed if and only if $H$ is $\nu_G$-closed; 
\item[(b)] $H$ is $\gamma_G$-dense if and only if $H$ is $\nu_G$-dense. 
\end{itemize}
\end{lemma}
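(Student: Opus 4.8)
The plan is to prove both parts at once, by computing the $\gamma_G$-closure and the $\nu_G$-closure of an arbitrary subgroup $H$ and showing that they coincide; parts (a) and (b) then fall out as the two extreme cases $\overline{H}=H$ and $\overline{H}=G$. Throughout I would pass to the quotient $G/H$ via the canonical projection $q\colon G\to G/H$, using the standard fact that in any topological group the closure of a subgroup $H$ equals $q^{-1}(\overline{\{0\}})$, where $\overline{\{0\}}$ is computed in $G/H$ with the quotient topology (this uses only that $q$ is a continuous open surjection).

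The key step is to replace the two quotient topologies by the corresponding functorial topologies on $G/H$. Since $\gamma$ is ideal by Fact~\ref{profin}(c) and $\nu$ is ideal by Example~\ref{QZ}(a), Lemma~\ref{functorial-basic}(d) yields $(\gamma_G)_q=\gamma_{G/H}$ and $(\nu_G)_q=\nu_{G/H}$. Now Lemma~\ref{Haus}, applied to the group $G/H$, computes the closure of $0$ in \emph{both} of these topologies as one and the same subgroup, the first Ulm subgroup:
\[
\overline{\{0\}}^{\gamma_{G/H}}=(G/H)^1=\overline{\{0\}}^{\nu_{G/H}}.
\]
Pulling back along $q$ therefore gives $\overline{H}^{\gamma_G}=q^{-1}\big((G/H)^1\big)=\overline{H}^{\nu_G}$, so the $\gamma_G$-closure and the $\nu_G$-closure of $H$ agree for every subgroup $H$.

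Once the closures are identified, both assertions are immediate and require no further work. For (a), $H$ is $\gamma_G$-closed (respectively, $\nu_G$-closed) exactly when $q^{-1}((G/H)^1)=H$, that is $(G/H)^1=0$, a single condition not depending on which of the two topologies is used; for (b), $H$ is $\gamma_G$-dense (respectively, $\nu_G$-dense) exactly when $q^{-1}((G/H)^1)=G$, that is $(G/H)^1=G/H$ ($G/H$ divisible), again the same condition for both. I expect no serious obstacle: the entire argument rests on the one observation that the idealness of $\gamma$ and $\nu$ transports each question to $G/H$, where Lemma~\ref{Haus} already records that $\gamma$ and $\nu$ share the same Hausdorff reflection. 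The only point needing a little care is the ``closure of a subgroup equals preimage of the closure of zero'' identity; alternatively, one can avoid the quotient entirely and compute directly that $\overline{H}^{\gamma_G}=\bigcap_{N\in\CC(G)}(H+N)$ and $\overline{H}^{\nu_G}=\bigcap_{m\in\N_+}(H+mG)$, whose images in $G/H$ are $\bigcap_{M\in\CC(G/H)}M$ and $\bigcap_{m\in\N_+}m(G/H)$ respectively, both equal to $(G/H)^1$ by \eqref{G^1} and the definition of the first Ulm subgroup.
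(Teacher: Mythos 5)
Your proof is correct and follows essentially the same route as the paper's: both use the idealness of $\gamma$ and $\nu$ (via Lemma \ref{functorial-basic}(d)) to identify the quotient topologies on $G/H$ with $\gamma_{G/H}$ and $\nu_{G/H}$, and then invoke Lemma \ref{Haus} on the quotient. The only cosmetic difference is that you compute the closure of $H$ explicitly as $q^{-1}\bigl((G/H)^1\bigr)$ in both topologies at once, whereas the paper phrases closedness and density of $H$ as Hausdorffness and indiscreteness of the quotient topology---two readings of the same content of Lemma \ref{Haus}.
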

\begin{proof}
Since both $\gamma_G$ and $\nu_G$ are ideal, their quotient topologies on $G/H$ coincide with 
$\gamma_{G/H}$ and $\nu_{G/H}$ respectively. 

\smallskip
(a) The subgroup $H$ is $\gamma_G$-closed (respectively, $\nu_G$-closed) if and only if $\gamma_{G/H}$ (respectively, $\nu_{G/H}$) is Hausdorff. By Lemma \ref{Haus}, $\gamma_{G/H}$ is Hausdorff precisely when $\nu_{G/H}$ is Hausdorff, and so $H$ is $\gamma_G$-closed if and only if $H$ is $\nu_G$-closed.

\smallskip
(b) The subgroup $H$ is $\gamma_G$-dense (respectively, $\nu_G$-dense) if and only if $\gamma_{G/H}$ (respectively, $\nu_{G/H}$) is indiscrete. By Lemma \ref{Haus}, $\gamma_{G/H}$ is indiscrete precisely when $\nu_{G/H}$ is indiscrete, and so $H$ is $\gamma_G$-dense if and only if $H$ is $\nu_G$-dense.
\end{proof}

\subsection{The profinite topology vs the Bohr topology}\label{profinite-bohr}

In the following lemma we give known useful properties of the Bohr topology. Let $G$ be an abelian group. For every $\chi\in G^*$ and $0<\varepsilon\leq1$, let $$U_G(\chi,\varepsilon)=\{x\in G: ||\chi(x)||<\varepsilon\}.$$ Then $\{U_G(\chi,\varepsilon):0<\varepsilon\leq1,\chi\in G^*\}$ is a subbase of the neighborhoods of $0$ in $(G,\mathcal P_G)$.

\begin{lemma}\label{PG}
Let $G$ be an abelian group. Then:
\begin{itemize}
\item[(a)] $\mathcal P_G$ is precompact;
\item[(b)] $\mathcal P$ is hereditary and ideal;
\item[(c)]  $w(G,\mathcal P_G) = 2^{|G|}$;
\item[(d)] every subgroup of $G$ is $\mathcal P_G$-closed;
\item[(e)] $d(G,\mathcal P_G)=|G|$.
\end{itemize}
\end{lemma}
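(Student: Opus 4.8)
The plan is to derive all five items from two external inputs---that $\T$ is a divisible (equivalently, injective) abelian group whose characters separate points, and that $\mathcal P_G$ is the maximal totally bounded group topology on $G$---together with the functorial inequalities of Lemma \ref{functorial-basic} and Pontryagin duality for the compact completion. First I would establish (a). Since characters separate the points of $G$ (a consequence of the divisibility of $\T$: any $x\neq 0$ lies in a cyclic subgroup $\langle x\rangle$ admitting a character $\psi$ with $\psi(x)\neq 0$, and $\psi$ extends to $G$ because $\T$ is injective), the diagonal map $G\to\T^{G^*}$, $x\mapsto(\chi(x))_{\chi\in G^*}$, is an injective homomorphism. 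By the very definition of $\mathcal P_G$ as the initial topology of the family $G^*=\HG$, this map is a topological embedding into the compact group $\T^{G^*}$; hence $(G,\mathcal P_G)$ is precompact.

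For (b) I would exploit the maximality of $\mathcal P$ among totally bounded topologies. A subgroup of a totally bounded group is totally bounded in the subspace topology, so $\mathcal P_G\restriction_H$ is a totally bounded group topology on $H$; maximality gives $\mathcal P_G\restriction_H\leq\mathcal P_H$, while Lemma \ref{functorial-basic}(c) gives the reverse inequality, so $\mathcal P_H=\mathcal P_G\restriction_H$ and $\mathcal P$ is hereditary. Dually, a quotient of a totally bounded group is totally bounded, so $(\mathcal P_G)_q$ is a totally bounded topology on $G/H$; maximality gives $(\mathcal P_G)_q\leq\mathcal P_{G/H}$, Lemma \ref{functorial-basic}(d) gives the reverse, so $(\mathcal P_G)_q=\mathcal P_{G/H}$ and $\mathcal P$ is ideal.

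Items (d) and (e) then follow formally. For (d), a subgroup $H$ is $\mathcal P_G$-closed exactly when $(\mathcal P_G)_q$ is Hausdorff on $G/H$; by the ideal property (b) this quotient topology is $\mathcal P_{G/H}$, which is Hausdorff by (a). For (e), the bound $d(G,\mathcal P_G)\leq|G|$ is trivial, and for the reverse I would take any dense subset $D$ and pass to the subgroup $\langle D\rangle$ it generates: it is dense (as it contains $D$) and, by (d), closed, hence $\langle D\rangle=G$. Since an infinite Hausdorff group has no finite dense subset, a dense $D$ is infinite, so $|G|=|\langle D\rangle|=|D|$, forcing $|D|\geq|G|$ (the finite case being trivial, as $\mathcal P_G$ is then discrete).

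Finally (c). Every character of $G$ is $\mathcal P_G$-continuous, so $(G,\mathcal P_G)^*=\HG$. For a precompact abelian group the weight equals that of its compact completion $\widetilde G$, and Pontryagin duality gives $w(\widetilde G)=|\widehat{\widetilde G}|=|(G,\mathcal P_G)^*|=|\HG|$. It then remains to recall, for $G$ infinite, that $|\HG|=2^{|G|}$: the upper bound $|\HG|\leq\cont^{|G|}=2^{|G|}$ is immediate, and the matching lower bound is the classical computation of the cardinality of the character group of an infinite discrete abelian group. I expect the genuinely substantive inputs to be these two ``black boxes''---the maximality of $\mathcal P_G$ among totally bounded topologies, used throughout (b) and hence in (d) and (e), and the cardinal value $|\HG|=2^{|G|}$ in (c)---while everything else is a short formal deduction; among the formal steps the only non-obvious idea is the passage in (e) from a dense subset to the closed subgroup it generates.
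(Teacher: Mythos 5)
Your items (a), (d), (e), the ideal half of (b), and (c) (via the weight--dual cardinality theorem of Comfort--Ross plus Kakutani's computation $|\HG|=2^{|G|}$) are correct and in fact follow essentially the same route as the paper. But the hereditary half of (b) has a genuine gap. You claim that maximality gives $\mathcal P_G\restriction_H\leq\mathcal P_H$ and that Lemma \ref{functorial-basic}(c) gives ``the reverse inequality''. It does not: Lemma \ref{functorial-basic}(c) states $\mathcal T_H\geq\mathcal T_G\restriction_H$ (because the inclusion $(H,\mathcal T_H)\hookrightarrow(G,\mathcal T_G)$ must be continuous for a functorial topology), which is exactly the \emph{same} inequality $\mathcal P_G\restriction_H\leq\mathcal P_H$ that maximality already gave you. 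So the substantive direction $\mathcal P_H\leq\mathcal P_G\restriction_H$ is never established, and it cannot follow from abstract maximality alone: maximality of $\mathcal P_H$ only says that $\mathcal P_H$ dominates every totally bounded topology on $H$, whereas what you need is that the trace topology $\mathcal P_G\restriction_H$ already dominates $\mathcal P_H$. Note the asymmetry with the quotient case, where your argument is sound precisely because the functorial inequality $(\mathcal P_G)_q\geq\mathcal P_{G/H}$ points \emph{opposite} to the maximality inequality; for subgroups the two inequalities point the same way, and that is exactly why heredity requires an extra idea.

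The missing idea is the one the paper uses, and you even have the needed tool on the table from your proof of (a): injectivity (divisibility) of $\T$. Given a character $\chi$ of $H$ and $0<\varepsilon\leq 1$, extend $\chi$ to a character $\widetilde\chi$ of $G$; then $U_H(\chi,\varepsilon)=H\cap U_G(\widetilde\chi,\varepsilon)$, so every subbasic $\mathcal P_H$-neighborhood of $0$ is the trace on $H$ of a $\mathcal P_G$-neighborhood, which yields $\mathcal P_H\leq\mathcal P_G\restriction_H$ and, combined with the inequality you already have, $\mathcal P_H=\mathcal P_G\restriction_H$. With this patch your proof of the lemma is complete and coincides in substance with the paper's.
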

\begin{proof}
(a) As noted in the introduction, $\mathcal P_G$ is totally bounded. Moreover, since the characters $\Hom(G,\T)$ separate the points of $G$, it follows that $\mathcal P_G$ is Hausdorff.

\smallskip
(b) Let $H$ be a subgroup of $G$ and $\pi:G\to G/H$ the canonical projection. Let $0<\varepsilon\leq1$ and consider $U_H(\chi,\varepsilon)$ for a character $\chi$ of $H$. Since $\T$ is divisible, there exists an extension $\widetilde\chi$ of $\chi$ to $G$. Then $U_H(\chi,\varepsilon)=H\cap U_G(\chi,\varepsilon)$, and this proves that $\mathcal P$ is hereditary. 

To prove that $\mathcal P$ is ideal, note that $\pi:(G,\mathcal P_G)\to (G/H,(\mathcal P_G)_q)$ is open by definition of quotient topology. 
Since $(\mathcal P_G)_q$ is precompact, being the quotient topology of the precompact topology $\mathcal P_G$ (see (b)), and since $\mathcal P_{G/H}$ is the finest precompact topology on $G/H$ (as noted in the introduction), we can conclude that $\mathcal P_{G/H}\geq(\mathcal P_G)_q$. In particular, $id_{G/H}:(G/H,(\mathcal P_G)_q)\to (G/H,\mathcal P_{G/H})$ is open, and hence $\pi:(G,\mathcal P_G)\to (G/H,\mathcal P_{G/H})$ is open being composition of two open endomorphisms. This shows that $\mathcal P$ is ideal.

\smallskip
(c) In view of a theorem by Comfort and Ross \cite{CR}, $w(G,\mathcal P_G) = |\Hom(G,\T)|$; now applying a result by Kakutani \cite{Ka} we have $|\Hom(G,\T)|=2^{|G|}$.

\smallskip
(d) For every subgroup $H$ of $G$, since $\mathcal P$ is ideal by (a), on the quotient $G/H$ we have $(\mathcal P_G)_q=\mathcal P_{G/H}$, which is Hausdorff by (b). Hence, $H$ is $\mathcal P_G$-closed.

\smallskip
(e) If $D$ is a $\mathcal P_G$-dense subset of $G$, then $\langle D\rangle$ is a $\mathcal P_G$-dense subgroup of $G$. By item (b) $\langle D\rangle$ is also $\mathcal P_G$-closed and so $\langle D\rangle =G$. In particular, $|D|=|G|$.
\end{proof}

We compare now the profinite topology with the natural topology and the Bohr topology, starting from the relatively easier relation given by Lemma \ref{P,nu>gamma}.

\begin{lemma}\label{P,nu>gamma} 
In the lattice $\mathfrak Ft$ of all functorial topologies, $\gamma\leq\inf\{\nu,\mathcal P\}$.
\end{lemma}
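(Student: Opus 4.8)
The plan is to exploit that $\inf\{\nu,\mathcal P\}$ is, by definition, the greatest lower bound of $\nu$ and $\mathcal P$ in the lattice $\mathfrak Ft$; hence it suffices to show that $\gamma$ is itself a lower bound of $\{\nu,\mathcal P\}$, i.e.\ to prove the two inequalities $\gamma\leq\nu$ and $\gamma\leq\mathcal P$ separately. (In particular, no computation of the infimum is needed.) Each of these reduces, by the definition of the order on $\mathfrak Ft$, to the pointwise statements $\gamma_G\leq\nu_G$ and $\gamma_G\leq\mathcal P_G$ for an arbitrary abelian group $G$, and since $\gamma_G$ has the finite-index subgroups as a base of the neighborhoods of $0$, in both cases it is enough to check that every finite-index subgroup $N\in\CC(G)$ is a neighborhood of $0$ in the relevant topology.

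For $\gamma_G\leq\nu_G$ the argument is purely arithmetic. Given $N\in\CC(G)$, set $m=[G:N]$. By Lagrange's theorem every element of the finite group $G/N$ has order dividing $m$, so $m(G/N)=0$, that is $mG\subseteq N$. As $mG$ is a basic $\nu_G$-neighborhood of $0$, this exhibits $N$ as a $\nu_G$-neighborhood of $0$, and $\gamma_G\leq\nu_G$ follows.

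For $\gamma_G\leq\mathcal P_G$ I would pass through the dual of the finite quotient. Let $N\in\CC(G)$, let $\pi:G\to G/N$ be the canonical projection, and put $m=|G/N|$. Since $G/N$ is a finite abelian group, its characters separate its points, so one finds finitely many characters $\chi_1,\dots,\chi_k$ of $G/N$ with $\bigcap_{i=1}^k\ker\chi_i=0$; writing $\psi_i=\chi_i\circ\pi\in G^*$, we then have $\bigcap_{i=1}^k\ker\psi_i=N$. Each $\psi_i$ takes values among the $m$-th roots of unity inside $\T$, whose nonzero elements all have norm at least $1/m$. Hence, choosing $0<\varepsilon\leq 1/m$, the condition $\|\psi_i(x)\|<\varepsilon$ forces $\psi_i(x)=0$, i.e.\ $U_G(\psi_i,\varepsilon)\subseteq\ker\psi_i$. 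Intersecting over $i$ gives $\bigcap_{i=1}^k U_G(\psi_i,\varepsilon)\subseteq N$, and the left-hand side is a basic $\mathcal P_G$-neighborhood of $0$; thus $N$ is a $\mathcal P_G$-neighborhood and $\gamma_G\leq\mathcal P_G$.

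The routine steps (Lagrange, and the norm estimate on roots of unity) are easy; the only point requiring a little care, and the place I would double-check, is the reduction for the Bohr topology: the passage from separation of the points of the finite group $G/N$ by its characters to the representation of $N$ as a finite intersection of kernels of characters of $G$ pulled back through $\pi$, together with the correct choice of $\varepsilon$ guaranteeing $U_G(\psi_i,\varepsilon)\subseteq\ker\psi_i$. Once these finitely many characters are in hand, everything follows at once from the explicit subbase $\{U_G(\chi,\varepsilon):0<\varepsilon\leq1,\chi\in G^*\}$ of $\mathcal P_G$.
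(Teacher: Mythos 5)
Your proof is correct, and the two halves have different relationships to the paper's proof. The first half ($\gamma_G\leq\nu_G$ via Lagrange: $[G:N]\,G\subseteq N$ for $N\in\CC(G)$) is exactly the paper's argument. For the Bohr half you take a genuinely different route: the paper simply invokes its Lemma \ref{PG}(d) --- every subgroup of $G$ is $\mathcal P_G$-closed --- and then observes that a closed subgroup of finite index is automatically $\mathcal P_G$-open (its complement is a finite union of cosets, hence closed). Your argument is instead a direct construction: you realize $N$ as $\bigcap_{i}\ker\psi_i$ where the $\psi_i=\chi_i\circ\pi$ are the pullbacks of finitely many characters separating the points of the finite quotient $G/N$, and then trap $N$ from inside by the basic $\mathcal P_G$-neighborhood $\bigcap_i U_G(\psi_i,\varepsilon)$ using the norm estimate $\|k/m+\Z\|\geq 1/m$ for nonzero $m$-th roots of unity. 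Both are valid; what differs is the dependency structure. The paper's route is a one-liner once Lemma \ref{PG}(d) is available, but that lemma in turn rests on the ideal property of $\mathcal P$ and on the maximality of the Bohr topology among totally bounded topologies. Your route is self-contained: it needs only the explicit subbase $\{U_G(\chi,\varepsilon)\}$ of $\mathcal P_G$ and the elementary fact that the characters of a finite abelian group separate its points, so it would go through even before any of Lemma \ref{PG} is established. The two points you flagged for checking are indeed fine: $\bigcap_i\ker\psi_i=\pi^{-1}\bigl(\bigcap_i\ker\chi_i\bigr)=\pi^{-1}(0)=N$, and any $0<\varepsilon\leq 1/m$ forces $U_G(\psi_i,\varepsilon)\subseteq\ker\psi_i$ since the values of $\psi_i$ lie among the $m$-th roots of unity.
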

\begin{proof}
Let $G$ be an abelian group. We have to prove that $\gamma_G\leq\inf\{\nu_G,\mathcal P_G\}$.
Since every finite-index subgroup of $G$ contains a subgroup of the form $mG$, one has always $\gamma_G \leq \nu_G$.
Let $H\in\CC(G)$. Since $H$ has finite index in $G$ and $H$ is $\mathcal P_G$-closed by Lemma \ref{PG}(d), we have that $H$ is $\mathcal P_G$-open. Hence, $\gamma_G\leq\mathcal P_G$.
\end{proof}

The following proposition is a fundamental step for the proof of Theorem \ref{inf=gamma}, which is the main result of this section.

\begin{proposition}\label{gamma=P<->bounded}
Let $G$ be an abelian group. The following conditions are equivalent:
\begin{itemize}
\item[(a)] $\gamma_G=\mathcal P_G$;
\item[(b)] $G$ is bounded;
\item[(c)] $G^*$ is bounded;
\item[(d)] $G^*$ is torsion;
\item[(e)] $\nu_G$ is discrete.
\end{itemize}
\end{proposition}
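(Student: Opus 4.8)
The plan is to prove the equivalences by first establishing the chain $(b)\Leftrightarrow(c)\Leftrightarrow(d)\Leftrightarrow(e)$, which are purely algebraic statements about $G$ and its dual, and then to close the loop by showing $(b)\Leftrightarrow(a)$, where the topological comparison between $\gamma_G$ and $\mathcal P_G$ enters. Recall that $G$ is \emph{bounded} means $nG=0$ for some $n\in\N_+$. First I would handle $(b)\Leftrightarrow(e)$: the natural topology $\nu_G$ is discrete exactly when $\{0\}=mG$ for some $m\in\N_+$, i.e.\ when $mG=0$, which is precisely the boundedness of $G$. This is immediate from the definition of $\nu_G$ having $\{mG:m\in\N_+\}$ as a base of neighborhoods of $0$.

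Next I would treat the duality statements $(b)\Leftrightarrow(c)$ and $(c)\Leftrightarrow(d)$. For $(c)\Leftrightarrow(d)$, note that $G^*=\Hom(G,\T)$ and for an arbitrary abelian group, being torsion and being bounded may differ; however here the intended equivalence should follow because $G^*$ (as a dual of a discrete group) is compact in its natural topology, and a compact abelian group that is torsion must be bounded (a torsion compact group has bounded exponent by the Baire category theorem, since it is the union of the increasing chain $G^*[n!]$ of closed subgroups). For $(b)\Leftrightarrow(c)$, I would use the standard relation $nG=0$ if and only if every character is killed by $n$, i.e.\ $nG^*=0$; more precisely $\Hom(G,\T)[n]=\Hom(G/nG,\T)$ and $\Hom(nG,\T)$ quotients appropriately, so boundedness of $G$ by $n$ transfers to boundedness of $G^*$ by $n$ and vice versa via Pontryagin duality applied to the finite-exponent relation.

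Finally, the main content is $(a)\Leftrightarrow(b)$. For $(b)\Rightarrow(a)$, suppose $nG=0$. Then every character $\chi\in G^*$ takes values in $\T[n]=\frac1n\Z/\Z$, a finite subgroup of $\T$, so each subbasic neighborhood $U_G(\chi,\varepsilon)=\{x:\|\chi(x)\|<\varepsilon\}$ equals the kernel-type set $\ker\chi$ for small $\varepsilon$, which is a finite-index subgroup of $G$; hence the Bohr subbase consists of $\gamma_G$-open subgroups, giving $\mathcal P_G\leq\gamma_G$, and combined with the always-valid $\gamma_G\leq\mathcal P_G$ from Lemma \ref{P,nu>gamma} we get equality. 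The hard direction, and the step I expect to be the main obstacle, is $(a)\Rightarrow(b)$, i.e.\ showing that $\gamma_G=\mathcal P_G$ forces $G$ to be bounded. Here I would argue by contraposition: if $G$ is unbounded, I must produce a $\mathcal P_G$-neighborhood of $0$ containing no finite-index subgroup. The natural approach is to exhibit a character $\chi$ of \emph{infinite order} in $G^*$ (which exists precisely when $G$ is unbounded, by the already-established $(b)\Leftrightarrow(d)$), and then observe that the single subbasic set $U_G(\chi,\tfrac12)$ cannot contain any finite-index subgroup $N$: indeed if $N\subseteq U_G(\chi,\tfrac12)$ then $\chi(N)$ is a subgroup of $\T$ contained in $\{t:\|t\|<\tfrac12\}$, forcing $\chi(N)=0$, so $N\subseteq\ker\chi$; but then $\chi$ factors through the finite group $G/N$ and has finite order, contradicting the choice of $\chi$. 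This shows $U_G(\chi,\tfrac12)$ is a $\mathcal P_G$-neighborhood that is not $\gamma_G$-open, so $\gamma_G\neq\mathcal P_G$, completing the contrapositive.
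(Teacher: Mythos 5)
Your overall architecture is sound, and after unwinding the contrapositive your key direction (a)$\Rightarrow$(b) is essentially the paper's proof of (a)$\Rightarrow$(d); likewise your remaining steps ((b)$\Leftrightarrow$(e) from the definition of $\nu_G$, the Baire-category argument showing a torsion compact group $G^*$ is bounded, and the transfer $nG=0\Leftrightarrow nG^*=0$) all match steps in the paper's proof. However, there is one concrete error that makes your crucial step fail as written: you claim that if $N\subseteq U_G(\chi,\tfrac12)$, then $\chi(N)$, being a subgroup of $\T$ contained in $\{t\in\T:\|t\|<\tfrac12\}$, must be trivial. This is false. With the norm $\|r+\Z\|=\min\{d(r,m):m\in\Z\}$, the set $\{t\in\T:\|t\|<\tfrac12\}$ is all of $\T$ minus the single point $\tfrac12+\Z$, and it contains many nontrivial subgroups: for instance the cyclic subgroup $\{0,\tfrac13+\Z,\tfrac23+\Z\}\cong\Z(3)$, and even dense ones, such as the cyclic subgroup generated by $\alpha+\Z$ for irrational $\alpha$ (its elements never equal $\tfrac12+\Z$). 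So from $N\subseteq U_G(\chi,\tfrac12)$ you cannot conclude $N\subseteq\ker\chi$, and the contradiction with $\chi$ having infinite order does not follow.

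The repair is small and standard: replace $\tfrac12$ by $\tfrac14$. The only subgroup of $\T$ contained in $\{t\in\T:\|t\|<\tfrac14\}$ is $\{0\}$: if $\|h\|=r$ with $0<r\le\tfrac14$, then repeated doubling (which doubles the norm as long as it stays $\le\tfrac14$) produces a multiple $2^kh$ with $\|2^kh\|=2^kr\in(\tfrac14,\tfrac12]$, which escapes the set. This is precisely why the paper's proof of (a)$\Rightarrow$(d) works with the neighborhood $U_G(\chi,\tfrac14)$: from $N\subseteq U_G(\chi,\tfrac14)$ one legitimately gets $\chi(N)=0$, i.e.\ $N\subseteq\ker\chi$, and then your factoring argument ($\chi$ factors through the finite group $G/N$, hence is torsion) goes through verbatim. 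With this one constant corrected your proof is complete. The only other soft spot is the vagueness in your (b)$\Leftrightarrow$(c) (``Pontryagin duality applied to the finite-exponent relation''): the direction $nG^*=0\Rightarrow nG=0$ should simply invoke the fact that the characters of a discrete abelian group separate its points, exactly as the paper does in its proof of (c)$\Rightarrow$(b).
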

\begin{proof}
The implications (b)$\Rightarrow$(c)$\Rightarrow$(d) and the equivalence (b)$\Leftrightarrow$(e) are obvious.

\smallskip
(a)$\Rightarrow$(d)  By the assumption, for every $\chi\in G^*$, the basic neighborhood $U_G(\chi,1/4)$ contains some $N\in\CC(G)$. Then $N\subseteq \ker\chi$, and so $\ker\chi\in\CC(G)$. Therefore, $mG \subseteq \ker \chi$ for some $m\in \N_+$, i.e., $\chi$ is torsion. Hence, $G^*$ is torsion.

\smallskip
(d)$\Rightarrow$(c) For every $n\in \N_+$ the subgroup $F_n = G^*[n]$ of $G^*$ is closed and $G^* = \bigcup_{n\in\N_+} F_n$ by our hypothesis (d). Since $G^*$ is a compact abelian group, it satisfies the Baire category theorem. Thus, there exists $n\in\N_+$ such that $F_n$ has non-empty interior, hence $F_n$ is open. Since $G^*$ is compact, $F_n$ must have finite index in $G^*$. Therefore, there exists $m \in \N_+$  such that $mG^* \subseteq F_n$, so $mn G^* = 0$, i.e., $G^*$ is bounded.

\smallskip
(c)$\Rightarrow$(b) Assume that $n G^*=0$ for some $n\in\N_+$. To show that $nG=0$, pick an element $x\in G$. Then $\chi(nx)=0$ for every $\chi\in G^*$. It follows that $nx=0$, as it is a well-known fact that the characters of a discrete abelian group separate the points of the group. Hence $nG=0$. 

\smallskip
(d)$\Rightarrow$(a) Let $\chi\in G^*$ and $0<\varepsilon\leq1$. Then $U_G(\chi,\varepsilon)$ contains $\ker\chi$. Since $G^*$ is torsion, there exists $m\in\N_+$ such that $m\chi=0$, that is, $m\chi(G)=0$. Therefore,  $G/\ker\chi\cong \chi(G)$ is finite, so $\ker\chi$ has finite index in $G$. Hence, $\ker\chi$ is open in $(G,\gamma_G)$. Since $\chi\in G^*$ was chosen arbitrarily, this shows that $\mathcal P_G\leq\gamma_G$. Lemma \ref{P,nu>gamma} applies to conclude that $\gamma_G=\mathcal P_G$.
\end{proof}

The following corollary of Proposition \ref{gamma=P<->bounded} shows that the weight of the profinite topology of a bounded abelian group $G$ has the maximal possible value $2^{|G|}$. 

\begin{corollary}\label{gamma-nonmetr}
If $G$ is an infinite bounded abelian group, then $w(G,\gamma_G)=2^{|G|}$. In particular, $(G,\gamma_G)$ is non-metrizable. More precisely, it does not contain infinite compact sets (so in particular, no convergent non-trivial sequences).
\end{corollary}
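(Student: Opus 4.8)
The plan is to separate the two assertions. The weight computation and the non-metrizability are immediate from what precedes: since $G$ is bounded, Proposition~\ref{gamma=P<->bounded} gives $\gamma_G=\mathcal P_G$, whence $w(G,\gamma_G)=w(G,\mathcal P_G)=2^{|G|}$ by Lemma~\ref{PG}(c). For infinite $G$ this cardinal is at least $2^{\aleph_0}>\aleph_0$; as $(G,\gamma_G)$ is totally bounded (Fact~\ref{profin}(a)), its character equals its weight, so $\chi(G,\gamma_G)=2^{|G|}$ is uncountable. Hence $(G,\gamma_G)$ is not first countable, and in particular not metrizable.

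For the stronger claim I would realize $(G,\gamma_G)$ concretely. A bounded group is a direct sum $G=\bigoplus_{i\in I}C_i$ of finite cyclic groups whose orders divide the exponent $n$ of $G$, and every character takes its values in the order-$n$ subgroup $\Z(n)\subseteq\T$. Thus the evaluation map $e\colon G\to P:=\prod_{\chi\in G^*}\Z(n)$, $x\mapsto(\chi(x))_{\chi}$, is injective and a topological embedding of $(G,\mathcal P_G)=(G,\gamma_G)$ into the compact group $P$ (its coordinate maps are exactly the characters, so the subspace topology is the initial one, i.e.\ $\mathcal P_G$). Consequently a subset $K\subseteq G$ is $\gamma_G$-compact if and only if $e(K)$ is closed in $P$, so it suffices to show that for every infinite $K$ the closure of $e(K)$ in $P$ contains a point outside $e(G)$ (a fortiori outside $e(K)$). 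Since such a point is already produced by countably many elements, I may assume $K=\{x_m:m\in\N\}$ is a set of distinct elements indexed by $\N$.

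Next I would fix a non-principal ultrafilter $\mathcal U$ on $\N$ and put $w_i=\lim_{\mathcal U}(x_m)_i\in C_i$ (limit over $m$); the ultrafilter limit $f=\lim_{\mathcal U}e(x_m)$ lies in the closure of $e(K)$ and satisfies $f(\pi_i)=w_i$ for the coordinate characters $\pi_i\in G^*$. If $(w_i)_{i\in I}$ has infinite support, then $f\neq e(z)$ for every $z\in G$, because such a $z$ has finite support while $e(z)(\pi_i)=z_i$; so $f$ is the desired point. Otherwise $(w_i)_i$ is the coordinate vector of some $\tilde w\in G$, and replacing each $x_m$ by $x_m-\tilde w$ (which keeps them distinct and all but one nonzero) reduces to the case $\lim_{\mathcal U}(x_m)_i=0$ for every $i$. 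Then each coordinate vanishes on a $\mathcal U$-large set, so a greedy choice yields a subsequence $(x_{m_k})_k$ with pairwise disjoint supports; picking on each support a character with $\chi(x_{m_k})\neq0$ and amalgamating them (legitimate by disjointness) gives a single $\chi\in G^*$ with $\chi(x_{m_k})\neq0$ for all $k$. Taking a value $\neq0$ attained infinitely often and the ultrafilter limit $f'$ along the corresponding set, one gets $f'(\chi)\neq0$ whereas $f'(\pi_i)=0$ for every $i$ (disjoint supports kill each coordinate in the limit); hence $f'\neq e(z)$ for all $z\in G$, and $f'$ is the required accumulation point outside $e(G)$. This forbids infinite compact sets, and the final parenthetical remark (a nontrivial convergent sequence together with its limit being an infinite compact set) is then automatic.

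The hard part will be exactly this last extraction. The naive strategy---thin $K$ out to a $\Delta$-system and detect each petal by a character---breaks down, since the supports of the $x_m$ need not have bounded size and a nested family admits no infinite $\Delta$-subsystem. The ultrafilter dichotomy is what repairs this: the infinite-support case absorbs precisely the nested/chain behaviour, while in the complementary case the hypothesis that every coordinate is $\mathcal U$-almost-everywhere zero is exactly what makes the disjointification of supports possible. Alternatively, the absence of infinite compact subsets in the Bohr topology of an infinite bounded abelian group is available in the literature on $G^{\#}$ and could simply be invoked once $\gamma_G=\mathcal P_G$ has been established.
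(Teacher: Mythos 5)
Your proof is correct, and its first half is exactly the paper's argument: $\gamma_G=\mathcal P_G$ by Proposition \ref{gamma=P<->bounded}, then $w(G,\gamma_G)=w(G,\mathcal P_G)=2^{|G|}$ by Lemma \ref{PG}(c) (the paper's proof text cites item (b), but (c) is the weight statement, so you in fact cite the right item), and non-metrizability from $\chi=w>\omega$ for totally bounded groups. Where you genuinely depart from the paper is the claim about infinite compact sets: the paper disposes of it in one line by invoking Glicksberg's theorem \cite{Glick}, that the Bohr topology of a discrete abelian group admits no infinite compact subsets, whereas you give a complete, self-contained proof of this fact for bounded groups. Your construction --- embed $(G,\mathcal P_G)$ into the compact group $P=\prod_{\chi\in G^*}\Z(n)$, reduce to a countable set, and produce a closure point of $e(K)$ lying outside $e(G)$ via the ultrafilter dichotomy (either the coordinatewise $\mathcal U$-limit has infinite support, or after translating by $\tilde w$ one greedily extracts a subsequence with pairwise disjoint supports, detects it by an amalgamated character, and takes a limit along a subset where that character is constant and nonzero) --- is sound at every step; in particular, insisting on a closure point outside $e(G)$ rather than merely outside $e(K)$ is exactly what makes the passages to countable subsets and to subsequences legitimate, and the greedy disjointification works because the set of indices whose support avoids any fixed finite set of coordinates is a finite intersection of $\mathcal U$-large sets, hence infinite. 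The trade-off is clear: the paper's citation is instant and covers arbitrary abelian groups, but rests on a nontrivial external theorem; your argument is longer but elementary (finite combinatorics plus ultrafilter limits in a compact product) and actually reproves the bounded case of Glicksberg's theorem --- which, as you observe in your closing sentence, could alternatively be cited once $\gamma_G=\mathcal P_G$ is established, and that is precisely the paper's choice.
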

\begin{proof}
By Lemma \ref{PG}(b), $w(G,\mathcal P_G)=2^{|G|}$.
Moreover, $\gamma_G=\mathcal P_G$ by Proposition \ref{gamma=P<->bounded}. To conclude, recall that the Bohr topology admits no infinite compact sets (see \cite{Glick}).
\end{proof}
 
We have seen in Proposition \ref{gamma=P<->bounded} that for bounded abelian groups the profinite topology coincides with the Bohr topology. In particular, this means that the profinite topology is the infimum of the Bohr topology and the natural topology, since $\mathcal P_G= \gamma_G\leq\nu_G=\delta_G$ for any bounded abelian group $G$.
The next theorem shows that the equality $\gamma_G=\inf\{\nu_G,\mathcal P_G\}$ holds for every abelian group $G$.

\begin{theorem}\label{inf=gamma}
In the lattice $\mathfrak Ft$ of all functorial topologies, $\gamma=\inf\{\nu,\mathcal P\}$.
\end{theorem}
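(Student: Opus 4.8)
The plan is to deduce the functorial identity from a single pointwise inequality and then feed a bounded quotient into Proposition~\ref{gamma=P<->bounded}. By Lemma~\ref{P,nu>gamma}, $\gamma$ is a lower bound of $\{\nu,\mathcal P\}$ in $\mathfrak Ft$, so it remains only to show it is the \emph{greatest} one. Let $\mathcal T\in\mathfrak Ft$ be any lower bound, so that $\mathcal T_G\le\nu_G$ and $\mathcal T_G\le\mathcal P_G$ for every abelian group $G$; then $\mathcal T_G\le\inf\{\nu_G,\mathcal P_G\}$ in the lattice of all group topologies on $G$. Hence, once I prove the pointwise inequality $\inf\{\nu_G,\mathcal P_G\}\le\gamma_G$ for every $G$, I will get $\mathcal T_G\le\gamma_G$ for every $G$, i.e. $\mathcal T\le\gamma$, and therefore $\gamma=\inf\{\nu,\mathcal P\}$. (This also yields the pointwise equality $\gamma_G=\inf\{\nu_G,\mathcal P_G\}$, combining with Lemma~\ref{P,nu>gamma}.)

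To attack $\inf\{\nu_G,\mathcal P_G\}\le\gamma_G$ I would first record the standard description of the meet of two group topologies, since for group topologies the meet need not coincide with the set-theoretic intersection of the two topologies. Concretely, $\inf\{\nu_G,\mathcal P_G\}$ admits as a local base at $0$ the family of sets $mG+U$, with $m\in\N_+$ (so that $mG$ ranges over a base of $\nu_G$-neighborhoods of $0$) and $U$ ranging over a base of $\mathcal P_G$-neighborhoods of $0$. One checks routinely that $\{mG+U\}$ is a neighborhood base of a group topology (choosing $U'+U'\subseteq U$ and using commutativity gives $(mG+U')+(mG+U')\subseteq mG+U$), that this topology is coarser than both $\nu_G$ and $\mathcal P_G$ (because $0\in mG$ and $0\in U$, so each $mG+U$ contains a neighborhood of either), and that any group topology coarser than both is coarser than it; hence it is the infimum. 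It therefore suffices to show that each basic neighborhood $mG+U$ is a $\gamma_G$-neighborhood of $0$, that is, contains a finite-index subgroup of $G$.

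Here is where the bounded quotient enters. Fix $m\in\N_+$ and a $\mathcal P_G$-neighborhood $U$ of $0$, and let $\pi\colon G\to\bar G:=G/mG$ be the canonical projection. The group $\bar G$ is bounded, being annihilated by $m$. Since $\mathcal P$ is ideal by Lemma~\ref{PG}(b), the map $\pi\colon(G,\mathcal P_G)\to(\bar G,\mathcal P_{\bar G})$ is open, so $\pi(U)$ is a $\mathcal P_{\bar G}$-neighborhood of $0$. Now Proposition~\ref{gamma=P<->bounded}, applied to the bounded group $\bar G$, gives $\mathcal P_{\bar G}=\gamma_{\bar G}$, so $\pi(U)$ contains a finite-index subgroup $\bar N$ of $\bar G$. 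Setting $N:=\pi^{-1}(\bar N)$, we get $G/N\cong\bar G/\bar N$ finite, so $N\in\CC(G)$, and $N=\pi^{-1}(\bar N)\subseteq\pi^{-1}(\pi(U))=U+mG=mG+U$. Thus $mG+U$ contains the finite-index subgroup $N$, as needed, and the pointwise inequality follows.

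The genuinely delicate points are the explicit local base $\{mG+U\}$ for the meet (routine, but it must be stated rather than taken for granted) and the reduction to the bounded quotient $G/mG$. This reduction is the heart of the argument: killing the $\nu_G$-datum $mG$ collapses $G$ onto a bounded group, and on bounded groups the a priori different topologies $\mathcal P$ and $\gamma$ coincide by Proposition~\ref{gamma=P<->bounded}, which is precisely what converts a mixed neighborhood $mG+U$ into an honest finite-index subgroup.
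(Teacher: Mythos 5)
Your proof is correct and follows essentially the same route as the paper's: both establish the inequality $\inf\{\nu_G,\mathcal P_G\}\le\gamma_G$ by passing to the bounded quotient $G/mG$, using that $\mathcal P$ is ideal (Lemma~\ref{PG}(b)) to push a Bohr neighborhood forward, invoking Proposition~\ref{gamma=P<->bounded} on the bounded quotient, and pulling back a finite-index subgroup inside $\pi^{-1}(\pi(U))=U+mG$. Your extra care in describing the local base $\{mG+U\}$ of the meet and in reducing the lattice statement in $\mathfrak Ft$ to the pointwise one is detail the paper leaves implicit, not a different argument.
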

\begin{proof} 
Let $G$ be an abelian group. We have to prove that $\gamma_G=\inf\{\nu_G,\mathcal P_G\}$.

Lemma \ref{P,nu>gamma} gives the inequality $\gamma_G\leq\inf\{\nu_G,\mathcal P_G\}$.

Let now $U$ be a neighborhood of $0$ in $\mathcal P_G$ and $n\in\N_+$. We prove that the typical neighborhood $U+ nG$ of $0$ in $\inf\{\mathcal P_G,\nu_G\}$ is a neighborhood of $0$ in $\gamma_G$ too. To this end consider the canonical projection $\pi:G\to G/nG$. Since $G/nG$ is bounded, $\gamma_{G/nG}=\mathcal P_{G/nG}$ by Proposition \ref{gamma=P<->bounded}. Moreover, $\pi:(G,\mathcal P_G)\to (G/nG,\mathcal P_{G/nG})$ is open, as the Bohr topology is ideal by Lemma \ref{PG}(b). Therefore, $\pi(U)\in\mathcal P_{G/nG}=\gamma_{G/nG}$. Then $H_1\subseteq \pi(U)$ for some $H_1\in\CC(G/nG)$. Consequently $H=\pi^{-1}(H_1)\in\CC(G)$ and in particular $H$ is a neighborhood of $0$ in $\gamma_G$. Since $U+nG=\pi^{-1}(\pi(U))$, it follows that $H\subseteq U+nG$, which proves that $U+nG$ is a neighborhood of $0$ in $(G,\gamma_G)$. This concludes the proof.
\end{proof}

\begin{remark}\label{vD}
\begin{itemize}
\item[(a)] If $G$ and $H$ are infinite bounded abelian groups, then for every continuous map $f: (G,\gamma_G)\to (H,\gamma_H)$ with $f(0)=0$ there exist a {\em homomorphism} $\phi: G \to H$ and an infinite subset $A$ of $G$ containing $0$, such that $f\restriction_A = \phi\restriction_A$. In particular, if $G$ is of exponent $p$ and $H$ is of exponent $q$, where $p$ and $q$ are distinct primes, then there exists no homeomorphisms between $(G,\gamma_G)$ and $(H,\gamma_H)$, considered as topological spaces \cite{DW,kunen}. 
\item[(b)] Item (a) should be compared with the fact that two countable metrizable abelian groups are always homeomorphic considered as {\em topological spaces}; for example, the $p$-adic and the $q$-adic topologies on $\Z$ are homeomorphic. Also the compact spaces $\J_p$ and $\J_q$, provided with their natural topology, are homeomorphic (to the Cantor cube $\{0,1\}^\omega$).
Let us note that in spite of this homeomorphism,  there exists no non-zero homomorphism $\J_p \to \J_q$ (this should be compared to (a)). 
\end{itemize}
\end{remark}

\section{The poset of finite-index subgroups}\label{C(G)}

This section is dedicated to the general problem of studying the poset of finite-index subgroups of an abelian group.
This problem is motivated by the definition of the adjoint algebraic entropy  of an endomorphism $\phi$ of an abelian group $G$, where the family $\CC(G)$ of all finite-index subgroups of $G$ plays a prominent role. Now we recall the precise definition of adjoint algebraic entropy. For $N\in\CC(G)$ and $n\in\N_+$, the \emph{$n$-th $\phi$-cotrajectory} of $N$ is 
$$C_n(\phi,N)=\frac{G}{N\cap\phi^{-1}(N)\cap\ldots\cap\phi^{-n+1}(N)}.$$
The \emph{adjoint algebraic entropy of $\phi$ with respect to $N$} is 
\begin{equation*}\label{H*}
H^\star(\phi,N)={\lim_{n\to \infty}\frac{\log|C_n(\phi,N)|}{n}};
\end{equation*}
This limit exists and it is finite \cite{DGS}. The \emph{adjoint algebraic entropy} of $\phi:G\to G$ is $$\aent(\phi)=\sup\{H^\star(\phi,N): N\in\CC(G)\}.$$

In \cite[Theorem 7.4]{DGS} a dichotomy for the values of the adjoint algebraic entropy is proved; indeed, it can take only the values $0$ and $\infty$.
Moreover, \cite{SZ} is dedicated to the characterization of abelian groups of zero adjoint algebraic entropy and \cite{AGB} to the connection of the adjoint algebraic entropy with the topological entropy.

\subsection{Size and cofinality of $\mathcal C(G)$}

The poset $\CC(G)$ of finite-index subgroups of an abelian group $G$ is trivial if and only if $G$ is divisible. Moreover,
Lemma \ref{almost-divisible} describes the case when $\CC(G)$ is finite; the abelian groups with this property are those that we have called almost divisible.
The next result from \cite{DGS} characterizes narrow abelian groups, that are the abelian groups $G$ with countable $\CC(G)$.

\begin{theorem}\label{TOP:narrow} \label{narrow:char}\emph{\cite[Theorem 3.3]{DGS}}
Let $G$ be an abelian group. Then the following conditions are equivalent: 
\begin{itemize}
\item[(a)] $G$ is narrow;
\item[(b)] $|\CC(G)|<\mathfrak c$;
\item[(c)] $G/pG$ is finite for every prime $p$;
\item[(d)] $G/mG$ is finite for every $m\in\N_+$;
\item[(e)] $\CC(G)$ contains a countable decreasing cofinal chain;
\item[(f)] $\gamma_G=\nu_G$.
\end{itemize}
\end{theorem}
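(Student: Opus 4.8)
The plan is to prove the six conditions equivalent through the single cycle
\[(a)\Rightarrow(b)\Rightarrow(c)\Rightarrow(d)\Rightarrow(f)\Rightarrow(e)\Rightarrow(a),\]
recording also that the reverse implication $(d)\Rightarrow(c)$ is trivial (take $m=p$). Several links are routine. The implication $(a)\Rightarrow(b)$ is immediate, since $\aleph_0<\cont$. For $(e)\Rightarrow(a)$, let $\{N_k\}_{k\in\N}$ be a decreasing cofinal chain in $\CC(G)$: every $N\in\CC(G)$ contains some $N_k$, and since $G/N_k$ is finite there are only finitely many subgroups of $G$ lying between $N_k$ and $G$; hence $\CC(G)$ is the union of countably many finite sets and is therefore countable, i.e.\ $G$ is narrow.

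For the arithmetic links I would first handle $(c)\Leftrightarrow(d)$. Granting $(c)$ and a prime power $p^a$, multiplication by $p^{i}$ yields a surjection $G/pG\twoheadrightarrow p^{i}G/p^{i+1}G$ for each $i$, so every factor of the finite filtration of $G/p^aG$ is finite and thus $G/p^aG$ is finite; writing $m=p_1^{a_1}\cdots p_k^{a_k}$ and using that the $p_i^{a_i}$ are pairwise coprime, a Chinese-remainder argument gives $mG=\bigcap_i p_i^{a_i}G$, whence $G/mG$ embeds into $\prod_i G/p_i^{a_i}G$ and is finite, proving $(d)$. For $(d)\Rightarrow(f)$: condition $(d)$ says precisely that every member $mG$ of the canonical base of $\nu_G$ lies in $\CC(G)$, so each is $\gamma_G$-open and hence $\nu_G\leq\gamma_G$; combined with $\gamma_G\leq\nu_G$ from Lemma \ref{P,nu>gamma} this yields $\gamma_G=\nu_G$. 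For $(f)\Rightarrow(e)$: under $\gamma_G=\nu_G$ each $mG$ is $\gamma_G$-open, hence of finite index, so $\{n!\,G:n\in\N\}$ is a decreasing chain in $\CC(G)$; it is cofinal because every $N\in\CC(G)$ is $\nu_G$-open and thus contains some $mG\supseteq n!\,G$ for $n\geq m$.

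The crux of the theorem — and the source of the dichotomy announced before the statement — is $(b)\Rightarrow(c)$, which I would prove by contraposition. If $G/pG$ is infinite for some prime $p$, then it is an infinite-dimensional vector space $V$ over $\mathbb F_p$. The finite-index subgroups of $G$ containing $pG$ correspond bijectively to the finite-codimension subspaces of $V$; in particular the hyperplanes of $V$, all of index $p$, already furnish at least $\cont$ distinct elements of $\CC(G)$, since they are the kernels of the nonzero functionals in $V^*=\Hom(V,\mathbb F_p)$ taken up to scalars, and $|V^*|=p^{\dim V}\geq p^{\aleph_0}=\cont$. Hence $|\CC(G)|\geq\cont$, contradicting $(b)$. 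The hardest part is exactly this cardinality jump: one must see that a single infinite $\mathbb F_p$-quotient forces the full continuum of finite-index subgroups, which is what excludes any intermediate cardinality for $\CC(G)$ and closes the gap between the countable case $(a)$ and the bound $(b)$.
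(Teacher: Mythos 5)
Your proof is correct in every step, so let me first record that and then situate it relative to the paper. Note that the paper does not actually prove Theorem \ref{narrow:char}: it quotes it verbatim from \cite[Theorem 3.3]{DGS}, so there is no in-paper argument to compare against, and your proposal fills a genuine gap for the reader. Checking your cycle: (a)$\Rightarrow$(b) and (e)$\Rightarrow$(a) are as routine as you claim (for the latter, the subgroups of $G$ containing a fixed $N_k\in\CC(G)$ biject with the subgroups of the finite group $G/N_k$, so cofinality of the chain makes $\CC(G)$ a countable union of finite sets). In (c)$\Rightarrow$(d), the surjections $G/pG\twoheadrightarrow p^iG/p^{i+1}G$ and the Bézout/Chinese-remainder identity $mG=\bigcap_i p_i^{a_i}G$ (for pairwise coprime $p_i^{a_i}$) are both valid, and the diagonal embedding $G/mG\hookrightarrow\prod_i G/p_i^{a_i}G$ finishes it. In (d)$\Rightarrow$(f) you correctly invoke the easy inequality $\gamma_G\leq\nu_G$ from Lemma \ref{P,nu>gamma}, and in (f)$\Rightarrow$(e) the chain $\{n!\,G\}_{n\in\N}$ works because under (f) every $mG$ is $\gamma_G$-open and hence of finite index, while every $N\in\CC(G)$ is $\nu_G$-open and so contains some $mG\supseteq n!\,G$. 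The crux (b)$\Rightarrow$(c) is also sound: if $V=G/pG$ is infinite, the hyperplanes of $V$ are kernels of nonzero elements of $\Hom(V,\mathbb F_p)$ up to scalar, and $|\Hom(V,\mathbb F_p)|=p^{\dim V}\geq\cont$, so their preimages give at least $\cont$ members of $\CC(G)$. It is worth pointing out that the machinery the paper develops afterwards gives an alternative, duality-based route to exactly this cardinality jump: Lemma \ref{profin*} identifies $|\CC(G)|$ with $|t(\Hom(G,\T))|$, and Theorem \ref{WEIGHT} yields $|\CC(G)|=\omega\cdot\sup\{2^{|G/pG|}:p\ \text{prime}\}$, of which your hyperplane count is the elementary core (the $2^{|G/pG|}$ many finite-index subgroups above $pG$). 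Your approach buys self-containedness and avoids compact-group/Pontryagin arguments entirely; the paper's later viewpoint buys the stronger quantitative formula for $|\CC(G)|$, not just the dichotomy.
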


Note that (e) is equivalent to pseudometrizability of $(G,\gamma_G)$. The remarkable dichotomy hidden behind the equivalence between (d) and (f) (i.e., $\CC(G)$ is either countable or has size at least $\cont$), discovered in \cite[Theorem 3.3]{DGS} becomes clear below. It is due to the fact that $|\CC(G)|$ coincides with the cardinality of the torsion
part of a compact abelian group (namely, $|\CC(G)| = |t(\Hom(G,\T))|$). 

\medskip
The next lemma plays a key role in the proofs of the results of this section. 

\begin{lemma}\label{profin*} 
Let $G$ be an abelian group that is not almost divisible. Then:
\begin{itemize}
\item[(a)] $(G,\nu_G)^*= (G,\gamma_G)^*=t(\Hom(G,\T))=t(\Hom(G/G^1,\T))$;
\item[(b)] $|\CC(G)| =w(G,\gamma_G)=\chi(G,\gamma_G)=|t(\Hom(G,\T))|$. 
\end{itemize}
\end{lemma}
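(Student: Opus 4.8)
The plan is to prove (a) first, by identifying the \emph{continuous} characters, and then deduce (b) from (a) together with a cardinality count for $\CC(G)$ and the coincidence $w=\chi$ for totally bounded groups. The decisive elementary input for (a) is that $\T$ has no small subgroups: the set $V=\{t\in\T:\|t\|<1/4\}$ contains no nontrivial subgroup (if $0\neq s\in V$, choosing the least $k$ with $\|ks\|\geq 1/4$ gives $1/4\leq\|ks\|<1/2$ by the triangle inequality, so $ks\notin V$). A character $\chi\colon G\to\T$ is $\gamma_G$-continuous exactly when $\chi^{-1}(V)$ contains some $N\in\CC(G)$; since $\chi(N)$ is then a subgroup of $\T$ inside $V$, this forces $\chi(N)=0$, i.e.\ $\ker\chi\supseteq N$ has finite index, i.e.\ $\chi(G)\cong G/\ker\chi$ is a finite subgroup of $\T$, i.e.\ $\chi$ is torsion. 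Running the same argument with the basic neighborhoods $mG$ shows that $\chi$ is $\nu_G$-continuous iff $m\chi=0$ for some $m$, again iff $\chi$ is torsion. Hence $(G,\nu_G)^*=(G,\gamma_G)^*=t(\Hom(G,\T))$. Finally, a torsion character has finite-index kernel, which by \eqref{G^1} contains $G^1=\bigcap_{N\in\CC(G)}N$, so every such $\chi$ factors through $G/G^1$; conversely $\pi^*\colon\Hom(G/G^1,\T)\to\Hom(G,\T)$ induced by $\pi\colon G\to G/G^1$ is injective and preserves the order of elements, so it restricts to an isomorphism $t(\Hom(G/G^1,\T))\cong t(\Hom(G,\T))$, giving the last equality in (a).

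For (b), set $A=t(\Hom(G,\T))$, the common value of the dual groups by (a). I would first establish $|\CC(G)|=|A|$ through the annihilator correspondence $N\mapsto N^\perp=\{\chi\in\Hom(G,\T):\chi(N)=0\}$. For $N\in\CC(G)$ one has $N^\perp\cong\Hom(G/N,\T)\cong G/N$, a \emph{finite} subgroup of $A$; conversely a finite subgroup $S\leq A$ yields $S_\perp=\bigcap_{\chi\in S}\ker\chi\in\CC(G)$, a finite intersection of finite-index subgroups. Since the characters of the finite group $G/N$ separate its points one gets $(N^\perp)_\perp=N$, and finite Pontryagin duality gives $(S_\perp)^\perp=S$, so $N\mapsto N^\perp$ is a bijection between $\CC(G)$ and the set of finite subgroups of $A$. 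As $G$ is not almost divisible, $G/G^1$ is infinite by Lemma \ref{almost-divisible}, whence $A$ is infinite and has exactly $|A|$ finite subgroups; therefore $|\CC(G)|=|A|$.

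It remains to match these cardinals with the topological invariants. Since $(G,\gamma_G)$ is totally bounded (Fact \ref{profin}(a)), $w(G,\gamma_G)=\chi(G,\gamma_G)$ by \cite{C}. Because $\CC(G)$ is a local base at $0$, we have $\chi(G,\gamma_G)\leq|\CC(G)|$. For the reverse inequality I would choose a local base $\mathcal N\subseteq\CC(G)$ with $|\mathcal N|=\chi(G,\gamma_G)$, obtained by replacing each member of a minimal local base by a finite-index subgroup it contains. Then every $N\in\CC(G)$ contains some $M\in\mathcal N$, and for each fixed $M$ there are only finitely many $N\supseteq M$, since $G/M$ is finite; thus $\CC(G)$ is covered by $|\mathcal N|$ finite sets. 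As $|\CC(G)|$ is infinite (again by Lemma \ref{almost-divisible}), this yields $|\CC(G)|\leq|\mathcal N|=\chi(G,\gamma_G)$. Combining everything, $w(G,\gamma_G)=\chi(G,\gamma_G)=|\CC(G)|=|A|=|t(\Hom(G,\T))|$.

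The step I expect to be the main obstacle is the cardinality equality $|\CC(G)|=|A|$: making the annihilator map into a genuine bijection with the finite subgroups of $A$ relies on finite Pontryagin duality and on the fact, from (a), that continuity amounts to being torsion; moreover the not-almost-divisible hypothesis must be used at exactly the right moment to guarantee that $A$ is infinite, so that its finite subgroups number precisely $|A|$. The purely topological identification $\chi(G,\gamma_G)=|\CC(G)|$ is more routine, but it still hinges on the finite-fiber counting made possible by the finiteness of each quotient $G/M$.
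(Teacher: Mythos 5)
Your argument is correct, and its two halves compare differently with the paper. Part (a) is essentially the paper's own proof: both hinge on the fact that $\T$ has no small subgroups, so that a character is $\gamma_G$- (respectively $\nu_G$-) continuous exactly when it annihilates some finite-index subgroup (respectively some $mG$), i.e.\ exactly when it is torsion, and the factorization through $G/G^1$ is handled the same way. Part (b), however, takes a genuinely different route. The paper passes to the Hausdorff reflection $G/G^1$, uses precompactness and the Comfort--Ross formula $w(L)=|L^*|$ for precompact groups \cite{CR} to get $w(G,\gamma_G)=|t(\Hom(G,\T))|$, and then proves $|\CC(G)|=|t(\Hom(G,\T))|$ by restricting to the subfamily $\CC_c(G)$ of subgroups with cyclic quotient and showing that the kernel map $\chi\mapsto\ker\chi$ is finitely-many-to-one. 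You avoid the duality-theoretic weight formula entirely: you obtain $\chi(G,\gamma_G)=|\CC(G)|$ by an elementary counting argument (refine a minimal local base to one inside $\CC(G)$; since each $G/M$ is finite, each member of that base lies below only finitely many elements of $\CC(G)$), and you obtain $|\CC(G)|=|t(\Hom(G,\T))|$ from an order-reversing annihilator bijection between $\CC(G)$ and the finite subgroups of $A=t(\Hom(G,\T))$. Your proof thus needs only $w=\chi$ for totally bounded groups \cite{C} and duality for finite abelian groups, so it is more self-contained; the paper's proof, in exchange, establishes the identification $w(G,\gamma_G)=|(G,\gamma_G)^*|$, which explains structurally why the weight is governed by the dual.

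Two spots need tightening, both easily repaired. First, your claim that an infinite $A$ ``has exactly $|A|$ finite subgroups'' is false for general abelian groups ($\Z$ and $\Q$ have exactly one); it holds here because $A$ is \emph{torsion}: every element of $A$ lies in a finite cyclic subgroup and each finite subgroup contributes only finitely many elements, so the finite subgroups of an infinite torsion group number exactly $|A|$. This should be said explicitly. Second, the inference ``$G/G^1$ is infinite, whence $A$ is infinite'' is not valid as written: an infinite abelian group can have trivial torsion dual (again $\Q$, for which $t(\Hom(\Q,\T))=0$), so this step would require invoking residual finiteness of $G/G^1$. In fact you do not need that detour at all: your bijection together with Lemma \ref{almost-divisible} (not almost divisible is equivalent to $\CC(G)$ infinite) immediately forces $A$ to be infinite, since a finite group has only finitely many subgroups.
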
 
\begin{proof}  
(a)  To prove the first two equalities we intend to show the following chain of inclusions 
\begin{equation}\label{chain}
(G,\nu_G)^*\supseteq(G,\gamma_G)^*\supseteq t(\Hom(G,\T))\supseteq (G,\nu_G)^*.
\end{equation}

Since $\gamma_G\leq \nu_G$, the first inclusion $(G,\gamma_G)^* \subseteq (G,\nu_G)^*$ is obvious. 

To prove the inclusion $ t(\Hom(G,\T))  \subseteq (G,\gamma_G)^*$, note that for every $\chi\in t(\Hom(G,\T))$ there exists $m\in\N_+$ such that $m\chi=0$, so  $\chi(mG) = m \chi(G)=0$. In particular, $\chi(G)$ is finite and so $\ker\chi\in\CC(G)$. Thus, $\chi:(G,\gamma_G)\to\T$ is continuous.

To prove the inclusion $(G,\nu_G)^* \subseteq t(\Hom(G,\T))$, fix a neighborhood $U$ of 0 in $\T$ that contains no non-zero subgroups.  
For every continuous character $\chi:(G,\nu_G)\to\T$ there exists $m\in\N_+$ such that $\chi (mG) \subseteq U$. By the choice of $U$ this yields $\chi(mG) = 0$, i.e., $m\chi=0$. Then $\chi\in t(\Hom(G,\T))$.  
 
From the chain of inclusions \eqref{chain}, we obtain that $(G,\nu_G)^*= (G,\gamma_G)^*=t(\Hom(G,\T))$.

\smallskip
To prove the last equality, we identify first $\Hom(G/G^1,\T)$ with a subgroup of $\Hom(G,\T)$ using the canonical homomorphism  $\pi:G\to G/G^1$. In fact, 
the adjoint homomorphism $\pi^*: \Hom(G/G^1,\T)\to \Hom(G,\T)$ is injective, as $\pi$ is surjective. After this identification, we note that 
the inclusion $t(\Hom(G,\T))\supseteq t(\Hom(G/G^1,\T))$  is clear. So let $\chi\in t(\Hom(G,\T))$. Then there exists $m\in\N_+$ such that $m\chi =0$; in particular, $\ker\chi\supseteq G^1$ and so $\chi$ can be factorized as $\chi=\overline\chi\circ\pi$, 
where $\overline\chi:G/G^1\to\T$ is the character induced by $\chi$. Then $\chi$ can be considered as a torsion character of $G/G^1$, and we have proved that $t(\Hom(G,\T))\subseteq\Hom(G/G^1,\T)$. Then $t(\Hom(G,\T))=t(\Hom(G/G^1,\T))$.

\smallskip
(b) As recalled above, $w(L)=\chi(L)$ for any totally bounded abelian group $L$. So $w(G,\gamma_G)=\chi(G,\gamma_G)$ by Fact \ref{profin}(a). As noted in Lemma \ref{res-fin<->G1=0}, $(G/G^1,\gamma_{G/G^1})$ is the Hausdorff reflection of $(G,\gamma_G)$. Therefore, $w(G/G^1,\gamma_{G/G^1})=w(G,\gamma_G)$ and these groups have the same dual group. Then we may assume without loss of generality that $G^1=0$ and so that $(G,\gamma_G)$ is precompact by Fact \ref{profin}(b). Consequently, $w(G,\gamma_G)=|(G,\gamma_G)^*|$, as $w(L)=|L^*|$ for any precompact abelian group $L$ \cite{CR}. To conclude, $(G,\gamma_G)^*=t(\Hom(G,\T))$ by (a).

It remains to prove that $|\CC(G)|=|t(\Hom(G,\T)|$. Let $\CC_c(G)$ be the subfamily of those $N \in \CC(G)$, such that $G/N$ is (finite) cyclic. 
Note that every $N \in \CC(G) $ is a finite intersection of subgroups from $\CC_c(G)$. Moreover, $\CC(G)$ is infinite by the hypothesis that $G$ is not almost divisible, hence $|\CC(G)| = |\CC_c(G)|$.  So it remains to verify that $|\CC_c(G)|=|t(\Hom(G,\T)|$.

Clearly, every $N \in  \CC_c(G)$ gives rise to a character $\chi_N: G \to \T$ by considering
the finite cyclic group $G/N$ as a subgroup of $\T$. This defines an injective map $\CC_c(G)\to t(\Hom(G,\T))$ as the character $\chi_N$ is obviously torsion.   In the opposite direction we just assign to every $\chi \in t(\Hom(G,\T))$ its kernel $N = \ker \chi\in \CC_c(G)$. 
Let us see next that the map $t(\Hom(G,\T)) \to \CC_c(G) $ defined by 
$\chi \mapsto \ker \chi$ is finitely-many-to-one. This will prove that $|\CC_c(G)|= |t(\Hom(G,\T))|$, as $\CC_c(G)$ is infinite.  
Let $\chi, \eta \in t(\Hom(G,\T))$ with $\ker \chi = \ker \eta =N$. Then there exist injective homomorphisms $\chi_1, \eta_1: G/N\to \T$, such that 
$\chi= \chi_1 \circ \pi$ and $\eta= \eta_1 \circ \pi$, where $\pi: G \to G/N$ is the canonical homomorphism. Let $m = |G/N|$, then both $\chi(G)$ and $\eta(G)$ coincide with the unique cyclic subgroup $C$ of $\T$ of order $m$. Hence, there exists an automorphism $\xi$ of $C$, such that $\eta_1 = \xi \circ \chi_1$. Since the automorphism group of $C$ is finite, 
one has only finitely many distinct pairs $\chi,\eta$ with $\ker \chi = \ker \eta =N$. 
\end{proof}

Passing to the general case, the following theorem gives a precise formula for the cardinality and the cofinality  of the family $\CC(G)$ for an infinite abelian group $G$ when $|\CC(G)|\geq\omega$. 

\smallskip
In the sequel, for an infinite cardinal $\kappa$ we let $\log \kappa = \min\{\lambda: 2^\lambda \geq \kappa\}$ the logarithm of $\kappa$. 
 
\begin{theorem}\label{WEIGHT} 
Let $G$ be an abelian group that is not almost divisible. Then 
\begin{equation}\label{chain=}
|\CC(G) |=\chi(G, \gamma_G)= w(G, \gamma_G)=\omega \cdot \sup \{2^{|G/pG|}: p\ \text{prime}\}. 
\end{equation}
\end{theorem}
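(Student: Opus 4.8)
The plan is to reduce everything to the computation of $|t(\Hom(G,\T))|$ and then to evaluate this cardinal one prime at a time. The three equalities $|\CC(G)|=\chi(G,\gamma_G)=w(G,\gamma_G)=|t(\Hom(G,\T))|$ are already supplied by Lemma \ref{profin*}(b), which applies since $G$ is not almost divisible. Hence only the final equality
$$|t(\Hom(G,\T))| = \omega\cdot\sup\{2^{|G/pG|} : p\ \text{prime}\}$$
remains to be proved.

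First I would decompose the torsion group into its $p$-primary components, $t(\Hom(G,\T))=\bigoplus_p t_p$. Since $G$ is not almost divisible, $\CC(G)$ — and therefore $t(\Hom(G,\T))$ — is infinite by Lemma \ref{almost-divisible}, and a direct sum over the countably many primes has cardinality $|\bigoplus_p t_p|=\omega\cdot\sup_p|t_p|$ as soon as the sum is infinite. Thus the theorem reduces to the identity $\omega\cdot\sup_p|t_p|=\omega\cdot\sup_p 2^{|G/pG|}$, and for this it is enough to verify, for each prime $p$, the two inequalities $|t_p|\le\omega\cdot 2^{|G/pG|}$ and $2^{|G/pG|}\le\omega\cdot|t_p|$, since these pass to the suprema (using $\omega\cdot b_p=\max(\omega,b_p)$).

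The heart of the matter is therefore the single-prime estimate. I would identify $t_p$ with the torsion characters whose image lies in the Pr\"ufer subgroup of $\T$, so that $t_p=\bigcup_k\Hom(G,\Z(p^k))$ (a character has $p$-power order exactly when its image lies in some $\T[p^k]=\Z(p^k)$). As every homomorphism into a group of exponent $p$ kills $pG$, one has $\Hom(G,\Z(p))=\Hom(G/pG,\Z(p))=\Hom_{\mathbb F_p}(G/pG,\mathbb F_p)$, the algebraic dual of the $\mathbb F_p$-vector space $G/pG$; its cardinality is $2^{|G/pG|}$ when $G/pG$ is infinite and equals $|G/pG|$ when $G/pG$ is finite. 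To control the higher layers I would use the short exact sequence $0\to p\Z(p^k)\to\Z(p^k)\to\Z(p)\to 0$ with $p\Z(p^k)\cong\Z(p^{k-1})$; applying $\Hom(G,-)$ and left-exactness gives $|\Hom(G,\Z(p^k))|\le|\Hom(G,\Z(p^{k-1}))|\cdot|\Hom(G,\Z(p))|$, hence inductively $|\Hom(G,\Z(p^k))|\le|\Hom(G/pG,\Z(p))|^k$, and taking the union over $k$ yields $|t_p|\le\omega\cdot\sup_k|\Hom(G/pG,\Z(p))|^k$. When $G/pG$ is infinite this supremum is just $2^{|G/pG|}$ (a finite power of $2^{|G/pG|}\ge 2^\omega$), while the inclusion $\Hom(G/pG,\Z(p))\subseteq t_p$ gives the reverse bound, so $|t_p|=2^{|G/pG|}$; when $G/pG$ is finite every layer is finite, so $|t_p|\le\omega$ and $2^{|G/pG|}$ is finite as well. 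In both regimes the two displayed per-prime inequalities hold, which closes the reduction.

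I expect the main obstacle to be the cardinal bookkeeping rather than any deep structural input: one must keep the two genuinely different regimes apart — the \emph{jump} $|\Hom_{\mathbb F_p}(V,\mathbb F_p)|=2^{|V|}$ for an infinite-dimensional $\mathbb F_p$-space $V$ versus the finite case where the dual has the same size as $V$ — and must check that the factor $\omega$ arising both from the union $\bigcup_k$ and from the direct sum over primes is always absorbed. It is absorbed precisely because whenever some $2^{|G/pG|}$ is uncountable it already dominates $\omega$, and when all $G/pG$ are finite the group is narrow (Theorem \ref{narrow:char}) and both sides collapse to $\omega$ by the not-almost-divisible hypothesis. The one point that genuinely requires care is the fact that the dual of an infinite-dimensional $\mathbb F_p$-space $V$ has cardinality $2^{|V|}$, since the naive socle bound $|A|\le 2^{|A[p]|}$ fails for $p$-groups with divisible parts such as $\Z(p^\infty)$; this is exactly why the filtration through the $\Z(p^k)$, and not a one-step socle estimate, is needed.
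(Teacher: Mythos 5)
Your proof is correct, but it takes a genuinely different route from the paper's. The paper splits into the narrow and non-narrow cases and, in the latter, works on the topological side: it first establishes $w(G,\gamma_G)=\sup\{w(G/mG,\gamma_{G/mG}):m\in\N_+\}$ (using that the subgroups $mG$ are cofinal in $\CC(G)$ and that the projections $G\to G/mG$ are $\gamma$-open), then evaluates $w(G/mG,\gamma_{G/mG})=2^{|G/mG|}$ for infinite $G/mG$ via Corollary \ref{gamma-nonmetr} --- i.e., via the coincidence $\gamma=\mathcal P$ on bounded groups (Proposition \ref{gamma=P<->bounded}) and the Kakutani/Comfort--Ross computation of $w(\cdot,\mathcal P)$ in Lemma \ref{PG}(c) --- and finally passes from $m$ to a prime divisor $p$ by the arithmetic observation \eqref{m->p}. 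You instead stay on the algebraic/dual side throughout: after Lemma \ref{profin*}(b) reduces everything to $|t(\Hom(G,\T))|$, you decompose this torsion group into its primary components $t_p$ and compute each $|t_p|$ by hand through the filtration $t_p=\bigcup_k\Hom(G,\Z(p^k))$, left exactness of $\Hom(G,-)$, and $\mathbb F_p$-linear duality for $G/pG$. This is essentially an elementary version of the alternative proof sketched in the paper's Remark following the theorem (where $|t(G^*)|=\sup_p r_p(G^*)=\sup_p 2^{r_p(G/pG)}$ is obtained from Pontryagin duality), with the compact-group structure theory (the identity $|t(K)|=r(t(K))$ for compact $K$, annihilators, duals of $G/pG$) replaced by bare-hands Hom-counting; your filtration argument correctly sidesteps the trap you yourself flag, namely that a one-step socle bound fails for groups like $\Z(p^\infty)$. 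What the paper's main proof buys is economy, since it recycles topological results it develops anyway; what yours buys is that the only nonelementary input is the one already packaged in Lemma \ref{profin*}(b), the case distinction narrow/non-narrow disappears into uniform per-prime estimates, the reduction \eqref{m->p} from $m$ to $p$ becomes unnecessary (the primary decomposition does that job), and you obtain the localized computation $|t_p|=2^{|G/pG|}$ for $G/pG$ infinite --- essentially Remark \ref{p-}(d) --- as a byproduct.
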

\begin{proof}
By Lemma \ref{profin*}(b), $|\CC(G) |= \chi(G, \gamma_G)= w(G, \gamma_G)$,  so it remains to prove only the last equality in \eqref{chain=}.

Assume that $G$ is narrow. According to Theorem \ref{narrow:char} this is equivalent to finiteness of $G/pG$ for every prime $p$. Then $\omega \cdot \sup \{2^{|G/pG|}: p\ \text{prime}\}=\omega$. By Theorem \ref{TOP:narrow} and Lemma \ref{profin*}, $G$ narrow is equivalent to $w(G, \gamma_G)=\omega$ and also to $\chi(G,\gamma_G)=\omega$. This proves the desired equality.

Assume now that $G$ is not narrow, and let $\kappa= w(G, \gamma_G)$. By Theorem \ref{narrow:char}, $\kappa\geq\mathfrak c$. Since every finite-index subgroup of $G$ contains a subgroup of the form $mG$, and since Fact \ref{profin}(c) can be applied for every $m\in\N_+$ for the projection $\pi_m: G\to G/mG$, we deduce that  $\kappa\geq w(G,\gamma_{G/mG})$ for every $m\in\N_+$. Therefore,  $\kappa\geq \sup \{w(G, \gamma_{G/mG}): m\in \N_+\}$. On the other hand, if for each $m\in\N_+$ one fixes a base $\mathcal B_m$ of neighborhoods of $0$ in $(G/mG,\gamma_{G/mG})$ of minimum cardinality, then the family of sets $\bigcup_{m\in\N_+} \{\pi_m^{-1}(B): B\in \mathcal B_m\}$ forms a base of neighborhoods of 0 in $(G, \gamma_G)$ of size $\leq \sup \{w(G/mG,\gamma_{G/mG}): m\in \N_+\}$. This proves that 
\begin{equation}\label{2}
w(G, \gamma_G)=\sup\{w(G/mG,\gamma_{G/mG}): m\in \N_+\}.
\end{equation}
Since $G$ is not narrow, by Theorem \ref{narrow:char} there exists $m\in\N_+$ such that $G/mG$ is infinite. For every $m\in \N_+$ such that $G/mG$ is infinite, $w(G/mG, \gamma_{G/mG})= 2^{|G/mG|}$ by Corollary \ref{gamma-nonmetr} and 
\begin{equation}\label{m->p}
\text{there exists a prime $p$ dividing $m$ such that $|G/mG|=|G/pG|$.}
\end{equation}
Indeed, obviously $|G/mG| \geq |G/pG|$ for every such prime $p$. 
Next we note first that if $G/pG$ is infinite, then $|G/p^nG|=|G/pG|$ for every $n\in \N_+$. Moreover, if $m= p_1^{k_1}\ldots p_s^{k_s}$ with distinct primes $p_i$, then 
$mG = \bigcap _{i=1}^s p_i^{k_i}G$, hence $G/mG \hookrightarrow \bigoplus _{i=1}^sG/ p_i^{k_i}G$. Therefore, $|G/mG| \leq \sup_{i=1}^s |G/p_iG|$. 

Hence, \eqref{2} implies the desired equality. 
\end{proof}

\begin{remark} 
When $G$ is not narrow, there is another proof of this theorem requiring a better knowledge of Pontryagin duality. According to Lemma \ref{profin*}(b), $|\CC(G)| =w(G,\gamma_G)=\chi(G,\gamma_G)=|t(\Hom(G,\T))|$, so it suffices to prove that $|t(\Hom(G,\T))|=\omega \cdot \sup \{2^{|G/pG|}: p\ \text{prime}\}.$ Since $G^* = \Hom(G,\T)$ is compact, 
\begin{equation}\label{**}
|t(G^*)|= r(t(G^*))= \sup \{r_p(G^*): p\ \text{prime}\}=\sup \{2^{r_p(G/pG)}: p\ \text{prime}\}=\sup \{2^{|G/pG|}: p\ \text{prime}\}.
\end{equation}
The next-to-the-last equality uses the fact that 
$r_p(G^*)= 2^{r_p(G/pG)}$, which can be  obtained as follows (see also \cite[Proposition 3.3.15]{DPS}): $r_p(G^*)=r_p(G^*[p])$ and the compact group $G^*[p]$ of exponent $p$
is the annihilator of $pG$, so isomorphic to the dual of the group $G/pG$. Since $G/pG \cong \bigoplus _{r_p(G/pG)}\Z(p)$, we conclude that $G^*[p]\cong \Z(p)^{r_p(G/pG)}$. Hence $r_p(G^*)= 2^{r_p(G/pG)}$
in case these cardinals are infinite, otherwise $r_p(G^*)= {r_p(G/pG)}$. Since at least one of the cardinals $G/pG$ is infinite (by the assumption that  $G$ is narrow
and the fact that if some $G/mG$ is infinite, then also some $G/pG$ is infinite as well, see \eqref{m->p} in the above proof). 
\end{remark}

\begin{corollary}
Let $G$ be a residually finite abelian group. If $G$ is not narrow, then: 
\begin{itemize}
\item[(a)] $w(G, \gamma_G)\geq 2^\mathfrak c$ if and only if there exists a prime $p$ such that $|G/pG|\geq \log 2^\mathfrak c$;
\item[(b)] if $|G/pG|< \log 2^\kappa$ for all primes $p$ and an infinite cardinal $\kappa$, then $w(G, \gamma_G)< 2^\kappa$; 
\item[(c)] $[$under $\mathrm{MA}]$ $w(G, \gamma_G)=\mathfrak c$, if $|G/pG|< \mathfrak c$ for all primes $p$. 
\end{itemize}
\end{corollary}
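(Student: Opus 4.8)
The plan is to derive all three statements from the formula $w(G,\gamma_G)=\omega\cdot\sup\{2^{|G/pG|}:p\text{ prime}\}$ of Theorem \ref{WEIGHT}, together with one elementary fact about cardinal exponentiation. First I would verify that Theorem \ref{WEIGHT} applies: since $G$ is residually finite we have $G^1=0$ by Lemma \ref{res-fin<->G1=0}, and since $G$ is not narrow it is infinite, hence not almost divisible. Because $G$ is not narrow, Theorem \ref{narrow:char} provides a prime $p$ with $G/pG$ infinite, so $\sup\{2^{|G/pG|}:p\text{ prime}\}\geq 2^\omega=\mathfrak c>\omega$; the factor $\omega$ is thus absorbed and $w(G,\gamma_G)=\sup\{2^{|G/pG|}:p\text{ prime}\}=:W$.

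The key auxiliary fact is: for any infinite cardinal $\lambda$, a supremum of countably many cardinals, each strictly smaller than $2^\lambda$, is itself strictly smaller than $2^\lambda$. This holds because K\"onig's theorem gives $\mathrm{cf}(2^\lambda)>\lambda\geq\omega$, so $2^\lambda$ has uncountable cofinality and cannot be the supremum of a countable family of strictly smaller cardinals. As there are only countably many primes, this applies verbatim to $W$. I also record the defining properties of the logarithm: $2^{\log\kappa}\geq\kappa$, and $2^\mu<2^\lambda$ whenever $\mu<\log 2^\lambda$.

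For (a), if some prime $p$ satisfies $|G/pG|\geq\log 2^{\mathfrak c}$, then $W\geq 2^{|G/pG|}\geq 2^{\log 2^{\mathfrak c}}\geq 2^{\mathfrak c}$. Conversely, arguing by contraposition, if $|G/pG|<\log 2^{\mathfrak c}$ for every prime $p$, then $2^{|G/pG|}<2^{\mathfrak c}$ for each $p$ by the logarithm property, and the auxiliary fact with $\lambda=\mathfrak c$ yields $W<2^{\mathfrak c}$. Statement (b) is the same computation with $\mathfrak c$ replaced by $\kappa$: the hypothesis $|G/pG|<\log 2^\kappa$ forces $2^{|G/pG|}<2^\kappa$ for every $p$, and the auxiliary fact with $\lambda=\kappa$ gives $W<2^\kappa$. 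For (c) I would invoke the consequence of Martin's Axiom that $2^\lambda=\mathfrak c$ for every infinite cardinal $\lambda<\mathfrak c$: under the hypothesis $|G/pG|<\mathfrak c$ for all $p$, each infinite quotient contributes the term $2^{|G/pG|}=\mathfrak c$ while the finite quotients contribute finite terms, so $W\leq\mathfrak c$; combined with the lower bound $w(G,\gamma_G)\geq\mathfrak c$ coming from non-narrowness (Theorem \ref{narrow:char}), this gives $w(G,\gamma_G)=\mathfrak c$.

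The only genuinely non-routine point is the auxiliary cardinal-arithmetic fact underlying the converse of (a) and the whole of (b): one must use that $2^\lambda$ has cofinality exceeding $\lambda$, so that a countable supremum of strictly smaller cardinals stays strictly below $2^\lambda$. Everything else is bookkeeping with the definition of $\log$, the absorption of the factor $\omega$, and the standard value $2^\lambda=\mathfrak c$ for $\omega\leq\lambda<\mathfrak c$ under $\mathrm{MA}$.
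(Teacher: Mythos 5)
Your proof is correct and follows essentially the same route as the paper: both rest on the formula of Theorem \ref{WEIGHT}, and your auxiliary fact (a countable supremum of cardinals each below $2^\lambda$ stays below $2^\lambda$, since K\"onig's theorem gives $\mathrm{cf}(2^\lambda)>\lambda\geq\omega$) is exactly the cofinality argument the paper runs, just phrased as a clean contraposition instead of the paper's proof by contradiction. Your treatment of (c) also matches the paper's, which cites Theorem \ref{narrow:char}(b) for the lower bound $w(G,\gamma_G)\geq\mathfrak c$ and the $\mathrm{MA}$ consequence on exponentiation below $\mathfrak c$ for the upper bound.
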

\begin{proof}
(a) If  $|G/pG|\geq \log 2^\mathfrak c$ for some prime $p$, then obviously $2^{|G/pG|}\geq 2^\mathfrak c$, so $w(G, \gamma_G)\geq 2^\mathfrak c$ by Theorem \ref{WEIGHT}. Assume that $w(G, \gamma_G)\geq 2^\mathfrak c$ for all primes $p$. Then obviously $w(G, \gamma_G)=\sup \{2^{|G/pG|}: p\ \text{prime}\}$ by Theorem \ref{WEIGHT}. Assume that all $|G/pG|<  \log 2^\mathfrak c$. Then 
\begin{equation}\label{<}
2^{|G/pG|}< 2^\mathfrak c \mbox{ for all primes}\ p.
\end{equation}
By our assumption, $w(G, \gamma_G)$ cannot coincide with any $2^{|G/pG|}$, hence $\mathrm{cf}(w(G, \gamma_G))=\omega$. Thus the equality $w(G, \gamma_G)=2^\mathfrak c$ is not possible. Since \eqref{<} rules out the strict inequality $w(G, \gamma_G)> 2^\mathfrak c$, we are left with $w(G, \gamma_G)< 2^\mathfrak c$, a contradiction.

\smallskip
(b) As all $|G/pG|<  \log 2^\kappa$, $2^{|G/pG|}< 2^\kappa \text{ for all primes }p$.  Then $w(G, \gamma_G)\leq 2^\kappa \leq \log 2^\kappa$. If $w(G, \gamma_G)=2^\kappa$, then it cannot coincide with any $2^{|G/pG|}$, so $\mathrm{cf}(w(G,\gamma_G))=\omega$, while $\mathrm{cf}(2^\kappa)>\omega$, a contradiction.

\smallskip
(c) Follows from Theorem \ref{narrow:char}(b) and from the fact that $\mathrm{MA}$ yields $ \log 2^\mathfrak c=\mathfrak c$.
\end{proof}

The equality \eqref{**} suggests to consider also the density character $d(G,\gamma_G)$. Let us recall that $d(G,\mathcal P_G)=|G|$ as every subgroup of $G$ is $\mathcal P_G$-closed. In order to compute  $d(G,\gamma_G)$ we observe first that $d(G,\gamma_G)=d(G,\nu_G)$ by Lemma \ref{closed_subgroups}.  

\begin{theorem}\label{density} 
Let $G$ be an infinite residually finite abelian group. Then 
\begin{equation}\label{***}
\log |G| \leq w(G, \nu_G) = d(G,\gamma_G)=d(G,\nu_G)=\omega \cdot \sup \{{|G/pG|}: p\ \text{prime}\}.
\end{equation}
\end{theorem}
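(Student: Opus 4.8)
The plan is to reduce the statement to two essentially independent claims, the computation of $d(G,\nu_G)$ and the cardinal inequality $\log|G|\le w(G,\nu_G)$, while the three equalities linking $w(G,\nu_G)$, $d(G,\gamma_G)$ and $d(G,\nu_G)$ come almost for free. Indeed, $d(G,\gamma_G)=d(G,\nu_G)$ holds by Lemma \ref{closed_subgroups} (a $\gamma_G$-dense subset generates a $\gamma_G$-dense, hence $\nu_G$-dense, subgroup, and conversely). Since $G$ is residually finite, $\nu_G$ is Hausdorff by Lemma \ref{res-fin<->G1=0}, and it is first countable, the chain $\{mG:m\in\N_+\}$ being a countable base of neighborhoods of $0$; by Birkhoff--Kakutani, $\nu_G$ is therefore metrizable. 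For an infinite metrizable space weight and density character coincide (from a dense set $D$ of size $d$, the balls of radius $1/n$ centered at points of $D$ form a base of size $\omega\cdot d=d$), so once we check $d(G,\nu_G)\ge\omega$ we obtain $w(G,\nu_G)=d(G,\nu_G)$. It then remains to prove $d(G,\nu_G)=\omega\cdot\sup\{|G/pG|:p\ \text{prime}\}$ and $\log|G|\le w(G,\nu_G)$.

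For the density character I would work through the discrete quotients. Since $\nu$ is ideal (Example \ref{QZ}(a)) and $G/mG$ is bounded, the quotient topology of $\nu_G$ on $G/mG$ equals $\nu_{G/mG}$, which is discrete by Proposition \ref{gamma=P<->bounded}; thus each projection $\pi_m:(G,\nu_G)\to G/mG$ is a continuous open surjection onto a discrete group. If $D$ is $\nu_G$-dense then $\pi_m(D)$ is dense, hence equal to $G/mG$, giving $|D|\ge|G/mG|$ for every $m$ and so $d(G,\nu_G)\ge\sup_m|G/mG|$; moreover $d(G,\nu_G)\ge\omega$, since $\nu_G$ is either discrete on the infinite set $G$ or a non-discrete Hausdorff topology without isolated points. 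For the reverse inequality, fix the decreasing cofinal chain $m_kG=k!\,G$ and, for each $k$, a transversal $T_k\subseteq G$ of $G/m_kG$ with $|T_k|=|G/m_kG|$; then $D=\bigcup_k T_k$ meets every coset $x+mG$ (choose $k\ge m$, so that $m_kG\subseteq mG$ and $x\in T_k+m_kG$), whence $D$ is dense and $|D|\le\omega\cdot\sup_m|G/mG|$. Finally, $\omega\cdot\sup_m|G/mG|=\omega\cdot\sup_p|G/pG|$ by the analysis already carried out in the proof of Theorem \ref{WEIGHT}: by \eqref{m->p} and the lines following it, every infinite $G/mG$ has cardinality $|G/pG|$ for some prime $p\mid m$, while the finitely-many finite quotients are absorbed by the factor $\omega$.

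For the inequality $\log|G|\le w(G,\nu_G)$ I would use metrizability once more. Writing $\kappa=d(G,\nu_G)=w(G,\nu_G)$ and choosing a dense set $D$ with $|D|=\kappa$, first countability lets one attach to each $x\in G$ a sequence in $D$ converging to $x$; since limits are unique in the Hausdorff space $(G,\nu_G)$, the resulting map $G\to D^\omega$ is injective, so $|G|\le\kappa^\omega\le 2^\kappa$ (using $\kappa\ge\omega$). By the definition of $\log$ this gives $\log|G|\le\kappa=w(G,\nu_G)$, completing the proof.

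The routine parts---metrizability, the two bounds on the density character, and the final cardinal estimate---are all short. The one point genuinely requiring care is the passage $\sup_m|G/mG|=\sup_p|G/pG|$ together with the correct handling of the factor $\omega$ in the narrow case, where each $|G/pG|$ is finite yet the common value must still come out as $\omega$; this is precisely where one leans on the computation already performed for Theorem \ref{WEIGHT}.
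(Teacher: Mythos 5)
Your proposal is correct and follows essentially the same route as the paper: the lower bound via the discrete quotients $G/mG$ (using that $\nu$ is ideal), the upper bound by assembling transversals of the $G/mG$ into a dense set of size $\omega\cdot\sup_m|G/mG|$, the reduction from arbitrary $m$ to primes via \eqref{m->p}, and the final inequality from $|G|\leq 2^{w(G,\nu_G)}$. Your small variations (metrizability in place of $w=d\cdot\chi$, a union of transversals along $k!\,G$ in place of the paper's dense subgroup $H=\sum_m H_m$, and a sequences argument for $|G|\leq 2^\kappa$) are cosmetic rather than a different method.
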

\begin{proof}
The equality $d(G,\gamma_G)=d(G,\nu_G)$ follows from item (b) of Lemma \ref{closed_subgroups}. To prove the last equality in \eqref{***} fix an $m\in \N_+$. Let $D$ be a $\nu_G$-dense subset of $G$ of size $d(G,\nu_G)$. Then its image in $G/mG$ under the canonical homomorphism $\pi: G \to G/mG$ is a dense subset of the discrete group $G/mG$,  so $\pi(D) = G/mG$. Thus $d(G,\nu_G)= |D| \geq |G/mG|$. Since $(G,\nu_G)$ is an infinite Hausdorff group, $d(G,\nu_G) \geq \omega$. This proves the inequality  $d(G,\nu_G)\geq \omega \cdot \sup \{{|G/pG|}: p\ \text{prime}\}. $ 
 
 To prove the opposite inequality, for every $m\in \N_+$ fix a subset $D_m$ of $G$ such that 
\begin{equation}\label{x}
D_m + mG =G\ \  \mbox{ and }\ \  |D_m| = |G/mG|.
\end{equation}
Then the subgroup $H_m$ of $G$ generated by $D_m$ has size $|H_m| =\omega \cdot |G/mG|$. 
So for the subgroup $ H = \sum_{m\in \N_+} H_m$ of $G$ one has $|H| \leq  \omega \cdot \sup \{{|G/mG|}: m\in\N_+\}. $ As noted in \eqref{m->p}, if $G/mG$ is infinite, there exists a prime $p$ dividing $m$ such that $|G/mG|=|G/pG|$. Therefore,
$|H| \leq  \omega \cdot \sup \{{|G/pG|}: p\ \text{prime}\}. $ Since \eqref{x} obviously implies $H + mG = G$ for every $m\in \N_+$, 
$H$ is $\nu_G$-dense in $G$. This proves the last equality in \eqref{***}.

The equality $w(G, \nu_G) = d(G,\nu_G)$ follows from the equalities $w(G,\nu_G) = d(G,\nu_G)\chi(G,\nu_G)$
and $\chi(G,\nu_G)=\omega$. The first inequality in \eqref{***} follows from the inequality $|G|  \leq  2^{w(G,\nu_G)}$.
 \end{proof}

\begin{remark}\label{Rem:w_vs_d} One can relax the condition ``infinite residually finite" to non-almost divisible, as in the previous theorem and lemma. This will be enough to ensure $d(G,\nu_G) \geq \omega$ (then $d(G,\nu_G)$ will have an infinite Hausdorff quotient group, so that $d(G,\nu_G) \geq \omega$ will hold anyway).   
\end{remark}

Theorem \ref{density} gives a precise value of the density character $d(G,\gamma_G)=d(G,\nu_G)$, but in many cases the cardinal
given in \eqref{***} coincides with $|G|$ (as in the  case of the Bohr topology). In the next example we show that these cardinals need not coincide
in general and  the gap may be  as big as possible (i.e.,  $|G|=  2^{d(G,\gamma_G)}$).

\begin{example} Let $p$ be a prime. Let  $B = \bigoplus_{n\in\N_+} \Z(p^n)$,  $\overline{B} = \prod_{n\in\N_+} \Z(p^n)$   and   $ G = t(\overline{B} )$. Then  one can easily see that  
$p^nG + B = G$. This means that  $B$  is $\nu_G$-dense in $G$ (as the natural topology of $G$ coincides with its $p$-adic one). Since $B$ is countable, this proves that $d(G,\nu_G) < |G|$ (as $G$ has size $\cont$).
\end{example}


\subsection{When $\mathcal C(G)$ is cofinal in the poset $\mathcal S(G)$}

Let us recall the next notion following \cite{DP}:

\begin{deff}
 An abelian group $G$ is {\em strongly non-divisible} if no proper quotient of $G$ is divisible.  \end{deff}

Clearly, an abelian group is strongly non-divisible if and only if each of its quotient is reduced. 

\smallskip
In the next theorem we describe a subclass $\mathfrak S$ of the class of all residually finite abelian groups defined by the property that $\mathcal C(G)$ is cofinal in the poset $\mathcal S(G)$ of all subgroups of $G$ ordered by inclusion. This remarkable class can be described also by many other equivalent properties: 

\begin{theorem}\label{str_div} 
Let $G$ be an abelian group. Then the following conditions are equivalent: 
\begin{itemize}
  \item[(a)] every quotient of $G$ is residually finite;
  \item[(b)]  every subgroup of $G$ is $\gamma_G$-closed; 
  \item[(c)]  every subgroup of $G$ is $\nu_G$-closed; 
  \item[(d)]  $G$ is strongly non-divisible;
  \item[(e)] $n=r_0(G)$ is finite and for every subgroup $F\cong \Z^n$ of $G$ one has $G/F \cong \bigoplus _p B_p$, where each $B_p$ is a bounded abelian $p$-group;
  \item[(f)] $n=r_0(G)$ is finite and for every subgroup $F$ of $G$ of rank $n$ one has $G/F \cong \bigoplus _p B_p$, where each $B_p$ is a bounded abelian $p$-group;
  \item[(g)]  $\mathcal C(G)\setminus \{G\}$ is cofinal in the poset $\mathcal S(G)$ of all subgroups of $G$. 
\end{itemize}
\end{theorem}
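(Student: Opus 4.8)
The plan is to prove the seven conditions equivalent in three loosely independent blocks — a soft topological block for (a)--(c), a density/divisibility reformulation for (g), and a structural block for (d)--(f) — and then to weld the structural block to the topological one.

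\textbf{Topological block.} First I would dispose of (b)$\Leftrightarrow$(c), which is immediate from Lemma \ref{closed_subgroups}(a). For (a)$\Leftrightarrow$(b), recall that $\gamma$ is ideal (Fact \ref{profin}(c)), so for a subgroup $H$ the quotient topology induced by $\gamma_G$ on $G/H$ is exactly $\gamma_{G/H}$. Hence $H$ is $\gamma_G$-closed iff $\gamma_{G/H}$ is Hausdorff, iff $G/H$ is residually finite by Lemma \ref{res-fin<->G1=0}. Quantifying over all subgroups $H$ turns ``every subgroup closed'' into ``every quotient residually finite'', giving (a)$\Leftrightarrow$(b).

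\textbf{Cofinality block.} Since $\gamma_G$ is a linear topology with the finite-index subgroups as a base, the $\gamma_G$-closure of a subgroup $H$ is $\bigcap\{N\in\CC(G):N\supseteq H\}$. Thus a proper subgroup $H$ lies inside some member of $\CC(G)\setminus\{G\}$ exactly when $\overline H^{\gamma_G}\neq G$, i.e. when $H$ is not $\gamma_G$-dense, so (g) is equivalent to the absence of proper $\gamma_G$-dense subgroups. By Lemma \ref{closed_subgroups}(b) and Lemma \ref{res-fin<->G1=0}, a proper subgroup $H$ is $\gamma_G$-dense iff $G/H$ is nonzero and divisible. Therefore (g) holds iff $G$ has no nonzero divisible quotient. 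This is exactly (d): a non-reduced quotient $Q$ has $D(Q)\neq 0$ divisible, and since divisible groups are injective, $D(Q)$ is a direct summand of $Q$, hence itself a nonzero divisible quotient of $G$; conversely a nonzero divisible quotient is non-reduced. So (d)$\Leftrightarrow$(g).

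\textbf{Structure block.} The implication (a)$\Rightarrow$(d) is free, as residually finite groups are reduced and (a) passes to all quotients. For (d)$\Rightarrow$(f) I first show $n:=r_0(G)$ is finite: otherwise a maximal independent set gives a free subgroup $F$ of infinite rank with $G/F$ torsion, and the evident surjection $F\to\Q$ extends (by injectivity of $\Q$) to a surjection $G\to\Q$, a nonzero divisible quotient contradicting (d). With $n$ finite, any rank-$n$ subgroup $F$ makes $G/F$ torsion by additivity of torsion-free rank, and $G/F$, being a quotient of $G$, inherits (d) and is reduced; writing $G/F=\bigoplus_p T_p$, each reduced $p$-group $T_p$ has no $\Z(p^\infty)$-quotient (such a quotient would be a nonzero divisible quotient of $G$), so $T_p$ is bounded. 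This forces $G/F\cong\bigoplus_p B_p$ with $B_p$ bounded, which is (f); and (f)$\Rightarrow$(e) is trivial since free rank-$n$ subgroups are rank-$n$ subgroups.

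\textbf{Closing the loop.} For (e)$\Rightarrow$(d): a nonzero divisible quotient $D$ of $G$ has $r_0(D)\le n$, so $G\twoheadrightarrow\Q$ or $G\twoheadrightarrow\Z(p^\infty)$; restricting such a map to a free $F\cong\Z^n$ gives cyclic (respectively finite) image, whence $G/F$ acquires $\Q/\Z$ or $\Z(p^\infty)$ as a quotient, a nonzero divisible quotient, which is impossible as $\bigoplus_p B_p$ with $B_p$ bounded has none. For (d)$\Rightarrow$(a) it suffices, by inheritance of (d) to quotients, to show a strongly non-divisible $G$ satisfies $G^1=0$. Using (e), the projection $\pi\colon G\to G/F\cong\bigoplus_p B_p$ (which is residually finite) gives $\pi(G^1)\subseteq (G/F)^1=0$, so $G^1\subseteq F\cong\Z^n$. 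Fixing a prime $p$ with $p^{e_p}B_p=0$ and setting $P=\pi^{-1}(B_p)$, one checks $p^kG\cap F=p^kP$ for $k\ge e_p$ and $p^{e_p}P\subseteq F$, so $G^1\subseteq\bigcap_{k\ge e_p}p^kP\subseteq\bigcap_k p^{k-e_p}F=0$; hence $G$ is residually finite and (a) follows. I expect the genuine obstacle to be the $p$-group step inside (d)$\Rightarrow$(f), where passing from ``every quotient reduced'' to ``every primary component bounded'' requires the structure theory of $p$-groups (an unbounded reduced $p$-group has an unbounded basic subgroup $\bigoplus_i\Z(p^{m_i})$, which surjects onto $\Z(p^\infty)$, and this extends to the whole group by injectivity of $\Z(p^\infty)$); the other steps are either soft topology or the self-contained Ulm-subgroup computation above.
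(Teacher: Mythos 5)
Your proof is correct, and although its structural core coincides with the paper's (finiteness of $r_0(G)$ via a surjection onto $\Q$, boundedness of the primary components via basic subgroups and surjections onto $\Z(p^\infty)$, both extended by injectivity of divisible groups), the way you assemble the equivalences is genuinely different. The paper proves the cycle (c)$\Rightarrow$(d)$\Rightarrow$(a)$\Rightarrow$(b)$\Leftrightarrow$(c) together with (d)$\Rightarrow$(e)$\Rightarrow$(f)$\Rightarrow$(g)$\Rightarrow$(d), where (d)$\Rightarrow$(a) is a self-contained Zorn's-lemma argument (for $0\neq x$ choose $H$ maximal with $x\notin H$; then $G/H$ is cocyclic, hence finite by (d), so $G^1=0$), and (f)$\Rightarrow$(g) requires showing that (f) is inherited by quotients plus a bijection between the proper finite-index subgroups above $H$ and $\CC(G/H)\setminus\{G/H\}$. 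You replace both of these with softer arguments: (d)$\Leftrightarrow$(g) follows directly from the closure formula $\overline{H}^{\gamma_G}=\bigcap\{N\in\CC(G):N\supseteq H\}$ for the linear topology $\gamma_G$, so that (g) says precisely that no proper subgroup is $\gamma_G$-dense, i.e., that no quotient of $G$ is nonzero and divisible; and (d)$\Rightarrow$(a) is obtained from the already-established (e) by the explicit computation $G^1\subseteq F$ and $G^1\subseteq\bigcap_{k} p^kP=0$, avoiding Zorn's lemma at the price of making this implication depend on the structure block (no circularity arises, since you prove (d)$\Rightarrow$(f)$\Rightarrow$(e) beforehand). You also reverse the (e)/(f) link: you prove (d)$\Rightarrow$(f) directly --- legitimate, since the argument uses only $r_0(F)=n$ and never the freeness of $F$ --- so that (f)$\Rightarrow$(e) is trivial, whereas the paper proves (d)$\Rightarrow$(e) and then needs a small argument for (e)$\Rightarrow$(f). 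Your gains are brevity and a transparent topological meaning for the cofinality condition (g); the paper's route keeps (d)$\Rightarrow$(a) independent of the structure theory and yields the quotient-stability of (f) as a byproduct. The two facts you assert without detailed proof --- that $\bigoplus_p B_p$ with each $B_p$ bounded has no nonzero divisible quotient, and that such a group is residually finite --- are routine primary-component checks, at the same level of detail the paper allows itself.
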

\begin{proof} 
We prove first the equivalence of (a), (b), (c) and (d). 

\smallskip
(a)$\Rightarrow$(b) Let $H$ be a subgroup of $G$. Then $G/H$ is residually finite by hypothesis. By Lemma \ref{Haus},  $(G/H,\gamma_{G/H})$ is Hausdorff, so $\{0_{G/H}\}$ is $\gamma_{G/H}$-closed in $G/H$. Since $H$ is an inverse image of $\{0_{G/H}\}$ under the continuous canonical homomorphism $G \to G/H$, we conclude that $H$ is $\gamma_{G}$-closed. 

\smallskip
(b)$\Leftrightarrow$(c) Follows from Lemma \ref{closed_subgroups}(a).

\smallskip
(c)$\Rightarrow$(d) Assume that $H$ is a proper subgroup of $G$. To prove that $G/H$ is not divisible, one deduces from (c) that $H$ is not $\nu_G$-dense. So there exists $m \in \N_+$ such that $H + mG \ne G$. Hence $m(G/H) \ne G/H$. 

\smallskip
(d)$\Rightarrow$(a) We prove first that (d) implies $G^1 = 0$.  Let $0 \ne x \in G$. Using Zorn's Lemma one can find a subgroup $H$ of $G$ with $x\not \in H$ and maximal with respect to this property. Then the non-zero element $\bar x  = x + H$  of the quotient $G/H$ is contained in every non-trivial subgroup of $G/H$. Hence  $H$ is quasicyclic,  i.e., isomorphic to a subgroup of Pr\" ufer's group $\Z(p^\infty)$ for some prime $p$. Since $G$  is strongly non-divisible, $G/H$ cannot be divisible, so it must be finite. Hence $H \in \mathcal C(G)$. This proves that $x\not \in \bigcap_{H \in \mathcal C(G)}H={G^1}$ by \eqref{G^1}. Therefore, $G^1 = 0$. 

Now suppose that $H$ is a subgroup of $G$. Since the property (d) is obviously preserved by  quotients, we conclude that $G/H$ is strongly non-divisible as well, so we can apply the first part of this argument to conclude that $G/H$ is residually finite. 

\smallskip
Our next aim is to prove the chain of implications (d)$\Rightarrow$(e)$\Rightarrow$(f)$\Rightarrow$(g)$\Rightarrow$(d) that will establish the equivalence of (d), (e), (f) and (g) and will conclude the proof of the theorem. 

\smallskip
(d)$\Leftrightarrow$(e) Was proved in  \cite{DP}. Here we need only the implication (d)$\Rightarrow$(e). Since the reader may have no easy access to \cite{DP}, we provide a complete proof of this implication.  Assume for a contradiction that $r_0(G)$ is infinite and fix a free subgroup $L$ of $G$ of infinite rank. Then there exists a surjective homomorphism $f:L \to \Q$. Since $\Q$ is divisible, one can extend $f$ to a surjective homomorphism $G\to \Q$, a contradiction since $G$ is strongly non-divisible. 
Fix any  subgroup $F\cong \Z^n$ of $G$. Since $r_0(G) = r_0(F)$, the quotient $G/F$ is torsion. Let  $G/F \cong \bigoplus _p B_p$, where each $F_p$ is an abelian $p$-group.
Assume for a contradiction that $B_p$ is not bounded for some prime $p$. Since $G$ is strongly non-divisible, $B_p$ is reduced. Fix a basic subgroup $B$ of $B_p$. 
Since $B_p$ is unbounded and reduced, this implies that the subgroup $B$ is unbounded too. Since $B$ is a direct sum of cyclic subgroups, we deduce that 
$B$ contains a subgroup $H\cong \bigoplus _n \Z(p^n)$. Fix a surjective homomorphism $h: H \to \Z(p^\infty)$.    Since $ \Z(p^\infty)$ is divisible, one can extend $h$ to a surjective homomorphism $G\to  \Z(p^\infty)$, a contradiction since $G$ is strongly non-divisible. 

\smallskip 
(e)$\Rightarrow$(f) It suffices to note that every subgroup $F$ of $G$ of rank $n$ contains a subgroup isomorphic to $\Z^n$
and that a quotient of a group of the form $\bigoplus _p B_p$ (with each $B_p$ a bounded abelian $p$-group) has the same form. 

\smallskip
(f)$\Rightarrow$(g) We shall check below that the property (f) is inherited by all quotients of $G$. Moreover, there is an (order preserving) bijection between 
the proper finite-index subgroups of $G$ containing a given proper subgroup $H$ of $G$ and the poset $\mathcal C(G/H)\setminus \{G/H\}$. 
Therefore, to prove (f)$\Rightarrow$(g) it suffices to show that (f) implies $\mathcal C(G)\ne \{G\}$, i.e., $G$ has a proper finite-index subgroup. 
Fix any  subgroup $F\cong \Z^n$ of $G$ such that the quotient $G/F$ has the form $G/F= \bigoplus _p B_p$, where each $B_p$ is a bounded abelian $p$-group.
If $G/F\ne 0$, then at least one $B_p \ne 0$, so $B_p$ has a proper finite-index subgroup $N_p$. Then the inverse image of the subgroup
$N_p\oplus \bigoplus_{q\ne p}B_q$ of $G/F$ under the canonical projection $G \to G/F$ is a proper finite-index subgroup of $G$. 
Now assume that $G/F=0$. But this means that $G = F \cong \Z^n$, so $G$ has plenty of proper finite-index subgroups. 

Let us check now that  the property (f) is inherited by all quotients of $G$. Let $G/H$ be a quotient of $G$ with $r_0(G/H)= m \leq n = r_0(G)$. 
Fix any  subgroup $F_1\cong \Z^m$ of $G/H$. We have to see that the quotient $(G/H)/F_1$ has the form $\bigoplus _p B^*_p$, where each $B^*_p$ is a bounded abelian $p$-group. Let $a_1, \ldots, a_m$ be a set of free generators of $F_1$. Pick $x_1, \ldots, x_m\in G$ such that $x_i + H = a_i$  in $G/H$ for $i= 1, \ldots, m$. Then 
$x_1, \ldots, x_m$ form an independent subset of $G$ of size $m$. Since $r_0(G) = n \geq m$, we can complete this independent set to a maximal 
independent subset $X = \{x_1, \ldots, x_n\}$ of $G$ and let $F = \langle x_1, \ldots, x_n\rangle$. Then $F \cong \Z^n$ and  the canonical homomorphism $G \to G/H$ takes $F$ onto $F_1$. Therefore $(G/H)/F_1$ is isomorphic to a quotient of $G/F$. By (f),  $G/F= \bigoplus _p B_p$, where each $B_p$ is a bounded abelian $p$-group. Hence, its quotients have the same form. 

\smallskip
(g)$\Rightarrow$(d) Let $H$ be a proper subgroup of $G$. Then there exists $N\in\CC(G)$ such that $H\subseteq N$. Consequently, $N/H\in\CC(G/H)$ and so $G/H$ is not divisible.
\end{proof}


\section{Functorial topologies vs subcategories of $\Ab$}\label{constr-sec}

We enrich here our supply of examples of functorial topologies. 

\begin{example}\label{examples}
Let $G$ be an abelian group.
\begin{itemize}
 \item[(a)] The \emph{$p$-adic topology} $\nu^p_G$ of $G$ has the family of subgroups $\{p^nG:n\in \N_+\}$ as a base of the neighborhoods of $0$.
 \item[(b)] The \emph{pro-$p$-topology} $\gamma_{G}^{p}$ has the family of all subgroups $H$ of $G$ with $G/H$ a finite abelian $p$-group as a base of the neighborhoods of $0$. Obviously, $\gamma_G =\sup_p \gamma_{G}^{p}$. 
  \item[(c)] The \emph{$p$-Bohr topology} $\mathcal P^p_G$ is the initial topology of all homomorphisms $G \to \Z(p^\infty)$, where $\Z(p^\infty)$ is equipped with the topology inherited from $\T$.
 \item[(d)] A topological group $G$ is called {\em $\aleph_0$-bounded}, if for every non-empty open subset $U$ of $G$ there exists a countable subset $A$ of $G$ with $G = U +A$. The $\aleph_0$-bounded groups were introduced and studied by Guran \cite{Gu}. 

The class $\mathcal G$ of all $\aleph_0$-bounded groups contains the class of totally bounded groups and $\mathcal G$ is stable under taking products, subgroups and quotients. In particular, every abelian group $G$ admits a maximal $\aleph_0$-bounded topology that we shall denote by $\Gs_G$ (for more properties of these topologies see \cite{LT} or \cite{D_SUN}). 
 \item[(e)] The \emph{pro-countable topology} $\varrho_G$ has all subgroups of countable index  as a base of the neighborhoods of $0$. Obviously, $\varrho \leq \mathfrak G$.
\end{itemize}
\end{example}

In the large lattice $\mathfrak F t$ of functorial topologies one has the following diagram.

\begin{equation*}
\xymatrix{
\Gs\ar@{-}[dd] \ar@{-}[dr] & &\\
& \varrho \ar@{-}[dd] & \\
\mathcal P \ar@{-}[dr] \ar@{-}[dd]& & \nu \ar@{-}[dd] \ar@{-}[dl]\\
& \gamma \ar@{-}[dd] & \\
\mathcal P^p \ar@{-}[dr]& & \nu^p \ar@{-}[dl] \\
& \gamma^p & \\
}
\end{equation*}

For every prime $p$ and an abelian group $G$ consider the subset 
$$
\mathcal C_p (G) = \{N\in  \mathcal C (G): |G/N| \mbox{ is a power of }p\}
$$
of $\CC(G)$. In analogy to narrow abelian groups one can introduce the following notion.

\begin{deff}
For a prime $p$, an abelian group $G$ is said to be \emph{$p$-narrow} if $\CC_p(G)$ is countable.
\end{deff}

Clearly, the subsets $\mathcal C_p (G)$ generate $\mathcal C(G)$ in the sense that every $N \in \mathcal C (G)$ is a finite intersection of subgroups $N_p \in \mathcal C_p (G)$.

In analogy to the first Ulm subgroup, we introduce
$$
G^1_p=\bigcap_{n\in\N_+}p^n G.
$$

\begin{deff}
Let $p$ be a prime. An abelian group $G$ is \emph{residually $p$-finite} if $G$ is isomorphic to a subgroup of a direct product of finite abelian $p$-groups.
\end{deff}

It is easy to see that $G^1_p=\bigcap_{N\in\CC_p(G)}N$, so $G$ is residually $p$-finite if and only if $G^1_p=0$.

\begin{remark}\label{p-}
The results of the previous sections can be ``localized at $p$'', i.e., for any abelian group $G$,
\begin{itemize}
\item[(a)] $\gamma^p_G=\mathcal P^p_G$ if and only if $G$ is a bounded abelian $p$-group, if and only if $\nu^p_G=\delta_G$;
\item[(b)] $\gamma^p=\inf\{\nu^p,\mathcal P^p\}=\inf\{\nu^p, \gamma\}=\inf\{\mathcal P^p, \gamma\}$;
\item[(c)] $G$ is $p$-narrow if and only if $G/pG$ is finite, if and only if $\gamma^p_G=\nu^p_G$;
\item[(d)] if $\CC_p(G)$ is infinite, then $|\CC_p(G)|=\chi(G,\gamma^p_G)=w(G,\gamma^p_G)=2^{|G/pG|}$;
\item[(e)] if $G$ is infinite and residually $p$-finite, then $\log|G|\leq w(G,\nu^p_G)=d(G,\gamma^p_G)=d(G,\nu^p_G)=|G/pG|$.
\end{itemize}
\end{remark} 


\subsection{The equalizer of two functorial topologies}

Inspired by the results of the previous sections, we consider now functorial topologies in general. In particular, we construct classes of abelian groups starting from functorial topologies. The starting example is that of narrow abelian groups, described by Theorem \ref{narrow:char} as those abelian groups for which the profinite topology coincides with the natural topology (see Example \ref{narrow-eq} below for more details).

\smallskip
Let $\mathcal T$ and $\mathcal S$ be functorial topologies. Define the \emph{equalizer} of $\mathcal T$ and $\mathcal S$ as 
$$\mathcal E(\mathcal T,\mathcal S)=\{G\in\Ab:\mathcal T_G=\mathcal S_G\}.$$
Obviously, $0\in\mathcal E(\mathcal T,\mathcal S)$.

The following are the basic properties of the equalizer. 

\begin{lemma}\label{equalizer}
Let $\mathcal T$ and $\mathcal S$ be functorial topologies. Then:
\begin{itemize}
\item[(a)] $\mathcal E(\mathcal T,\mathcal S)$ is stable under taking finite products; 
\item[(b)] if $\mathcal T$ and $\mathcal S$ are hereditary, then $\mathcal E(\mathcal T,\mathcal S)$ is stable under taking subgroups;
\item[(c)] if $\mathcal T$ and $\mathcal S$ are ideal, then $\mathcal E(\mathcal T,\mathcal S)$ is stable under taking quotients.
\end{itemize}
\end{lemma}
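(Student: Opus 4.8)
The plan is to prove the three stability properties of the equalizer $\mathcal E(\mathcal T,\mathcal S)$ directly from the definitions, using the corresponding compatibility properties of functorial topologies recorded in Lemma \ref{functorial-basic}. The key point in each case is that the hypothesis (on products, subgroups, or quotients) forces the two topologies $\mathcal T$ and $\mathcal S$ to be computed by the \emph{same} categorical construction, so that their equality on the input groups propagates to the output group.

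For part (a), I would take $G_1,G_2\in\mathcal E(\mathcal T,\mathcal S)$, so that $\mathcal T_{G_1}=\mathcal S_{G_1}$ and $\mathcal T_{G_2}=\mathcal S_{G_2}$. By Lemma \ref{functorial-basic}(a) one has $\mathcal T_{G_1\times G_2}=\mathcal T_{G_1}\times\mathcal T_{G_2}$ and likewise $\mathcal S_{G_1\times G_2}=\mathcal S_{G_1}\times\mathcal S_{G_2}$. The two product topologies therefore coincide, giving $G_1\times G_2\in\mathcal E(\mathcal T,\mathcal S)$. For part (b), let $G\in\mathcal E(\mathcal T,\mathcal S)$ and $H$ a subgroup of $G$. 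Since $\mathcal T$ and $\mathcal S$ are hereditary, Lemma \ref{functorial-basic}(c) (the ``equality'' clause) yields $\mathcal T_H=\mathcal T_G\restriction_H$ and $\mathcal S_H=\mathcal S_G\restriction_H$; as $\mathcal T_G=\mathcal S_G$, restricting to $H$ gives $\mathcal T_H=\mathcal S_H$. Part (c) is entirely parallel: for a subgroup $H$ of $G\in\mathcal E(\mathcal T,\mathcal S)$, idealness together with Lemma \ref{functorial-basic}(d) gives $\mathcal T_{G/H}=(\mathcal T_G)_q$ and $\mathcal S_{G/H}=(\mathcal S_G)_q$, and since $\mathcal T_G=\mathcal S_G$ their quotient topologies agree, so $\mathcal T_{G/H}=\mathcal S_{G/H}$.

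I do not expect any genuine obstacle here: the lemma is essentially a formal consequence of Lemma \ref{functorial-basic}, and each item reduces to observing that equal inputs produce equal outputs under a fixed functorial construction (product, restriction, or quotient). The only mild subtlety is to invoke the correct direction of Lemma \ref{functorial-basic}(c) and (d): the hereditary (respectively ideal) hypothesis is exactly what upgrades the generic inequalities $\mathcal T_H\geq\mathcal T_G\restriction_H$ and $(\mathcal T_G)_q\geq\mathcal T_{G/H}$ to equalities, and without it the argument breaks. Everything else is routine, and no hypothesis beyond those stated is needed, so the proof will be short.
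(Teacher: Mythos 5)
Your proof is correct and follows exactly the paper's own route: the paper proves (a), (b), (c) by citing Lemma \ref{functorial-basic}(a), (c), (d) respectively, which is precisely the argument you spell out, including the key observation that the hereditary (resp.\ ideal) hypothesis is what upgrades the generic inequalities in Lemma \ref{functorial-basic}(c) and (d) to the equalities needed. Nothing is missing; your write-up is just a more explicit version of the paper's one-line proof.
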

\begin{proof}
(a), (b) and (c) follow directly respectively from Lemma \ref{functorial-basic}(a), (c) and (d).
\end{proof}

So the equalizer of two functorial topologies is always stable under taking finite products, but it may fail to be stable under taking arbitrary products, as item (d) of Example  \ref{narrow-eq} shows.

\medskip
Note that the class $\mathcal E(\mathcal T,\delta)$ is precisely the class $\mathcal C ( \mathcal T)$ introduced and studied in \cite{BM}. Following the terminology used in \cite{BM}, we say that a class of abelian groups is a \emph{discrete class} if it is stable under isomorphic groups, finite products and subgroups; moreover, it is an \emph{ideal discrete class} if it is stable also under quotients.

So Lemma \ref{equalizer} says that $\mathcal E(\mathcal T,\mathcal S)$ is a discrete class if $\mathcal T$ and $\mathcal S$ are hereditary, and $\mathcal E(\mathcal T,\mathcal S)$ is an ideal discrete class if $\mathcal T$ and $	\mathcal S$ are hereditary and ideal.

\medskip
In the sequel we see various examples of classes of abelian groups described as equalizers of pairs of functorial topologies. To this end we need some more notation. Let $\mathcal N$ be the class of all narrow abelian groups and for every prime $p$ let $\mathcal N_p$ be the class of all $p$-narrow abelian groups. Since every $N \in \mathcal C (G)$
is a finite intersection of subgroups $N_p \in \mathcal C_p (G)$, we have that 
$$
\mathcal N = \bigcap _p \mathcal N _p.
$$

\begin{example}\label{narrow-eq}
\begin{itemize}
\item[(a)] By Theorems \ref{narrow:char} and \ref{inf=gamma}, 
$$
\mathcal N =\mathcal E(\gamma,\nu)=\mathcal E(\nu,\mathcal P).
$$ 
Hence, the class $\mathcal N$ is closed under taking quotients; indeed, both the profinite and the natural topology are ideal, and so Lemma \ref{equalizer}(c) applies.
\item[(b)] The class $\mathcal N$ contains all divisible abelian groups $D$, since both $\gamma_D$ and $\nu_D$ coincide with the indiscrete topology of $D$ as noted in Example \ref{QZ}(b).
\item[(c)] In view of item (b), $\mathcal N$ is not closed under taking subgroups, as every abelian group is subgroup of a divisible abelian group. So it suffices to consider $G=\Z(p)^{(\N)}$, where $\gamma_G<\nu_G=\delta_G$, and the divisible hull of $G$. 
\item[(d)] Moreover, $\mathcal N$ is not stable under taking infinite products, as $\Z\in\mathcal N$, while $\Z^\N\not\in\mathcal N$.
\item[(e)] According to Proposition \ref{gamma=P<->bounded}, $\mathcal E(\gamma,\mathcal P)=\{\text{bounded abelian groups}\}$.
\item[(f)]  By Remark \ref{p-}, $\mathcal N_p =\mathcal E(\gamma^p,\nu^p)=\mathcal E(\nu^p,\mathcal P^p),$ and
\item[(g)]  $\mathcal E(\gamma^p,\mathcal P)=\{\text{bounded abelian $p$-groups}\}$. 
\end{itemize}
\end{example}


Item (a) of the next corollary is precisely \cite[Theorem 2.3]{BM}.

\begin{corollary}\label{NewCoro} 
Let $\mathcal T$ be a functorial topology. Then the classes $\mathcal E(\mathcal T,\delta)$ and $\mathcal E(\mathcal T,\iota)$ are stable under isomorphisms and finite products. Moreover: 
\begin{itemize}
\item[(a)] $\mathcal E(\mathcal T,\delta)$ is stable under taking subgroups (i.e., it is a discrete class); if $\mathcal T$ is ideal, then $\mathcal E(\mathcal T,\delta)$ is stable also under taking quotients (i.e., it is an ideal discrete class);
\item[(b)] $\mathcal E(\mathcal T,\iota)$ is stable under taking quotients; if $\mathcal T$ is hereditary, then $\mathcal E(\mathcal T,\iota)$ is stable also under taking subgroups (i.e., it is an ideal discrete class).
\end{itemize}
\end{corollary}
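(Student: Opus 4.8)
The plan is to deduce everything from Lemma \ref{equalizer} together with the properties established earlier for the discrete topology $\delta$ and the indiscrete topology $\iota$. The key observation, recorded in the introduction, is that $\delta$ and $\iota$ are \emph{both hereditary and ideal}. This is what makes the two equalizers $\mathcal E(\mathcal T,\delta)$ and $\mathcal E(\mathcal T,\iota)$ behave so symmetrically, and it is the engine of the whole argument. Since every functorial topology is stable under isomorphisms and, by Lemma \ref{equalizer}(a), every equalizer is stable under finite products, the two opening claims are immediate and require no hypothesis on $\mathcal T$.

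For part (a), first I would note that $\mathcal E(\mathcal T,\delta)=\mathcal C(\mathcal T)$ as remarked just before the statement, but this identification is not even needed: since $\delta$ is hereditary, Lemma \ref{equalizer}(b) applies as soon as $\mathcal T$ is hereditary. Here, however, one wants stability under subgroups \emph{unconditionally}. The point is that membership $G\in\mathcal E(\mathcal T,\delta)$ means $\mathcal T_G=\delta_G$, i.e.\ $\mathcal T_G$ is discrete; and if $H\leq G$ then by Lemma \ref{functorial-basic}(c) we have $\mathcal T_H\geq\mathcal T_G\restriction_H=\delta_G\restriction_H=\delta_H$, forcing $\mathcal T_H=\delta_H$. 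So the inequality $\mathcal T_H\geq\mathcal T_G\restriction_H$ of Lemma \ref{functorial-basic}(c) already does the job because the discrete topology is maximal. For the quotient statement under the additional hypothesis that $\mathcal T$ is ideal, I would invoke Lemma \ref{equalizer}(c) directly, since $\delta$ is ideal and $\mathcal T$ is assumed ideal.

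Part (b) is the mirror image, exploiting that $\iota$ is minimal rather than maximal. Membership $G\in\mathcal E(\mathcal T,\iota)$ means $\mathcal T_G=\iota_G$ is indiscrete. For stability under quotients, if $H\leq G$ then by Lemma \ref{functorial-basic}(d) the quotient topology satisfies $(\mathcal T_G)_q\geq\mathcal T_{G/H}$; but $(\mathcal T_G)_q=(\iota_G)_q=\iota_{G/H}$, and since $\iota_{G/H}$ is the \emph{smallest} group topology, the inequality forces $\mathcal T_{G/H}=\iota_{G/H}$. Thus no hypothesis on $\mathcal T$ is needed for quotient-stability. For the subgroup statement under the hypothesis that $\mathcal T$ is hereditary, I would apply Lemma \ref{equalizer}(b), as $\iota$ is hereditary and $\mathcal T$ is assumed so.

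The main thing to watch is not any deep obstacle but the asymmetry in which direction the functoriality inequalities of Lemma \ref{functorial-basic} point: for $\delta$ one uses the subgroup inequality (c) with the maximal topology to get free subgroup-stability, while for $\iota$ one uses the quotient inequality (d) with the minimal topology to get free quotient-stability. The conditional halves (needing $\mathcal T$ ideal for $\mathcal E(\mathcal T,\delta)$-quotients, and $\mathcal T$ hereditary for $\mathcal E(\mathcal T,\iota)$-subgroups) then follow mechanically from the corresponding items of Lemma \ref{equalizer}. The parenthetical remarks identifying these as discrete and ideal discrete classes are just a matter of matching the definitions already given before the statement.
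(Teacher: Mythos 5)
Your proof is correct and follows essentially the same route as the paper: the unconditional halves are obtained exactly as in the paper's proof, by combining the inequalities of Lemma \ref{functorial-basic}(c) and (d) with the maximality of $\delta$ and the minimality of $\iota$, respectively, while the conditional halves invoke Lemma \ref{equalizer}(b) and (c) using the fact that $\delta$ and $\iota$ are both hereditary and ideal. Nothing is missing; your explicit remark about the asymmetry in which inequality does the work is precisely the content of the paper's argument.
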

\begin{proof} 
The stability under taking finite products is given by Lemma \ref{equalizer}(a), while the stability under isomorphisms is obvious. 

\smallskip
(a) Let $G\in\mathcal E(\mathcal T,\delta)$ and let $H$ be a subgroup of $G$. Then $\mathcal T_H\geq\mathcal T_G\restriction_H$ by Lemma \ref{functorial-basic}(c). Since $\mathcal T_G\restriction_H=\delta_G\restriction_H=\delta_H$, we can conclude that $\mathcal T_H=\delta_H$, and hence $H\in\mathcal E(\mathcal T,\delta)$. If $\mathcal T$ is ideal, then $\mathcal E(\mathcal T,\delta)$ is stable under taking quotients by Lemma \ref{equalizer}(c).

\smallskip
(b) Let $G\in\mathcal E(\mathcal T,\iota)$ and let $H$ be a subgroup of $G$. Then $\mathcal T_{G/H}\leq (\mathcal T_G)_q$ by Lemma \ref{functorial-basic}(d). Since $(\mathcal T_G)_q=(\iota_G)_q=\iota_H$, we have $\mathcal T_{G/H}=\iota_{G/H}$, that is $G/H\in\mathcal E(\mathcal T,\iota)$. If $\mathcal T$ is hereditary, then $\mathcal E(\mathcal T,\iota)$ is stable under taking subgroups by Lemma \ref{equalizer}(b).
\end{proof}

\begin{example}\label{ex}
\begin{itemize}
\item[(a)] $\mathcal E(\delta,\iota)=\mathcal E(\varrho,\iota)=\mathcal E(\Gs,\iota) $ is the singleton class consisting of the trivial group $\{0\}$; 
\item[(b)] $\mathcal E(\nu,\delta)=\{\text{bounded abelian groups}\}$;
\item[(c)] $\mathcal E(\gamma,\delta)=\mathcal E(\mathcal P,\delta)= \{\text{finite abelian groups}\}$;
\item[(d)] $\mathcal E(\nu,\iota)=\mathcal E(\gamma,\iota)=\{\text{divisible abelian groups}\}$;
\item[(e)] $\mathcal E(\Gs,\delta)=\mathcal E(\varrho,\delta)=\{\text{countable abelian groups}\}$;
\item[(f)]  $\mathcal E(\nu^p,\delta)=\{\text{bounded abelian $p$-groups}\}$;
\item[(g)] $\mathcal E(\gamma^p,\delta)=\mathcal E(\mathcal P^p,\delta)= \{\text{finite abelian $p$-groups}\}$;
\item[(i)] $\mathcal E(\nu^p,\iota)=\mathcal E(\gamma^p,\iota)=\{\text{$p$-divisible abelian groups}\}$.
\item[(j)] $\mathcal E(\mathcal P^p,\iota)=\{\text{torsion abelian groups without non-trivial $p$-torsion elements}\}$.
\end{itemize}
\end{example}

 A careful look at the examples reveals the following connection between topologies and classes of abelian groups. 
Given the class $\mathcal B$ of all bounded abelian groups, one can obtain the natural topology $\nu_G$ of an abelian group $G$ as the group topology on 
$G$ having as base of the neighbourhoods of $0$ the family $\{N\leq G:G/N\in\mathcal B\}$.
Analogously, if $\mathcal F$ is the class of all finite abelian groups, the profinite topology $\gamma_G$ of any abelian group $G$ is the group topology on $G$ which has as a base of the neighbourhoods of $0$ the family $\{N\leq G:G/N\in\mathcal F\}$.

This remark is generalized  in the next theorem. According to Corollary \ref{NewCoro}, the properties required for the class $\mathcal C$ are necessary.  

\begin{theorem}\label{tauC}
Let $\mathcal C$ be a discrete class. 
\begin{itemize}
\item[(a)]The family $\mathcal C_G=\{N\leq G:G/N\in\mathcal C\}$ is a base of the neighborhoods of $0$ of a linear group topology $\mathcal T_G^\mathcal C$ on $G$.
\item[(b)] Moreover, $\mathcal T^\mathcal C$ is a linear hereditary functorial topology with $\mathcal E(\mathcal T^\mathcal C,\delta)=\mathcal C$; and
\item[(c)] $\mathcal C$ is an ideal discrete class if and only if $\mathcal T^\mathcal C$ is ideal.
\end{itemize}
\end{theorem}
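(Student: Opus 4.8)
The plan is to establish Theorem \ref{tauC} by verifying each of the three parts in turn, relying on the defining stability properties of a discrete class $\mathcal C$ (closure under isomorphisms, finite products, and subgroups).

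For part (a), I would check that $\mathcal C_G=\{N\leq G:G/N\in\mathcal C\}$ satisfies the standard axioms for a base of neighborhoods of $0$ of a linear group topology; since each member is a subgroup, the only nontrivial axiom is that the family is downward directed under intersection. Given $N_1,N_2\in\mathcal C_G$, the quotient $G/(N_1\cap N_2)$ embeds into $G/N_1\times G/N_2$ via the diagonal map, and the image is a subgroup of a finite product of members of $\mathcal C$; using that $\mathcal C$ is stable under finite products and subgroups, I conclude $G/(N_1\cap N_2)\in\mathcal C$, so $N_1\cap N_2\in\mathcal C_G$. Since the $N\in\mathcal C_G$ are subgroups, the resulting topology $\mathcal T^{\mathcal C}_G$ is automatically linear.

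For part (b), functoriality requires that every homomorphism $\phi:G\to H$ be continuous, i.e. that for each $M\in\mathcal C_H$ the preimage $\phi^{-1}(M)$ lies in $\mathcal C_G$. This follows because $G/\phi^{-1}(M)$ is isomorphic to a subgroup of $H/M\in\mathcal C$ (via the map induced by $\phi$), and $\mathcal C$ is closed under subgroups and isomorphisms. To show $\mathcal T^{\mathcal C}$ is hereditary, I would verify that for a subgroup $H\leq G$ the induced topology $\mathcal T^{\mathcal C}_G\restriction_H$ agrees with $\mathcal T^{\mathcal C}_H$: the inequality $\mathcal T^{\mathcal C}_H\geq\mathcal T^{\mathcal C}_G\restriction_H$ is automatic by functoriality (Lemma \ref{functorial-basic}(c)), and for the reverse, given $K\in\mathcal C_H$ one checks that $K=H\cap N$ for a suitable $N\in\mathcal C_G$, or more directly that $H/K\in\mathcal C$ makes $K$ a basic $\mathcal T^{\mathcal C}_G\restriction_H$-neighborhood using that subgroups of $G$ with quotient in $\mathcal C$ restrict correctly. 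Finally, the equality $\mathcal E(\mathcal T^{\mathcal C},\delta)=\mathcal C$ amounts to: $\mathcal T^{\mathcal C}_G=\delta_G$ iff $\{0\}\in\mathcal C_G$ iff $G=G/\{0\}\in\mathcal C$.

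For part (c), I would show $\mathcal C$ is an ideal discrete class iff $\mathcal T^{\mathcal C}$ is ideal. By Lemma \ref{functorial-basic}(d), idealness of $\mathcal T^{\mathcal C}$ is equivalent to $(\mathcal T^{\mathcal C}_G)_q=\mathcal T^{\mathcal C}_{G/H}$ for every $G$ and every $H\leq G$; unpacking the quotient topology, a basic neighborhood in $G/H$ comes from some $N\in\mathcal C_G$ containing $H$, and one identifies $(G/H)/(N/H)\cong G/N\in\mathcal C$, so the correspondence between the two topologies hinges exactly on whether quotients of members of $\mathcal C$ remain in $\mathcal C$. The forward direction (idealness $\Rightarrow$ closure under quotients) I expect to be the delicate step: I would take $G\in\mathcal C$, so $\mathcal T^{\mathcal C}_G=\delta_G$, and for a quotient $G/H$ use idealness to deduce $\mathcal T^{\mathcal C}_{G/H}=(\delta_G)_q=\delta_{G/H}$, whence $G/H\in\mathcal C$ by the characterization of the equalizer from part (b). The converse is routine: if $\mathcal C$ is closed under quotients, then for every $N\in\mathcal C_G$ with $H\leq N$ the group $G/N$ and hence $(G/H)/(N/H)$ lies in $\mathcal C$, giving the required equality of topologies via Lemma \ref{functorial-basic}(d). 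The main obstacle will be cleanly matching the basic neighborhoods of the quotient topology $(\mathcal T^{\mathcal C}_G)_q$ with those of $\mathcal T^{\mathcal C}_{G/H}$, keeping track of which subgroups $N$ contain $H$, but this is bookkeeping once the correspondence $N\leftrightarrow N/H$ is set up.
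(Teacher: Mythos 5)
Your treatment of (a), of functoriality and of the equalizer computation in (b), and of (c) coincides with the paper's own proof: the diagonal embedding $G/(N_1\cap N_2)\hookrightarrow G/N_1\times G/N_2$ for (a); the observation that $G/\phi^{-1}(M)$ is isomorphic to a subgroup of $H/M$ for continuity; the equivalence $\mathcal T^{\mathcal C}_G=\delta_G \Leftrightarrow \{0\}\in\mathcal C_G \Leftrightarrow G\in\mathcal C$; and, in (c), the identification of $(G/H)/\pi(N)$ with a quotient of $G/N$ for the ``if'' direction. For the ``only if'' direction of (c) the paper invokes Lemma \ref{equalizer}(c), whereas you rerun the underlying argument directly (for $G\in\mathcal C$ the topology is discrete, idealness forces the quotient topology on $G/H$ to be discrete, hence $G/H\in\mathcal E(\mathcal T^{\mathcal C},\delta)=\mathcal C$); these are the same reasoning.

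The genuine problem is the hereditariness step in (b). You correctly isolate the nontrivial inclusion $\mathcal T^{\mathcal C}_H\leq\mathcal T^{\mathcal C}_G\restriction_H$, but your proposed verification --- ``given $K\in\mathcal C_H$ one checks that $K=H\cap N$ for a suitable $N\in\mathcal C_G$'' --- cannot be carried out, even in the weaker form $H\cap N\subseteq K$. Take $\mathcal C$ to be the class of finite abelian groups (a discrete class, in fact an ideal one), so that $\mathcal T^{\mathcal C}=\gamma$, and take $G=\Q$, $H=\Z$, $K=2\Z\in\mathcal C_H$. Since $\Q$ is divisible, it has no proper finite-index subgroups, so $\mathcal C_\Q=\{\Q\}$ and the only trace of a member of $\mathcal C_\Q$ on $\Z$ is $\Z$ itself, which is not contained in $2\Z$. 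Hence $\gamma_\Q\restriction_\Z$ is indiscrete while $\gamma_\Z$ is not; this is exactly the failure of hereditariness that the paper itself records in Example \ref{QZ}(c). So no argument can close this gap: the word ``hereditary'' in statement (b) is false as stated. Be aware that the paper's own proof is defective at the very same point: it verifies only that $H\cap N\in\mathcal C_H$ for every $N\in\mathcal C_G$, which is the automatic inclusion $\mathcal T^{\mathcal C}_G\restriction_H\leq\mathcal T^{\mathcal C}_H$ already contained in Lemma \ref{functorial-basic}(c), and then asserts equality of the two topologies. The correct repair is to delete ``hereditary'' from (b); everything else in the theorem, and in your proposal, stands.
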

\begin{proof}
(a) Let $N_1,N_2\in \mathcal C_G$. Then $G/N_1,G/N_2\in\mathcal C$. Since $\mathcal C$ is stable under taking finite products, $G/N_1\times G/N_2\in\mathcal C$. Consider now the embedding $G/N_1\cap N_2\hookrightarrow G/N_1\times G/N_2$. Since $\mathcal C$ is stable under taking subgroups, $G/N_1\cap N_2\in \mathcal C$, and hence $N_1\cap N_2\in C$. This proves that $\mathcal C_G$ is a local base of a group topology $\mathcal T^\mathcal C_G$ on $G$. Clearly, $\mathcal T^\mathcal C_G$ is linear.

\smallskip
(b) To verify that $\mathcal T^\mathcal C$ is a functorial topology, consider a homomorphism $\phi:(G,\mathcal T^\mathcal C_G)\to (G',\mathcal T^\mathcal C_{G'})$ of abelian groups. Then $\phi$ is continuous; in fact, if $N'\in \mathcal C_{G'}$, then $G'/N'\in\mathcal C$. For $N=\phi^{-1}(N')$ we have $G/N\cong \phi(G)/\phi(N)=\phi(G)/N'\cap\phi(G)\cong \phi(G)+N'/N'$. Since $\mathcal C$ is stable under taking subgroups, $G/N\in\mathcal C$ and so $N\in \mathcal C_G$. The equality  $\mathcal E(\mathcal T^\mathcal C,\delta)=\mathcal C$
follows from the definitions. 

We show now that $\mathcal T^\mathcal C$ is hereditary. To this end let $G$ be an abelian group and $H$ a subgroup of $G$. Let $N\in \mathcal C_G$, that is, $G/N\in\mathcal C$. We have $H/H\cap N\cong H+N/N\leq G/N$. Since $\mathcal C$ is stable under taking subgroups, $H/H\cap N\in \mathcal C$, that is, $H\cap N\in \mathcal C_H$. This proves that $\mathcal T^\mathcal C_G\restriction_H=\mathcal T^\mathcal C_H$, i.e., $\mathcal T^\mathcal C$ is hereditary.

\smallskip
(c) Assume now that $\mathcal C$ is stable under taking quotients. Let $G$ be an abelian group, $H$ a subgroup of $G$ and consider the canonical projection $\pi:G\to G/H$. Let $N\in \mathcal C_G$, that is, $G/N\in\mathcal C$. We have that $(G/H)/\pi(N)=(G/H)/(N+H/H)\cong G/N+H$. Since $\mathcal C$ is stable under taking quotients, $(G/H)/\pi(N)\in\mathcal C$, i.e., $\pi(N)\in \mathcal C_{G/H}$. This proves that $(\mathcal T^\mathcal C_G)_q=\mathcal T^\mathcal C_{G/H}$, hence $\mathcal T^\mathcal C$ is ideal.

Now suppose that  $\mathcal T^\mathcal C$ is ideal. Then  $\mathcal C=\mathcal E(\mathcal T^\mathcal C,\delta)$ is stable under taking quotients by Lemma \ref{equalizer}(c). 
\end{proof}

This procedure is standard in the field of functorial topologies, and it completely describes a large class of functorial topologies 
(for further details see \cite[Theorems 2.5 and 2.6]{BM}).

\medskip
In the next example we exhibit a  linear functorial topology that is not  ideal. 

\begin{example} 
Consider the class $\mathcal C=\{\Z^n: n\in \N\}$. More precisely, let ${\mathcal C}$ be the class of all finite-rank free abelian groups. 
Obviously, ${\mathcal C}$ is stable under isomorphic groups, finite products and subgroups. But ${\mathcal C}$ is not stable under taking quotients. By Theorem \ref{tauC}, $\mathcal T^\mathcal C$ is a linear hereditary functorial topology which is not ideal.
\end{example}

\subsection{Subcategories of $\Ab$ as inverse images via functorial topologies}

We showed above that many subcategories of $\Ab$ arise as equalizers of two functorial topologies. Here we consider other ways to generate subcategories of $\Ab$ using functorial topologies (e.g., via inverse images). To start with, let us note that the classes $\mathcal E(\mathcal T,\delta)\ \text{and}\ \mathcal E(\mathcal T,\iota)$ can be obtained also in a different way. Namely, let $\mathcal A$ be a class of topological abelian groups and let 
$$
\mathcal A(\mathcal T)=\{G\in\Ab:(G,\mathcal T_G)\in \mathcal A\}.
$$
In other words, $\mathcal A(\mathcal T)$ is the inverse image of the subcategory $\mathcal A$ along the functor $\mathcal T: \Ab \to \TopAb$, hence it is natural to expect that nice stability properties of $\mathcal A$ will entail nice stability properties of $\mathcal A(\mathcal T)$:  
 
\begin{lemma} 
Let $\mathcal T$ be a functorial topology and let  $\mathcal A$ be a 
class of topological abelian groups stable under taking products, subgroups and finer group topologies. Then:
\begin{itemize}
\item[(a)] if $G_i\in\mathcal A(\mathcal T)$ for every $i\in I$, then $G=\prod_{i\in I} G_i\in\mathcal A(\mathcal T)$;
\item[(b)] if $G\in\mathcal A(\mathcal T)$, then $H\in\mathcal A(\mathcal T)$ for every subgroup $H$ of $G$.
\end{itemize}
\end{lemma}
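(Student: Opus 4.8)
The plan is to reduce both items to the corresponding properties of the functor $\mathcal T$ composed with the given stability assumptions on $\mathcal A$, using the basic relations from Lemma \ref{functorial-basic}. For item (a), I would start from the product $G=\prod_{i\in I}G_i$ and the fact (Lemma \ref{functorial-basic}(b)) that $\mathcal T_G\geq\prod_{i\in I}\mathcal T_{G_i}$. By hypothesis each $(G_i,\mathcal T_{G_i})\in\mathcal A$, so since $\mathcal A$ is stable under products, the topological group $\prod_{i\in I}(G_i,\mathcal T_{G_i})=(G,\prod_{i\in I}\mathcal T_{G_i})$ lies in $\mathcal A$. Now $(G,\mathcal T_G)$ carries a topology \emph{finer} than $\prod_{i\in I}\mathcal T_{G_i}$ on the \emph{same} underlying group, so stability of $\mathcal A$ under finer group topologies yields $(G,\mathcal T_G)\in\mathcal A$, i.e.\ $G\in\mathcal A(\mathcal T)$.

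For item (b), I would let $H$ be a subgroup of $G$ with $G\in\mathcal A(\mathcal T)$, so $(G,\mathcal T_G)\in\mathcal A$. Since $\mathcal A$ is stable under subgroups, the subspace $(H,\mathcal T_G\restriction_H)\in\mathcal A$. By Lemma \ref{functorial-basic}(c), $\mathcal T_H\geq\mathcal T_G\restriction_H$, so $(H,\mathcal T_H)$ is again the same underlying group equipped with a finer group topology than one already in $\mathcal A$; stability under finer group topologies gives $(H,\mathcal T_H)\in\mathcal A$, that is $H\in\mathcal A(\mathcal T)$.

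The only delicate point — and the step I would flag as the heart of the argument — is the role of the hypothesis that $\mathcal A$ is stable under passage to \emph{finer group topologies}. In both parts, the inequalities from Lemma \ref{functorial-basic} go in the ``wrong'' direction for a naive application of product- or subgroup-stability: the functorial topology $\mathcal T_G$ need not coincide with the product topology $\prod_i\mathcal T_{G_i}$, nor $\mathcal T_H$ with the subspace topology $\mathcal T_G\restriction_H$ (equality would require $\mathcal T$ to be, respectively, compatible with products or hereditary, which is not assumed). It is precisely the finer-topology stability that bridges this gap, converting the membership of the coarser comparison group into membership of the actual $\mathcal T$-group. I would therefore make sure to state explicitly at each step that the underlying abstract group is unchanged and only the topology is being refined, so that the three stability hypotheses on $\mathcal A$ are invoked cleanly and in the correct order.
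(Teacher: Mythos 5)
Your proposal is correct and follows essentially the same route as the paper: both parts combine the inequalities $\mathcal T_G\geq\prod_{i\in I}\mathcal T_{G_i}$ and $\mathcal T_H\geq\mathcal T_G\restriction_H$ from Lemma \ref{functorial-basic} with the product/subgroup stability of $\mathcal A$ and then close the gap via stability under finer group topologies. Your write-up merely makes explicit the final finer-topology step, which the paper's terser proof leaves implicit in the phrases ``it follows that'' and ``we deduce that''.
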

\begin{proof}
(a) Since $\mathcal T_G\geq\prod_{i\in I}\mathcal T_{G_i}$ by Lemma \ref{functorial-basic}(b), and since $(G, \prod_{i\in I}\mathcal T_{G_i})\in \mathcal A$ according to the hypothesis, it follows that $(G,\mathcal T_G) \in \mathcal A$.

\smallskip
(b) Since $\mathcal T_H\geq \mathcal T_G\restriction_H$ by Lemma \ref{functorial-basic}(c), we deduce that $(H, \mathcal T_G\restriction_H) \in  \mathcal A$ in view of the hypothesis. 
\end{proof}

Moreover, this construction produces a reflection functor $A_\mathcal T:\Ab\to \mathcal A(\mathcal T)$ associating to an abelian group $G$, its biggest quotient $G/H$ in $\mathcal A(\mathcal T)$ (i.e., such that $(G/H,\mathcal T_{G/H})\in\mathcal A$).

\medskip
We can apply this approach for example for
\begin{itemize}
\item[(a)] $\mathcal A= \mathcal H$ the class of Hausdorff topological groups, so $\mathcal H(\mathcal T)=\{G\in\Ab:\mathcal T_G\ \text{is Hausdorff}\}$;
\item[(b)] $\mathcal A = \Delta$ the class of all discrete groups, so  $\Delta (\mathcal T) = \mathcal E(\mathcal T,\delta)$; 
\item[(c)] $\mathcal A = \mathfrak I$ the class of all indiscrete groups, so $\mathfrak I(\mathcal T) = \mathcal E(\mathcal T,\iota)$. 
\end{itemize}

\begin{example}\label{laaaaaast:ex}
 Here we consider examples concerning only $\nu$.
\begin{itemize}
\item[(a)] For the class $\mathfrak T$ of all totally bounded abelian groups, $\mathfrak T(\nu) = \mathcal N$.
\item[(b)] For the class  $\mathfrak C$ (respectively,  $\mathfrak L$) of all compact (respectively, locally compact) abelian groups, Orsatti \cite{O} characterized the classes $\mathfrak C(\nu) $ and $ \mathfrak L(\nu)$.
\end{itemize}
\end{example}


In the case $\mathcal A=\mathcal H$ we give the following

\begin{example}
For $\mathcal A=\mathcal H$, we have:
\begin{itemize}
\item[(a)]$\mathcal H(\mathcal P)=\mathcal H(\varrho)=\mathcal H(\Gs)=\Ab$;
\item[(b)]$\mathcal H(\gamma)=\mathcal H(\nu)=\{\text{residually finite abelian groups}\}$ by Lemma \ref{Haus};
\item[(c)]$\mathcal H(\gamma^p)=\mathcal H(\nu^p)=\{\text{residually $p$-finite abelian groups}\}$ by Remark \ref{p-}.
\end{itemize}
In this case the reflection functor $H_\mathcal T:\Ab\to \mathcal H(\mathcal T)$ associates to an abelian group $G$, its biggest quotient $G/H$ such that $(G/H,\mathcal T_{G/H})$ is Hausdorff.
For example,
\begin{itemize}
\item[(a$'$)] $H_\mathcal P(G)=G$;
\item[(b$'$)] $H_\gamma=H_\nu=G/G^1$;
\item[(c$'$)] $H_{\gamma^p}=H_{\nu^p}=G/G^1_p$.
\end{itemize}
\end{example}

\section{Open problems and final remarks}

The next question should be compared with item (a) of Remark \ref{vD}. 

\begin{question}\label{Ques1} Let $G = \bigoplus_{\omega_1} \Z(2)$ and $H= \bigoplus_{\omega_1} \Z(3)$. 
Are $(G,\Gs_G)$ and  $(H,\Gs_H)$ homeomorphic as topological spaces? What about $(G,\varrho_G)$ and  $(H,\varrho_H)$?
\end{question}

\begin{problem}\label{vD_problem}
Find a sufficient condition for a functorial topology $\mathcal T$ and a pair of infinite abelian groups $G,H$, so that whenever 
 $f: (G,\mathcal T_G)\to (H,\mathcal T_H)$ is a homeomorphism with $f(0)=0$ there exist a {\em homomorphism} $\phi: G \to H$ and an infinite subset $A$ of $G$ containing $0$, such that $f\restriction_A = \phi\restriction_A$. 
\end{problem}

According to item (a) of Remark \ref{vD}, this is the case of the functorial topologies $\gamma$ and $\mathcal P$ when $G$ and $H$ are bounded abelian groups (see \cite{dLD} for more details). 
Note that if the pair $G,H$ from Question  \ref{Ques1} has the ``linearization property" described above with respect to $\Gs$ or $\varrho$, we obtain a negative answer to Question  \ref{Ques1}. 
  
\begin{question}
Is it true that for every functorial topology $\mathcal T$ every infinite abelian group $G$ such that $(G,\mathcal T_G)$ is Hausdorff is necessarily zero-dimensional? 
\end{question}
  
The answer is obviously ``Yes" for all linear topologies. Moreover, it is ``Yes" for $\mathcal T= \mathcal P$ and this follows from results due to van Douwen \cite{vD} and Shakhmatov \cite{Sh}.

\medskip
The next problem concerns compactness-like properties of the functorial topologies. The weakest possible one, namely total boundedness is present in the case of the Bohr topology $\mathcal P$. Nevertheless, it is known that $(G,\mathcal P_G)$ is pseudocompact  for no infinite abelian group $G$ \cite{CS}. 
 Motivated by this fact and item (b) of Example \ref{laaaaaast:ex}, we propose the following general question in the line of Example \ref{laaaaaast:ex}(b): 

\begin{problem}
Study the class $\mathcal A(\mathcal T)$ for a functorial topology $\mathcal T$ when $\mathcal A$ is a class of compact-like abelian groups. 
\end{problem}

\begin{question}
Do there exist a functorial topology $\mathcal T$ and an infinite abelian group $G$ such that $(G,\mathcal T_G)$ is Hausdorff and $|G|>  2^{d(G,\mathcal T_G)}$? 
\end{question}

Let us recall that there exist Hausdorff topological groups with $|G|=  2^{2^{d(G)}}$. 

\medskip
Motivated by item (e) of Example \ref{ex}, one can ask whether the pair $\Gs$, $\varrho$ has an analogous behavior as the pair $\mathcal P$, $\gamma$. So we leave open the following

\begin{problem}
Describe the precise relation between $\Gs$ and $\varrho$, and compute $\mathcal E(\Gs,\varrho)$. 
\end{problem}
 
According to Lemma \ref{PG}, one has 
\begin{equation}\label{Bohr_ci}
|d(G,\mathcal P_G)| = |G|\mbox{ and }|w(G,\mathcal P_G)| = 2^{|G|}
\end{equation}
for every abelian group $G$. These relations depend only on the cardinality of the abelian group $G$. Theorems \ref{WEIGHT} and \ref{density} show that things change for the profinite topology, where the algebraic structure of  the group starts to have some impact 
through the following condition, relevant for the computation of $d(G,\nu_G)$: 
\begin{equation}\label{no-Top}
\mbox{ the set of cardinals } \{|G/pG|: p\in P\}\mbox{ has no top element}.
\end{equation}
Obviously, (\ref{no-Top}) implies that  $d(G,\nu_G)$ is a limit cardinal with $\mathrm{cf}(d(G,\nu_G))=\omega$.  
When (\ref{no-Top}) fails, then $d(G,\nu_G)= |G/pG|$ for some prime $p$, and consequently $w(G,\nu_G)=2^{d(G,\nu_G)}$ by Theorems \ref{WEIGHT} and \ref{density}. 
Since this may occur independently on the cofinality of $|G/pG|$ (that may also be countable), this shows that $\mathrm{cf}(d(G,\nu_G))=\omega$ does not imply (\ref{no-Top}).
Therefore,  $w(G,\gamma_G)= 2^{d(G,\gamma_G)}$ may occur quite often (for example, when (\ref{no-Top}) fails, but not only in that case). 
To give a more precise account on this, let us recall that for a limit cardinal $\lambda$ one puts $2^{<\lambda} =\sup\{2^\mu: \mu < \lambda\}$. 
In these terms, $w(G,\gamma_G)=2^{<d(G,\nu_G)}$ when (\ref{no-Top}) holds (so $d(G,\nu_G)$ is a limit cardinal with $\mathrm{cf}(d(G,\nu_G))= \omega$).
Since the cardinal function $2^{<\lambda}$ is strongly dependent on the cardinal arithmetics, this leaves open the following general problem: 

\begin{problem}
Describe the precise relation between the cardinals $|G|$, $w(G,\gamma_G)$ and ${d(G,\gamma_G)}$ for an abelian group $G$.
\end{problem}

To give a more precise form of this problem, one can present it as a ``realization problem": 

\begin{problem}
Characterize all triples of infinite cardinals $(\lambda, \kappa, \mu)$ such that $\lambda=d(G,\gamma_G)$,  $\kappa=w(G,\gamma_G)$ and $\mu = |G|$ for some abelian group $G$.
\end{problem}

 Obviously, one has to impose $\lambda \leq \mu \leq 2^\lambda$ and $\lambda \leq \kappa \leq 2^\lambda$ on the triples $(\lambda, \kappa, \mu)$. 

As a first step, one may consider the problem of realization of single cardinals. While every infinite cardinal $\lambda$ can be of the form $d(G,\gamma_G)$ for some abelian group $G$ of size $\lambda$ (take the free abelian group $G$ of size $\lambda$, then $d(G,\gamma_G)= \lambda$ and $w(G,\gamma_G)=2^{\lambda}$), a successor non-exponential  
cardinal cannot be realized as $w(G,\gamma_G)$, according to Theorem \ref{WEIGHT}. 

Next comes the realization of a pair of infinite cardinals $(\lambda, \kappa)$ as a pair $(d(G,\gamma_G), w(G,\gamma_G))$ for some abelian group $G$. 
As already seen above, all pairs $(\lambda, 2^\lambda)$ with an arbitrary infinite cardinal $\lambda$ are realizable.  The ``antipodal" condition $\kappa=\lambda$ is discussed in the next remark (the equality $w(G,\gamma_G)=d(G,\gamma_G)$ should be compared with Question \ref{Bohr_ci}).  

\begin{remark}
 Recall that a cardinal $\kappa$ is a \emph{strong limit cardinal} if $2^\lambda<\kappa$ for every cardinal $\lambda<\kappa$; obviously, under the assumption of the Generalized Continuum Hypothesis (GCH), all limit cardinals are strong limit cardinals. If $\kappa$ is a limit cardinal, then $\kappa$ is a strong limit cardinal if and only if $\kappa= 2^{<\lambda}$. 
\begin{itemize}
\item[(a)] One can show that $w(G,\gamma_G)$ is a strong limit cardinal  for a residually finite abelian group $G$ if and only if $w(G,\gamma_G)=d(G,\gamma_G)$. In such a case its cofinality is countable. Similarly, if (\ref{no-Top}) holds true, then $d(G,\gamma_G)$  is an uncountable strong limit cardinal if and only if $w(G,\gamma_G)=d(G,\gamma_G)$.
In particular, no strong limit cardinal $\kappa$ of uncountable cofinality can be realized as $w(G,\gamma_G)$.
\item[(b)] One can realize the pair $(\kappa, \kappa)$ for every  strong limit cardinal $\kappa$ of countable cofinality.
\end{itemize}
\end{remark}

\medskip
We end this section and the paper noting, as suggested by the referee, that the notion of functorial topology need not be confined to discrete abelian groups. 
%
Indeed, a functorial topology on $\TopAb$ can be defined as a functor $\mathcal T:\TopAb\to\TopAb$ such that $U\mathcal T=U$.

In other words, a functorial topology on $\TopAb$ is a functor $\mathcal T:\TopAb\to\TopAb$ such that $\mathcal T(G,\tau)=(G,\mathcal T_{(G,\tau)})$ for any $(G,\tau)\in\TopAb$, where $\mathcal T_{(G,\tau)}$ denotes the topology on $G$, and $\mathcal T(\phi)=\phi$ for any continuous homomorphism $\phi$ in $\TopAb$.

\begin{example}
\begin{itemize}
\item[(a)] If $(G,\tau)$ is a topological abelian group, the \emph{Bohr modification} of $\tau$ is the topology $\tau^+=\sup\{\tau':\tau'\leq\tau,\ \tau'\ \text{totally bounded}\}$; this topology is the finest totally bounded group topology on $G$ coarser than $\tau$. Actually, $\tau^+=\inf\{\tau,\mathcal P_G\}$.  
So the functor $\mathcal P:\TopAb\to \TopAb$, defined on the objects of $\TopAb$ by $\mathcal P(G,\tau)=(G,\tau^+)$, is a functorial topology. Since $\delta_G^+=\mathcal P_G$, this functorial topology extends the functor of the Bohr topology from $\Ab$ to $\TopAb$.
\item[(b)] For a topological group $(G,\tau)$, let $\tau_\gamma$ be the group topology on $G$ having as a local base the $\tau$-open finite-index subgroups of $G$. So the functor $\gamma:\TopAb\to \TopAb$, defined on the objects of $\TopAb$ by $\gamma(G,\tau)=(G,\tau_\gamma)$, is a functorial topology. Since $(\delta_G)_\gamma=\gamma_G$, this functorial topology extends the functor of the profinite topology from $\Ab$ to $\TopAb$.
\end{itemize}
\end{example} 

Following the verifications in Section \ref{profinite-bohr}, it is possible to prove that the inequality $\gamma\leq\mathcal P$ still holds in this more general setting.

On the other hand, for a topological group $(G,\tau)$, let $\tau_\nu$ be the group topology on $G$ having as a local base the $\tau$-open subgroups of $G$ of the form $mG$ for some $m\in\N_+$. Note that $(\delta_G)_\nu=\nu_G$. The next example shows that there exists a topological abelian group $(G,\tau)$ such that the inequality $\tau_\gamma\leq\tau_\nu$ fails.

\begin{example}
Let $p$ be a prime, $G=\Z(p)^\omega$ and let $\tau$ be the product topology on $G$. Then $\tau_\gamma=\tau$, while $\tau_\nu=\iota_G$.
\end{example}

Moreover, the following example shows that the map $\nu:\TopAb\to \TopAb$, defined on the objects of $\TopAb$ by $\nu(G,\tau)=(G,\tau_\nu)$ is not a functorial topology, as it does not send continuous homomorphisms to continuous homomorphisms. 

\begin{example}
Let $p$ be a prime, $G=\bigoplus_\N \Z(p)$ and $H=\bigoplus_{2\N}\Z(p)\oplus\bigoplus_{2\N+1}\Z(p^2)$, so that $G\subseteq H$. Moreover, let $N=0\oplus\bigoplus_{2\N+1}\Z(p)\subseteq G$ and note that $N=pH$. Let $\tau$ be a non-discrete group topology on $G$ such that $N$ is $\tau$-open in $G$. Let $\sigma$ be the group topology on $H$ defined imposing that the embedding $i:(G,\tau)\to (H,\sigma)$ is continuous and open, so that $N$ is $\sigma$-open in $H$. We verify that $i:(G,\tau_\nu)\to (H,\sigma_\nu)$ is not continuous. In fact, $\tau_\nu$ is indiscrete as $\tau$ is not discrete and $m G$ is either $0$ or $G$ for every $m\in\N_+$, while $pH=N$ is a non-trivial proper subgroup of $G$ and it is $\sigma_\nu$-open.
\end{example}

Finally, analogous considerations can be done about the $p$-Bohr topology, the pro-$p$-topology and the $p$-adic topology.

\subsection*{Acknowledgements}

It is a pleasure to thank the referee for her/his unusually careful and competent reading, for the numerous useful comments and for the constructive criticism.

\end{document}